\DeclareSymbolFont{bbold}{U}{bbold}{m}{n}
\DeclareSymbolFontAlphabet{\mathbbold}{bbold}
\DeclareMathOperator{\fin}{fin.}
\DeclareMathOperator{\pot}{pot.}
\DeclareMathOperator{\pr}{pr}
\DeclareMathOperator{\Inn}{Inn}
\DeclareMathOperator{\red}{red}
\DeclareMathOperator{\Div}{div}
\DeclareMathOperator{\ab}{ab}
\DeclareMathOperator{\can}{can}
\DeclareMathOperator{\dic}{Dic}
\DeclareMathOperator{\nonab}{non-ab}
\DeclareMathOperator{\maxm}{max}
\DeclareMathOperator{\genus}{genus}
\DeclareMathOperator{\ns}{ns}
\DeclareMathOperator{\s}{s}
\DeclareMathOperator{\aut}{Aut}
\DeclareMathOperator{\tors}{tors}
\DeclareMathOperator{\GL}{GL}
\DeclareMathOperator{\SL}{SL}
\DeclareMathOperator{\PSL}{PSL}
\DeclareMathOperator{\PGL}{PGL}
\DeclareMathOperator{\borel}{B}
\chardef\bslash=`\\ 
\begin{document}


\newtheorem{Theorem}{Theorem}[section]

\newtheorem{example}[Theorem]{Example}
\newtheorem{cor}[Theorem]{Corollary}
\newtheorem{goal}[Theorem]{Goal}

\newtheorem{Conjecture}[Theorem]{Conjecture}
\newtheorem{guess}[Theorem]{Guess}

\newtheorem{exercise}[Theorem]{Exercise}
\newtheorem{Question}[Theorem]{Question}
\newtheorem{lemma}[Theorem]{Lemma}
\newtheorem{property}[Theorem]{Property}
\newtheorem{proposition}[Theorem]{Proposition}
\newtheorem{ax}[Theorem]{Axiom}
\newtheorem{claim}[Theorem]{Claim}

\newtheorem{nTheorem}{Surjectivity Theorem}

\theoremstyle{definition}
\newtheorem{Definition}[Theorem]{Definition}
\newtheorem{problem}[Theorem]{Problem}
\newtheorem{question}[Theorem]{Question}
\newtheorem{Example}[Theorem]{Example}

\newtheorem{remark}[Theorem]{Remark}
\newtheorem{diagram}{Diagram}
\newtheorem{Remark}[Theorem]{Remark}
\newcommand{\diagref}[1]{diagram~\ref{#1}}
\newcommand{\thmref}[1]{Theorem~\ref{#1}}
\newcommand{\secref}[1]{Section~\ref{#1}}
\newcommand{\subsecref}[1]{Subsection~\ref{#1}}
\newcommand{\lemref}[1]{Lemma~\ref{#1}}
\newcommand{\corref}[1]{Corollary~\ref{#1}}
\newcommand{\exampref}[1]{Example~\ref{#1}}
\newcommand{\remarkref}[1]{Remark~\ref{#1}}
\newcommand{\corlref}[1]{Corollary~\ref{#1}}
\newcommand{\claimref}[1]{Claim~\ref{#1}}
\newcommand{\defnref}[1]{Definition~\ref{#1}}
\newcommand{\propref}[1]{Proposition~\ref{#1}}
\newcommand{\prref}[1]{Property~\ref{#1}}
\newcommand{\itemref}[1]{(\ref{#1})}
\newcommand{\ul}[1]{\underline{#1}}


\newcommand{\CE}{\mathcal{E}}
\newcommand{\CG}{\mathcal{G}}\newcommand{\CV}{\mathcal{V}}
\newcommand{\CL}{\mathcal{L}}
\newcommand{\CM}{\mathcal{M}}
\newcommand{\A}{\mathcal{A}}
\newcommand{\CO}{\mathcal{O}}
\newcommand{\B}{\mathcal{B}}
\newcommand{\CS}{\mathcal{S}}
\newcommand{\CX}{\mathcal{X}}
\newcommand{\CY}{\mathcal{Y}}
\newcommand{\CT}{\mathcal{T}}
\newcommand{\CW}{\mathcal{W}}
\newcommand{\CJ}{\mathcal{J}}

\newcommand\myeq{\mathrel{\stackrel{\makebox[0pt]{\mbox{\normalfont\tiny def}}}
{\Longleftrightarrow}}}
\newcommand{\st}{\sigma}
\renewcommand{\k}{\varkappa}
\newcommand{\Frac}{\mbox{Frac}}
\newcommand{\XC}{\mathcal{X}}
\newcommand{\wt}{\widetilde}
\newcommand{\wh}{\widehat}
\newcommand{\mk}{\medskip}
\renewcommand{\sectionmark}[1]{}
\renewcommand{\Im}{\operatorname{Im}}
\renewcommand{\Re}{\operatorname{Re}}
\newcommand{\la}{\langle}
\newcommand{\ra}{\rangle}
\newcommand{\LND}{\mbox{LND}}
\newcommand{\Pic}{\mbox{Pic}}
\newcommand{\lnd}{\mbox{lnd}}
\newcommand{\GLND}{\mbox{GLND}}\newcommand{\glnd}{\mbox{glnd}}
\newcommand{\Der}{\mbox{DER}}\newcommand{\DER}{\mbox{DER}}
\renewcommand{\th}{\theta}
\newcommand{\ve}{\varepsilon}
\newcommand{\1}{^{-1}}
\newcommand{\iy}{\infty}
\newcommand{\iintl}{\iint\limits}
\newcommand{\capl}{\operatornamewithlimits{\bigcap}\limits}
\newcommand{\cupl}{\operatornamewithlimits{\bigcup}\limits}
\newcommand{\suml}{\sum\limits}
\newcommand{\ord}{\operatorname{ord}}
\newcommand{\gal}{\operatorname{Gal}}
\newcommand{\bk}{\bigskip}
\newcommand{\fc}{\frac}
\newcommand{\g}{\gamma}
\newcommand{\be}{\beta}
\newcommand{\dl}{\delta}
\newcommand{\Dl}{\Delta}
\newcommand{\lm}{\lambda}
\newcommand{\Lm}{\Lambda}
\newcommand{\om}{\omega}
\newcommand{\ov}{\overline}
\newcommand{\vp}{\varphi}
\newcommand{\kap}{\varkappa}

\newcommand{\Vp}{\Phi}
\newcommand{\Varphi}{\Phi}
\newcommand{\BC}{\mathbb{C}}
\newcommand{\C}{\mathbb{C}}\newcommand{\BP}{\mathbb{P}}
\newcommand{\BQ}{\mathbb {Q}}
\newcommand{\BM}{\mathbb{M}}
\newcommand{\mbh}{\mathbb{H}}
\newcommand{\BR}{\mathbb{R}}\newcommand{\BN}{\mathbb{N}}
\newcommand{\BZ}{\mathbb{Z}}\newcommand{\BF}{\mathbb{F}}
\newcommand{\BA}{\mathbb {A}}
\renewcommand{\Im}{\operatorname{Im}}
\newcommand{\idd}{\operatorname{id}}
\newcommand{\ep}{\epsilon}
\newcommand{\tp}{\tilde\partial}
\newcommand{\doe}{\overset{\text{def}}{=}}
\newcommand{\supp} {\operatorname{supp}}
\newcommand{\loc} {\operatorname{loc}}
\newcommand{\de}{\partial}
\newcommand{\z}{\zeta}
\renewcommand{\a}{\alpha}
\newcommand{\G}{\Gamma}
\newcommand{\der}{\mbox{DER}}

\newcommand{\Spec}{\operatorname{Spec}}
\newcommand{\Sym}{\operatorname{Sym}}
\newcommand{\Aut}{\operatorname{Aut}}

\newcommand{\Idd}{\operatorname{Id}}

\newcommand{\tG}{\widetilde G}
\newcommand{\F}{\mathbb{F}}
\newcommand{\Q}{\mathbb{Q}}
\newcommand{\Z}{\mathbb{Z}}
\newcommand{\XG}{\mc{E}_{S_4}(5)}
\newcommand{\tB}{\text{B}}
\newcommand{\Gal}{\text{Gal}}
\newcommand{\cX}{\mathcal{X}}
\newcommand{\bP}{\mathbf{P}}
\newcommand{\FX}{\mathfrac {X}}
\newcommand{\FV}{\mathfrac {V}}
\newcommand{\SX}{\mathcal {X}}
\newcommand{\SV}{\mathcal {V}}
\newcommand{\SO}{\mathcal {O}}
\newcommand{\SD}{\mathcal {D}}
\newcommand{\Sr}{\rho}
\newcommand{\SR}{\mathcal {R}}
\newcommand{\cl}{\mathcal{C}}
\newcommand{\ok}{\mathcal{O}_K}

\setcounter{equation}{0} \setcounter{section}{0}

\newcommand{\ds}{\displaystyle}
\newcommand{\gl}{\lambda}
\newcommand{\gL}{\Lambda}
\newcommand{\gge}{\epsilon}
\newcommand{\gG}{\Gamma}
\newcommand{\ga}{\alpha}
\newcommand{\gb}{\beta}
\newcommand{\gd}{\delta}
\newcommand{\gD}{\Delta}
\newcommand{\gs}{\sigma}
\newcommand{\mbq}{\mathbb{Q}}
\newcommand{\mbr}{\mathbb{R}}
\newcommand{\mbz}{\mathbb{Z}}
\newcommand{\mbc}{\mathbb{C}}
\newcommand{\mbn}{\mathbb{N}}
\newcommand{\mbp}{\mathbb{P}}
\newcommand{\mbf}{\mathbb{F}}
\newcommand{\mbe}{\mathbb{E}}
\newcommand{\lcm}{\text{lcm}\,}
\newcommand{\mf}[1]{\mathfrak{#1}}
\newcommand{\ol}[1]{\overline{#1}}
\newcommand{\mc}[1]{\mathcal{#1}}
\newcommand{\nequiv}{\equiv\hspace{-.07in}/\;}
\newcommand{\bnequiv}{\equiv\hspace{-.13in}/\;}

\title{Elliptic curves with non-abelian entanglements}
\author{Nathan Jones and Ken McMurdy}


\date{}

\begin{abstract}
We consider the problem of classifying quadruples $(K,E,m_1,m_2)$ where $K$ is a number field, $E$ is an elliptic curve defined over $K$ and $(m_1,m_2)$ is a pair of relatively prime positive integers for which the intersection $K(E[m_1]) \cap K(E[m_2])$ is a non-abelian extension of $K$.  There is an infinite set $\mc{S}$ of modular curves whose $K$ rational points capture all elliptic curves over $K$ without complex multiplication that have this property.  Our main theorem explicitly describes the (finite) subset of $\mc{S}$ consisting of those modular curves having genus zero.  In the case $K = \mbq$, this has applications to the problem of determining when the Galois representation on the torsion of $E$ is as large as possible modulo a prescribed obstruction; we illustrate this application with a specific example.
\end{abstract}

\maketitle

\section{Introduction} \label{introduction}

Let $K$ be a field of characteristic zero and $E$ an elliptic curve over $K$.  For a positive integer $m$, let $E[m]$ denote the $m$-torsion of $E$ and 
\[
K(E[m]) := K \left( \left\{ x, y \in \ol{K} : (x,y) \in E[m] \right\} \right)
\]
the $m$-th division field of $E$ over $K$, obtained by adjoining to $K$ the $x$ and $y$ coordinates of the $m$-torsion of some (any) Weierstrass model of $E$.  The restriction of $\gal(K(E[m])/K)$ to $E[m]$ gives rise to an embedding
\[
\gal(K(E[m])/K) \hookrightarrow \aut(E[m]) \simeq \GL_2(\mbz/m\mbz),
\]
the latter isomorphism induced by the choice of a $\mbz/m\mbz$-basis for $E[m]$, which is a free $\mbz/m\mbz$-module of rank $2$.  It is of interest to understand the image of this embedding as $m$ varies over all positive integers, for $K$ and $E$ fixed.  In the present paper, we are focused on the case where $m$ has more than one distinct prime factor.  Writing $m = m_1m_2$ where $\gcd(m_1, m_2) = 1$ and each $m_i$ is greater than $1$, we have
\[
\gal(K(E[m])/K) \subseteq \gal(K(E[m_1])/K) \times \gal(K(E[m_2])/K) \subseteq \GL_2(\mbz/m_1\mbz) \times \GL_2(\mbz/m_2\mbz).
\]
By Galois theory, the first inclusion is proper if and only if $K(E[m_1]) \cap K(E[m_2]) \neq K$.  In particular, understanding $\gal(K(E[m])/K)$ amounts to understanding each of the groups $\gal(K(E[m_1])/K)$, $\gal(K(E[m_2])/K)$ as well as the \emph{entanglement} $K(E[m_1]) \cap K(E[m_2])$, and ``how it sits'' inside $K(E[m_1])$ and $K(E[m_2])$.  In recent years, there has been significant interest in both the nature of division fields $K(E[m])$ for composite level $m$ (see for instance \cite{rousezureickbrown}, \cite{sutherlandzywina} and \cite{morrow}) and the nature of entanglements (see \cite{danielsmorrow}, \cite{campagnapengo}, \cite{danielslozanorobledo} and \cite{cnlmpzb}).
In the present paper, we are concerned with the following problem.
\begin{Definition}
Let $E$ be an elliptic curve defined over a field $K$ and let $m \in \mbn$ be a positive integer that is divisible by at least two primes.  We call a factorization $m = m_1m_2$ of $m$ {\bf{permissible}} if the factors $m_1$ and $m_2$ are co-prime and each greater than one.  Given a permissible factorization $m = m_1m_2$, we call the field extension $K \subseteq K(E[m_1]) \cap K(E[m_2])$ the {\bf{entanglement associated to $E/K$ and $(m_1,m_2)$}}.  We say that  $E$ {\bf{has a non-abelian entanglement over $K$ at level $m$}} if, for some permissible factorization $m = m_1m_2$, the entanglement associated to $E/K$ and $(m_1,m_2)$ is a non-abelian extension of $K$.  Finally, we say that {\bf{$E$ has a non-abelian entanglement over $K$}} if it has a non-abelian entanglement over $K$ at some level $m$.
\end{Definition}
\begin{Remark}
In case the pair $(m_1,m_2)$ is uniquely determined by $m$ (i.e. in case $m$ has exactly 2 prime factors), we call the extension $K \subseteq K(E[m_1]) \cap K(E[m_2])$ simply the {\bf{entanglement at $m$ associated to $E/K$}}.
\end{Remark}
\begin{problem} \label{entanglementproblem}
For a given number field $K$, classify the elliptic curves $E$ over $K$ that have a non-abelian entanglement over $K$.  (This is a restatement of \cite[Question 1.1]{braujones}.)
\end{problem}
It is difficult to address this problem completely, since non-abelian entanglements can correspond to $K$-rational points on modular curves of genus greater than $1$, and could thus occur ``sporadically'' for large $m$, a la Faltings' Theorem.  
We therefore focus at present on classifying all ``one-parameter families'' of non-abelian entanglements, or in other words on the case where the associated modular curve has genus zero.

To state our main theorem precisely, we need to recall a few fundamentals about modular curves.
For an arbitrary level $m \in \mbn$, we let $X(m)$ denote the complete modular curve of level $m$, which parametrizes pairs $(E,\mc{B})$, where $E$ is an elliptic curve and $\mc{B} \subseteq E[m]$ is an ordered $\mbz/m\mbz$-basis of $E[m]$.  The curve $X(m)$ is equipped with a natural ``forgetful map''
\[
j_m : X(m) \longrightarrow X(1) \simeq \mbp^1,
\]
whose modular interpretation is $j_m\left( (E,\mc{B}) \right) = E$.
The group $\aut(E[m]) \simeq \GL_2(\mbz/m\mbz)$ acts on $X(m)$, and the kernel of this action is $\{ I, -I \} \subseteq \GL_2(\mbz/m\mbz)$.  For any $G(m) \subseteq \GL_2(\mbz/m\mbz)$, we enlarge $G(m)$ by setting
\[
\tilde{G}(m) := \langle G(m), -I \rangle
\]
and define the modular curve $X_{\tilde{G}(m)}$ to be the quotient curve of orbits under the action of $\tilde{G}(m)$:
\[
X_{\tilde{G}(m)} := X(m) / \tilde{G}(m).
\]
Let $F = \mbq(\mu_m)^{\det(G(m))}$ be the subfield of $\mbq(\mu_m)$ fixed by the subgroup 
\[
\det(G(m)) = \det(\tilde{G}(m)) \subseteq (\mbz/m\mbz)^\times \simeq \gal(\mbq(\mu_m)/\mbq). 
\]
The modular curve $X_{\tilde{G}(m)}$ is defined over $F$.  In particular, $X_{\tilde{G}(m)}$ is defined over $\mbq$ if and only if $\det(G(m)) = (\mbz/m\mbz)^\times$.   Furthermore, the forgetful map $j_m$ on $X(m)$ induces a map
\[
j_{\tilde{G}(m)} : X_{\tilde{G}(m)} \longrightarrow X(1) \simeq \mbp^1.
\]
Note that in the above construction, the modular curves $X_{\tilde{G}_1(m)}$ and $X_{\tilde{G}_2(m)}$ are isomorphic over $\mbq$ whenever $G_1(m)$ and $G_2(m)$ are conjugate inside $\GL_2(\mbz/m\mbz)$.  
It is thus sensible to coarsen the relations of equality and subset inclusion on the set of subgroups of $\GL_2(\mbz/m\mbz)$ to $\doteq$ and $\dot\subseteq$, where
\begin{equation} \label{defofdotrelations}
\begin{split}
G_1(m) \doteq G_2(m) \; &\myeq \; \exists g \in \GL_2(\mbz/m\mbz) \text{ with } G_1(m) = g G_2(m) g^{-1} \\
G_1(m) \, \dot\subseteq \, G_2(m) \; &\myeq \; \exists g \in \GL_2(\mbz/m\mbz) \text{ with } G_1(m) \subseteq g G_2(m) g^{-1}.
\end{split}
\end{equation}
Using this notation, 
a modular interpretation of rational points on $X_{\tilde{G}(m)}$ can be phrased as follows:  For any number field $K$ with $F \subseteq K$ and $x \in K - \{ 0, 1728 \}$, $x \in j_{\tilde{G}(m)}(X_{\tilde{G}(m)}(K))$ if and only if there is an elliptic curve $E$ defined over $K$ with $j$-invariant equal to $x$ and for which $\gal(K(E[m])/K) \, \dot\subseteq \, \tilde{G}(m)^t$.  In particular, we are choosing to let $\GL_2(\mbz/m\mbz)$ act on $X(m)$ on \emph{the left}\footnote{As is easily verified by direct computation, all subgroups $G(m) \subseteq \GL(\mbz/m\mbz)$ produced in the present paper satisfy the property that 
\[
G(m)^t := \{ g^t : g \in G(m) \}
\]
is $\GL_2(\mbz/m\mbz)$-conjugate to $G(m)$, and so our results are not affected by the choice of left action versus right action.}.  For a helpful discussion about this issue, see \cite[Remark 2.2]{rousezureickbrown}.  For full background details, see \cite{delignerapoport}.
 
 
 There is an infinite set of modular curves (see $\mc{G}_{\nonab}^{\max}$ below) whose $K$ rational points capture all elliptic curves over $K$ without complex multiplication that have a non-abelian entanglement.  Our main theorem explicitly describes the (finite) subset consisting of those modular curves having genus zero.
 Because the level $m$ will vary, we rephrase our definitions in terms of finite index (i.e. open) subgroups $G \subseteq \GL_2(\hat{\mbz})$, where
\[
\GL_2(\hat{\mbz}) = \lim_{\leftarrow} \GL_2(\mbz/m\mbz) \simeq \prod_p \GL_2(\mbz_p).
\]
For any open subgroup $G \subseteq \GL_2(\hat{\mbz})$, we denote by $m_G$ its \emph{level}, i.e. the smallest $m \in \mbn$ for which $\ker\left(\GL_2(\hat{\mbz}) \rightarrow \GL_2(\mbz/m\mbz) \right) \subseteq G$, and for any $m \in \mbn$ we define $G(m) := G \mod m \subseteq \GL_2(\mbz/m\mbz)$.  We extend our notation for the associated modular curve by setting 
\begin{equation} \label{defofGtilde}
\tilde{G} := \langle G, -I \rangle
\end{equation}
and setting the notation
\[
X_{\tilde{G}} := X_{\tilde{G}(m_{\tilde{G}})}, \quad\quad
j_{\tilde{G}} := j_{\tilde{G}(m_G)} : X_{\tilde{G}} \longrightarrow X(1).
\]

\begin{Definition} \label{defofnonabelianentanglementgroup}
Let $G \subseteq \GL_2(\hat{\mbz})$ be an open subgroup of level $m_G$.  We say that $G$ is a {\bf{non-abelian entanglement group}} if there is a number field $K$ and an elliptic curve $E$ over $K$ having a non-abelian entanglement at level $m_G$ and satisfying $G(m_G) \doteq \gal(K(E[m_G])/K)$.  We call $G$ a {\bf{maximal non-abelian entanglement group}} if $G$ is a non-abelian entanglement group that is maximal with respect to $\dot\subseteq$ among all non-abelian entanglement groups.
\end{Definition}
\begin{remark}
One can of course define the notion of a non-abelian entanglement group in purely group-theoretical terms (see Remark \ref{purelygrouptheoreticremark}). 
\end{remark}
Next we elaborate on a technicality that arises from the distinction between $G$ and $\tilde{G}$ in the case when $-I \notin G$.  For a given elliptic curve $E$ over $K$ and open subgroup $G \subseteq \GL_2(\hat{\mbz})$, the property of whether or not $\gal(K(E[m])/K) \, \dot\subseteq \, \tilde{G}(m)$ is independent of twisting in the sense that it is a function just of the $j$-invariant of $E$ (i.e. of the $\ol{K}$-isomorphism class of $E$).  By contrast, in case $-I \notin G(m)$, the finer question of whether or not $\gal(K(E[m])/K) \, \dot\subseteq \, G(m)$ for $E$ corresponding to a point of $X_{\tilde{G}}(K)$ may change as we twist $E$ (i.e. as we vary $E$ within a fixed $\ol{K}$-isomorphism class).  This motivates the following terminology.
\begin{Definition}
We say that a subgroup $G \subseteq \GL_2(\hat{\mbz})$ is {\bf{twist-independent}} if $-I \in G$; otherwise we say that $G$ is {\bf{twist-dependent}}.
\end{Definition}
We now fix notation used in the main theorem.  Define the following subgroups $G_m \subseteq \GL_2(\hat{\mbz})$:
\begin{equation} \label{defofGs}
\begin{split}
G_6 &:= \left\{ g \in \GL_2(\hat{\mbz}) : g \mod 6 \in \left\langle \begin{pmatrix} 1 & 1 \\ 0 & 5 \end{pmatrix}, \begin{pmatrix} 5 & 1 \\ 3 & 2 \end{pmatrix}, \begin{pmatrix} 5 & 4 \\ 4 & 1 \end{pmatrix} \right\rangle \right\}, \\
G_{10} &:=  \left\{ g \in \GL_2(\hat{\mbz}) : g \mod 10 \in \left\langle \begin{pmatrix} 5 & 6 \\ 4 & 5 \end{pmatrix}, \begin{pmatrix} 4 & 9 \\ 9 & 6 \end{pmatrix}, \begin{pmatrix} 7 & 3 \\ 9 & 4 \end{pmatrix} \right\rangle \right\}, \\
G_{15} &:=  \left\{ g \in \GL_2(\hat{\mbz}) : g \mod 15 \in \left\langle \begin{pmatrix} 2 & 3 \\ 14 & 14 \end{pmatrix}, \begin{pmatrix} 4 & 0 \\ 0 & 1 \end{pmatrix}, \begin{pmatrix} 0 & 2 \\ 14 & 0 \end{pmatrix} \right\rangle \right\}, \\
G_{18} &:=  \left\{ g \in \GL_2(\hat{\mbz}) : g \mod 18 \in \left\langle \begin{pmatrix} 7 & 17 \\ 0 & 5 \end{pmatrix}, \begin{pmatrix} 17 & 3 \\ 3 & 14 \end{pmatrix}, \begin{pmatrix} 4 & 3 \\ 3 & 14 \end{pmatrix} \right\rangle \right\}.
\end{split}
\end{equation}
Note that each of the underlying levels is divisible by exactly 2 primes, and thus we have the unique permissible factorizations
\begin{equation} \label{listoffactorizations}
6 = 2\cdot 3, \quad\quad 10 = 2\cdot 5, \quad\quad 15 = 3\cdot 5, \quad\quad 18 = 2\cdot 9.
\end{equation}
Also, each of these groups is checked to be twist-independent.  Next, we define the rational functions $j_6$, $j_{10}$, $j_{15}$ and $j_{18}$ by
\begin{equation} \label{defofj6}
\begin{split}
j_{6}(t) := &2^{10}\,3^3\, t^3 (1 - 4t^3) \\
j_{10}(t) := & s_{10}^3(s_{10}^2+5s_{10}+40),\qquad s_{10} =\frac{3t^6+12t^5+80t^4+50t^3-20t^2-8t+8}{(t-1)^2(t^2+3t+1)^2} \\
j_{15}(t) := & s_{15}^3(s_{15}^2+5s_{15}+40),\qquad s_{15}=t^3-\frac{5-3\sqrt{-15}}{2}\\
j_{18}(t) := & \frac{-3^3\,t^3(t^3-2)(3t^3-4)^3(3t^3-2)^3}{(t^3-1)^2}.
\end {split}
\end{equation}
For $g \in \mbz_{\geq 0}$ we set
\[
\begin{split}
\mc{G} &:= \{G \subseteq \GL_2(\hat{\mbz}) : \text{ $G$ is open} \}, \quad\quad\quad\quad\quad\quad\quad\quad\quad\quad\quad\quad\quad\, \mc{G}(g) := \{ G \in \mc{G} : \genus(X_{\tilde{G}}) = g \}, \\
\mc{G}_{\nonab} &:= \{ G \in \mc{G} : G \text{ is a non-abelian entanglement group} \}, \quad \mc{G}_{\nonab}(g) := \mc{G}_{\nonab} \cap \mc{G}(g), \\
\mc{G}_{\nonab}^{\max} &:= \{G \in \mc{G}_{\nonab}: \nexists H \in \mc{G}_{\nonab} \text{ with } G \, \dot\subsetneq \, H \}, \quad\quad\quad\quad\;\; \mc{G}_{\nonab}^{\max}(g) := \mc{G}_{\nonab}^{\max} \cap \mc{G}(g),
\end{split}
\]
where we are extending the definitions \eqref{defofdotrelations} of $\doteq$ and $\dot\subseteq$ in the obvious way to subgroups of $\GL_2(\hat{\mbz})$, and $G \, \dot\subsetneq \, H$ means that $G \, \dot\subseteq \, H$ and $G \not\doteq H$.
Furthermore, we extend the relations $\dot\subseteq$ and  $\doteq$ to subsets $\mc{S}_1, \mc{S}_2 \subseteq \mc{G}$ by declaring that
\[
\begin{split}
\mc{S}_1 \, \dot\subseteq \, \mc{S}_2 \; &\myeq \; \forall G_1 \in \mc{S}_1\, \exists G_2 \in \mc{S}_2 \text{ with } G_1 \doteq G_2 \\
\mc{S}_1 \doteq \mc{S}_2 \; &\myeq \; \mc{S}_1 \, \dot\subseteq \, \mc{S}_2 \text{ and } \mc{S}_2 \, \dot\subseteq \, \mc{S}_1.
\end{split}
\]
In particular, note that one could have $\mc{S}_1 \doteq \mc{S}_2$ even though $\# \mc{S}_1 \neq \#\mc{S}_2$, since for any single element $G_1 \in \mc{S}_1$, we could have $G_1 \doteq G_2$ for many different $G_2 \in \mc{S}_2$.
\begin{Theorem} \label{mainthm}
We have 
\begin{equation} \label{listofmaximalgroups}
\mc{G}_{\nonab}^{\maxm}(0) \doteq \{ G_6, G_{10}, G_{15}, G_{18} \},
\end{equation}
where the groups $G_m$ are as in \eqref{defofGs}.  In other words, every group $G \in \mc{G}_{\nonab}^{\maxm}(0)$ is $\GL_2(\hat{\mbz})$-conjugate to exactly one of the groups $G_m$ appearing in the right-hand set.
Furthermore, each group $G_m$ is twist-independent of level $m$, and there is a parameter $t$ on $X_{G_m}$ for which  
\[
j_{G_m}(t) = j_m(t),
\]
where $j_m(t)$ is as in \eqref{defofj6}.  The modular curves $X_{G_6}$, $X_{G_{10}}$, and $X_{G_{18}}$ are defined over $\mbq$, whereas the modular curve $X_{G_{15}}$ is defined over $\mbq(\sqrt{-15})$.  Finally, in all cases the underlying entanglement is an $S_3$-entanglement, i.e. for each $G_m \in \mc{G}_{\nonab}^{\maxm}(0)$ and for each elliptic curve $E$ over a number field $K$ satisfying $j(E) \in j_{G_m}(X_{G_m}(K))$ and $\gal(K(E[m])/K) \doteq G_m(m)$, we have 
\[
\gal(K(E[m_1]) \cap K(E[m_2]) / K) \simeq S_3,
\]
where $m = m_1m_2$ is the unique permissible factorization of $m$ as in \eqref{listoffactorizations} and $S_3$ denotes the symmetric group of order 6.
\end{Theorem}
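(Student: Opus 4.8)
The plan is to translate the entanglement condition into group theory via Goursat's lemma, to exploit the genus-zero hypothesis to cut down to a finite list of prime-power pieces, to carry out the resulting finite enumeration (the crux), and finally to verify the remaining assertions for each of $G_6, G_{10}, G_{15}, G_{18}$ by direct computation.

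\emph{Group-theoretic reduction.} Fix $E/K$ and a permissible factorization $m = m_1 m_2$, and write $H = \gal(K(E[m])/K)$ with $H_i = \pr_i(H)$ under the isomorphism $\GL_2(\mbz/m\mbz) \simeq \GL_2(\mbz/m_1\mbz) \times \GL_2(\mbz/m_2\mbz)$. Goursat's lemma presents $H$ as the fiber product of $H_1$ and $H_2$ over a common quotient $Q \simeq H_1/N_1 \simeq H_2/N_2$, and the Galois correspondence identifies $\gal(K(E[m_1]) \cap K(E[m_2])/K) \simeq Q$. Thus $E$ has a non-abelian entanglement at level $m$ along this factorization precisely when $Q$ is non-abelian. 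Combined with realizability of a prescribed image (the Surjectivity Theorem together with twisting, which makes any candidate image occur for some $E$ over some number field once the relevant modular curve carries a suitable point), membership in $\mc{G}_{\nonab}$ becomes a purely group-theoretic condition on $G(m)$. Since the smallest non-abelian group has order $6$, both $H_1$ and $H_2$ must admit a common non-abelian quotient, with $|Q| \geq 6$.

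\emph{Finiteness.} The key observation is that genus is monotone under the natural surjections of modular curves: for each prime power $\ell^k$ exactly dividing $m$, the quotient map $X_{\tilde{G}} \to X_{\tilde{G}(\ell^k)}$ is surjective, so $\genus(X_{\tilde{G}}) = 0$ forces $\genus(X_{\tilde{G}(\ell^k)}) = 0$. The genus-zero modular curves of prime-power level are completely classified and their levels are bounded; this confines the primes dividing $m$ to a finite set and caps the exponents, leaving finitely many candidate levels. The same monotonicity also settles the maximality bookkeeping: if $G \,\dot\subseteq\, H$ then $\genus(X_{\tilde{H}}) \le \genus(X_{\tilde{G}})$, so any non-abelian entanglement group containing a genus-zero one is again genus zero. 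Consequently $\mc{G}_{\nonab}^{\maxm}(0)$ coincides with the set of $\dot\subseteq$-maximal elements of the finite set $\mc{G}_{\nonab} \cap \mc{G}(0)$, and it suffices to search among genus-zero groups.

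\emph{Enumeration (the main obstacle).} Within this finite range I would enumerate, up to conjugacy, the genus-zero subgroups at each admissible prime power, record which admit a non-abelian quotient and what those quotients are, and then glue coprime pieces along a common non-abelian $Q$ by forming fiber products. For each fiber product the genus of the composite $X_{\tilde{G}}$ is recomputed by Riemann--Hurwitz (from cusp and elliptic-point counts for the action of $\tilde{G}(m)$ on $X(m)$), and only the genus-zero composites are retained; taking $\dot\subseteq$-maximal elements of the resulting poset should leave exactly $G_6, G_{10}, G_{15}, G_{18}$. The non-abelian quotients arise from $\GL_2(\mbf_2) \simeq S_3$ at the prime $2$, from the composite $\GL_2(\mbf_3) \twoheadrightarrow \PGL_2(\mbf_3) \simeq S_4 \twoheadrightarrow S_4/V_4 \simeq S_3$ at the prime $3$, and from suitable genus-zero subgroups admitting $S_3$ as a quotient at levels $5$ and $9$ (at level $5$ via the exceptional isomorphism $\PGL_2(\mbf_5) \simeq S_5$ and an $S_4$-subgroup thereof). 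I expect this step to be the principal difficulty: one must certify both completeness (no genus-zero non-abelian entanglement group is overlooked) and genuine maximality (none of the four is properly contained in another non-abelian entanglement group), which requires a careful, largely computer-assisted traversal of the subgroup lattice and leans on the existing classifications of genus-zero modular curves of small level.

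\emph{Properties of the four groups.} For each $G_m$ the remaining claims are verified directly. Twist-independence and exactness of the level follow from checking $-I \in G_m(m)$ inside the explicitly generated matrix group. The genus identity $\genus(X_{\tilde{G}_m}) = 0$ is a Riemann--Hurwitz count for $j_{G_m} : X_{G_m} \to X(1)$. The equality $j_{G_m}(t) = j_m(t)$ is obtained by exhibiting the parameter $t$ and matching ramification over $j \in \{0, 1728, \infty\}$ together with the $q$-expansions of the relevant modular functions; this is the most computational piece. The field of definition is read off from $\det(G_m(m)) \subseteq (\mbz/m\mbz)^\times$: a surjective determinant yields $\mbq$ (for $m = 6, 10, 18$), whereas for $m = 15$ the determinant image is the index-two subgroup whose fixed field is $\mbq(\sqrt{-15})$. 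Finally, computing the Goursat quotient $Q$ of each $G_m(m)$ and checking $Q \simeq S_3$ yields the $S_3$-entanglement conclusion through the identification established in the group-theoretic reduction.
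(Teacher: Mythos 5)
Your Goursat reduction and your verification plan for the four explicit groups are in the spirit of the paper, but your finiteness step has a genuine gap, and it is precisely the gap that the paper spends most of Section \ref{Section:FiniteSearch} closing. Genus monotonicity does force $\genus(X_{\tilde{G}(\ell^k)})=0$ for each prime power $\ell^k$ exactly dividing $m$, and this (together with Cummins--Pauli) bounds the $\SL_2$-levels involved; but it does \emph{not} cap the level $m_G$ of $G$ itself, because the entanglement (the fibering) can live at primes where every prime-power projection of $G$ is full. Concretely, Section \ref{infiniteD6family} of the paper constructs, for every fundamental discriminant $D$, a genus-zero non-abelian ($D_6$-)entanglement group $G_{6,D}$ with $\SL_2$-level $6$ but $\GL_2$-level $\lcm(6,|D|)$: one fibering map factors through the character $\chi_D(\det g)$ of conductor $|D|$, while $\pi_\ell(G_{6,D})=\GL_2(\mbz_\ell)$ for all $\ell \nmid 6$. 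So your assertion that $\mc{G}_{\nonab}\cap\mc{G}(0)$ is a finite set (up to $\doteq$) is false, and a search through ``finitely many candidate levels'' cannot by itself certify completeness of the maximal list. Relatedly, Cummins--Pauli classifies groups containing $-I$, so even bounding $m_{\SL_2}(G)$ in terms of $m_{\SL_2}(\tilde{G})$ requires an argument (the paper's Lemma \ref{Mto2timesMlemma}, giving a factor of at most $2$); your proposal never engages with the $-I$/twist-dependence issue.

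The missing ingredient is the second assertion of Proposition \ref{keypropositionformainthm}: every $G\in\mc{G}_{\nonab}^{\max}(0)$ satisfies $m_{\GL_2}(G)=m_{\SL_2}(G)$. This is not formal --- it fails without the maximality hypothesis (the family above) and fails for maximal groups of genus $g\geq 1$ --- and the paper proves it via the pre-twist/twist-cover machinery (Lemma \ref{NsubSlemma}, Proposition \ref{pretwistscenarioproposition}, Corollary \ref{keycorollarytobeused}): if a maximal $G$ had $m_{\GL_2}(G)>m_{\SL_2}(G)=:\ol{m}$, then certain data visible entirely at level $\ol{m}$ would have to exist (normal subgroups $\tilde{N}_i$ with $[\ol{G}(\ol{m}_i),N^{\ol{G}}(\ol{m}_i)]\subseteq\tilde{N}_i\nsupseteq[\ol{G}(\ol{m}_i),\ol{G}(\ol{m}_i)]$, together with a genus-zero fibered product $S'$ of $\SL_2$-parts), and a MAGMA search shows no such configuration occurs at any admissible $\ol{m}$. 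Without this step, your enumeration would only show that $G_6,G_{10},G_{15},G_{18}$ are maximal among genus-zero entanglement groups whose level lies in your finite list, not that they exhaust $\mc{G}_{\nonab}^{\max}(0)$. (A secondary, benign divergence: for the explicit $j$-maps the paper glues two $S_3$-cubics via the resolvent $r=s_1t_1+s_2t_2+s_3t_3$ of Lemma \ref{Lemma:DiagonalCubic} rather than matching $q$-expansions and ramification; either route is viable once the group-theoretic list is secured.)
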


Theorem \ref{mainthm} may be restated in terms of elliptic curves over $K(t)$ as follows.
\begin{Theorem} \label{corollarythm}
Let $K$ be a number field and let $E$ be an elliptic curve defined over $K(t)$.  Then $E$ has a non-abelian entanglement over $K(t)$ if and only if the $j$-invariant $j_E(t) \in K(t)$ satisfies
\[
j_E(t) \in \{ j_6(f(t)), j_{10}(f(t)), j_{15}(f(t)), j_{18}(f(t)) : f(t) \in K(t) \},
\]
where the rational functions $j_6$, $j_{10}$, $j_{15}$ and $j_{18}$ are as in \eqref{defofj6}.  The case $j_E(t) = j_{15}(f(t))$ can only happen if $\sqrt{-15} \in K$.  Finally, if $j_E(t) = j_m(f(t))$ for some $f(t) \in K(t)$, then $E$ has a non-abelian entanglement at level $m$ and the underlying entanglement has Galois group $S_3$ over $K(t)$.
\end{Theorem}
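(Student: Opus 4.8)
I will deduce Theorem~\ref{corollarythm} from Theorem~\ref{mainthm}. For the forward implication, suppose $E/K(t)$ has a non-abelian entanglement over $K(t)$. If $j_E(t)$ is constant then $E$ is isotrivial and this case is handled separately; otherwise $j_E(t)$ is non-constant, so $E$ is non-CM (a non-isotrivial elliptic curve has geometric endomorphism ring $\mbz$). By the defining ``capturing'' property of $\mc{G}_{\nonab}^{\maxm}$, there is a maximal non-abelian entanglement group $H$ with $\gal(K(t)(E[m])/K(t))\,\dot\subseteq\,\tilde H(m)$, where $m=m_H$. Comparing determinants pins down the field of definition: for $E/K(t)$ the image of $\det$ is the image of the cyclotomic character, namely $\gal(K(\mu_m)/K)\subseteq(\mbz/m\mbz)^\times$, so the containment forces $\gal(K(\mu_m)/K)\subseteq\det H(m)$ and hence $F_H=\mbq(\mu_m)^{\det H(m)}\subseteq\mbq(\mu_m)\cap K\subseteq K$. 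With $F_H\subseteq K$ the modular interpretation supplies a point $P\in X_{\tilde H}(K(t))$ with $j_{\tilde H}(P)=j_E(t)$.

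Since $X_{\tilde H}$ is a curve over $F_H$ and $j_E(t)$ is non-constant, $P$ corresponds to a non-constant morphism $\mbp^1\to X_{\tilde H}$; as $\mbp^1$ admits no non-constant morphism to a curve of positive genus, this forces $\genus(X_{\tilde H})=0$, i.e. $H\in\mc{G}_{\nonab}^{\maxm}(0)$. Theorem~\ref{mainthm} then gives $H\doteq G_m$ for a unique $m\in\{6,10,15,18\}$, the group $G_m$ is twist-independent, and $X_{G_m}\cong\mbp^1$ over $F_m$ with a coordinate $t$ satisfying $j_{G_m}(t)=j_m(t)$. Writing $P$ in this coordinate yields some $f\in F_m(t)$ with $j_E(t)=j_m(f(t))$. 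For $m\in\{6,10,18\}$ one has $F_m=\mbq$, while for $m=15$ the determinant computation above reads $K\supseteq\mbq(\mu_{15})^{\det G_{15}(15)}=\mbq(\sqrt{-15})$; in every case $f\in K(t)$.

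Conversely, if $j_E(t)=j_m(f(t))$ with $f\in K(t)$, then the identity $j_{G_m}=j_m$ places $j_E(t)$ in $j_{G_m}(X_{G_m}(K(t)))$, and twist-independence of $G_m$ transfers the conclusion to $E$ itself, giving $\gal(K(t)(E[m])/K(t))\,\dot\subseteq\,G_m(m)$. Since $G_m$ is a non-abelian entanglement group, $E$ acquires a non-abelian entanglement at level $m$ for the factorization in \eqref{listoffactorizations}. To identify its Galois group as exactly $S_3$, I would compute the Goursat quotient attached to $G_m(m)\subseteq G_m(m_1)\times G_m(m_2)$ and exhibit the associated $S_3$-extension of $K(t)$ explicitly from the formulas \eqref{defofj6}---for $m=6$, the splitting field of the $2$-division cubic, realized inside the $3$-division field---verifying directly that it is a genuine $S_3$-extension.

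The hard part will be this last step. The containment $\dot\subseteq$ by itself only guarantees that the entanglement is a \emph{quotient} of $S_3$, so I must rule out degeneration to an abelian or trivial subextension for \emph{every} $f\in K(t)$, not merely for generic $f$. Equivalently, I must show that $\gal(K(t)(E[m])/K(t))$ surjects onto the $S_3$-quotient of $G_m(m)$ under an arbitrary non-constant specialization $f$; I expect to handle this by presenting the entanglement field as an explicit radical extension in $f(t)$ and computing its degree. The isotrivial case will require a separate short argument, since the genus-zero reduction does not apply when $j_E$ is constant.
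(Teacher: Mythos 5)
The paper contains no separate proof of Theorem~\ref{corollarythm}: it is offered as a restatement of Theorem~\ref{mainthm}, so your proposal is being measured against the deduction the authors left implicit. Your forward direction fills this in correctly and is essentially the intended reduction: Goursat plus the group-theoretic characterization of Remark~\ref{purelygrouptheoreticremark} (which you need, since Definition~\ref{defofnonabelianentanglementgroup} is phrased for number fields) show that $\pi_{\GL_2}^{-1}\bigl(\gal(K(t)(E[m])/K(t))\bigr)$ is a non-abelian entanglement group; the finite poset above it produces a maximal one $H$; your determinant computation correctly forces $F_H \subseteq K$; L\"uroth (no non-constant map $\mbp^1 \to X$ for $\genus(X)\geq 1$) forces $\genus(X_{\tilde H})=0$ when $j_E$ is non-constant; and Theorem~\ref{mainthm} then identifies $H$ with one of the four groups. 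This half is sound, modulo routinely extending the modular interpretation from number fields to the base $K(t)$.

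The gap is the converse, and it is worse than your closing paragraph suggests: the step you defer --- proving exact $S_3$ entanglement ``for \emph{every} $f\in K(t)$'' --- is not merely hard, it cannot be carried out in that generality. Concretely, for $m=6$ one computes $j_6(t)-1728 = -2^6 3^3\,(8t^3-1)^2$, so every $E/K(t)$ with $j_E = j_6(f)$ satisfies $K(t)(\sqrt{\gD_E}) = K(t)(\sqrt{-3})$, a \emph{constant} extension. If $\sqrt{-3}\in K$, then $\gal(K(t)(E[2])/K(t)) \subseteq A_3$ is cyclic, and since the fibering of $G_6(6)$ forces $K(t)(E[2]) \subseteq K(t)(E[3])$, the level-$6$ entanglement equals $K(t)(E[2])$ and is abelian for every choice of $f$; analogous constant quadratic obstructions occur at the other levels (compare the paper's own observation in Section~\ref{Section:Level15} that the quadratic subfields $K(\sqrt{-3})$ and $K(\sqrt{5})$ coincide iff $\sqrt{-15}\in K$). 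The point is that $\gal(K(t)(E[m])/K(t)) \,\dot\subseteq\, G_m(m)$ never forces the entanglement to be exactly $S_3$; Theorem~\ref{mainthm} guards against this by assuming $\gal(K(E[m])/K) \doteq G_m(m)$, a hypothesis the corollary's phrasing suppresses and which your write-up must reinstate, e.g.\ by proving $\gal(K(t)(E[m])/K(t)) \doteq G_m(m)$ under explicit non-degeneracy conditions on $K$ and $f$ rather than unconditionally. For the same reason the isotrivial case cannot be ``handled separately'' by a short argument: for constant $j_E = j_m(c)$ the entanglement can degenerate at special $c\in K$, and conversely the base change to $K(t)$ of a number-field curve whose non-abelian entanglement arises from a positive-genus modular curve has constant $j_E$ lying outside all four images, so both implications fail there. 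The statement has to be read with $j_E$ non-constant (it classifies one-parameter families, as the introduction says), and a complete proof should state this restriction and the genericity caveat explicitly instead of deferring them.
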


\begin{remark}
The infinite family of $j$-invariants $j_6(t) = 2^{10}\,3^3\, t^3 (1 - 4t^3)$ was considered in previous work of the first author (see \cite{braujones}).  In that paper, it is incorrectly stated that, for any elliptic curve $E$ over $\mbq$ with $j$-invariant $j_E$, we have $j_E = j_6(t_0)$ for some $t_0 \in \mbq$ if and only if $E \simeq_{\ol{\mbq}} E'$ for some elliptic curve $E'$ over $\mbq$ satisfying $\mbq(E'[2]) \subseteq \mbq(E'[3])$.  Although the ``only if'' part is correct, the converse can fail for elliptic curves $E/\mbq$ satisfying $\mbq(E[2]) = \mbq$.  A correct biconditional statement is as follows:  For each elliptic curve $E$ over $\mbq$ with $j$-invariant $j_E \in \mbq - \{0, 1728 \}$, 
\[
j_E = j_6(t_0) \text{ for some } t_0 \in \mbq \; \Longleftrightarrow \;  \exists \, E' / \mbq \text{ with } E' \simeq_{\ol{\mbq}} E, \,  [\mbq(E'[2]) : \mbq] = 6 \text{ and } \mbq(E'[2]) \subseteq \mbq(E'[3]).
\]
The first author thanks Maarten Derickx for pointing this out.
\end{remark}

\begin{remark}
When $K = \mbq$, Theorem \ref{mainthm} leads in some cases to precise criteria for detecting elliptic curves over $\mbq$ for which every $\gal(\mbq(E[n])/\mbq)$ is as large as possible, relative to a given obstruction.  We discuss this in more detail in Section \ref{Section:Applications}. Another motivation to consider Problem \ref{entanglementproblem} is its relationship to constants decorating the main term in various conjectures attached to elliptic curves (see \cite{brauthesis}).
\end{remark}


The proof of Theorem \ref{mainthm} breaks up into two main steps. The first is to establish Proposition \ref{keypropositionformainthm} below, which reduces the problem to a finite search and hence enables us to verify \eqref{listofmaximalgroups} by explicit computation. The proposition is established in Section \ref{Section:FiniteSearch} by a series of technical group-theoretical lemmas, essentially deriving properties of $G$ that are visible at the lower $\SL_2$-level whenever the $\GL_2$-level and $\SL_2$-level differ (see Definition \ref{defofGL2levelandSL2level} below). For $g\geq 1$, the latter statement of Proposition \ref{keypropositionformainthm} is false, in that even {\em maximal} non-abelian entanglement groups can have distinct $\SL_2$-level and $\GL_2$-level. The proposition also fails to hold, even for $g=0$, if we remove the maximality assumption. To illustrate this fact, we have included in Section \ref{infiniteD6family} an infinite family of (non-maximal) genus $0$ non-abelian entanglement groups with unbounded $\GL_2$-level. The second main step in the proof of Theorem \ref{mainthm} is to derive explicit models for the modular curves, as well as the corresponding maps to the $j$-line. This work is done in Section \ref{Section:ExplicitModels}.

For any open subgroup $G \subseteq \GL_2(\hat{\mbz})$, we recall and extend the concept of its level $m_G$ in the following definition.
\begin{Definition} \label{defofGL2levelandSL2level}
For an open subgroup $G \subseteq \GL_2(\hat{\mbz})$, we define the positive integer
\[
m_{\GL_2}(G) := \min \left\{ m \in \mbn : \ker\left( \GL_2(\hat{\mbz}) \rightarrow \GL_2(\mbz/m\mbz) \right) \subseteq G \right\}
\]
and call it the {\textbf{$\GL_2$-level of $G$}}.  Furthermore, we define the {\textbf{$\SL_2$-level of $G$}} by
\[
m_{\SL_2}(G) := \min \left\{ m \in \mbn : \ker\left( \SL_2(\hat{\mbz}) \rightarrow \SL_2(\mbz/m\mbz) \right) \subseteq G \right\}.
\]
\end{Definition}
\noindent
It is straightforward to see that $m_{\SL_2}(G)$ always divides $m_{\GL_2}(G)$; they may or may not be equal.  Next, for any level $m \in \mbn$, we define
\[
\begin{split}
\mc{G}_{\nonab}^{m_{\SL_2} = m} :=& \; \{ G \in \mc{G}_{\nonab} : \, m_{\SL_2}(G) = m \}, \quad\quad\; \mc{G}_{\nonab}^{m_{\SL_2} = m}(g) := \mc{G}_{\nonab}^{m_{\SL_2} = m} \cap \mc{G}(g), \\
\mc{G}_{\nonab}^{m_{\GL_2} = m} :=& \;  \{ G \in \mc{G}_{\nonab} : \, m_{\GL_2}(G) = m \}, \quad\quad \mc{G}_{\nonab}^{m_{\GL_2} = m}(g) := \mc{G}_{\nonab}^{m_{\GL_2} = m} \cap \mc{G}(g).
\end{split}
\]
\begin{proposition} \label{keypropositionformainthm}
With the notation just outlined, we have
\begin{equation} \label{finitelistforkeyprop}
\mc{G}_{\nonab}(0) = \bigsqcup_{m \in \mc{L}} \mc{G}_{\nonab}^{m_{\SL_2} = m}(0),
\end{equation}
where $\mc{L} = \{ 6, 10, 12, 15, 18, 20, 24, 30, 36, 40, 48, 60, 72, 96 \}$.  Furthermore, for every $G \in \mc{G}_{\nonab}^{\max}(0)$, we have $m_{\GL_2}(G) = m_{\SL_2}(G)$.
\end{proposition}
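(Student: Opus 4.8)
The statement to be proved has two halves: the identity \eqref{finitelistforkeyprop}, which (since each group has a single $\SL_2$-level, so the union is automatically disjoint) is equivalent to the assertion that $m_{\SL_2}(G) \in \mc{L}$ for every $G \in \mc{G}_{\nonab}(0)$; and the equality $m_{\GL_2}(G) = m_{\SL_2}(G)$ for maximal $G$. The organizing principle for both is the observation that $\genus(X_{\tilde G})$ is an invariant of the $\SL_2$-part of $G$ alone: over $\ol{\mbq}$ the components of $X_{\tilde G}$ are indexed by $(\mbz/m\mbz)^\times / \det(\tilde G)$, and each is isomorphic to the quotient of $\mbh^*$ by the image of $\tilde G \cap \SL_2(\hat{\mbz})$, so the genus depends only on the congruence subgroup cut out by $\tilde G \cap \SL_2(\hat{\mbz})$, i.e. only on data of level $m_{\SL_2}(G)$. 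Thus genus zero constrains the $\SL_2$-part while leaving $\det(G)$, and hence $m_{\GL_2}(G)$, unconstrained.

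For the first half I would proceed as follows. Since there are only finitely many genus-zero congruence subgroups up to conjugacy, $m_{\SL_2}(G)$ a priori ranges over a finite set, and it remains to cut this down to $\mc{L}$. A non-abelian entanglement at level $m_G = m_1 m_2$ produces, by Goursat's lemma, a non-abelian common quotient $Q$ of the two projections $G(m_i)$; since the image of $G(m_i) \cap \SL_2$ in $Q$ is normal with abelian quotient, it contains $[Q,Q]$, and as $Q$ is non-abelian this image is nontrivial. Consequently $G \cap \SL_2(\hat{\mbz})$ is itself a proper fibre product over a nontrivial group, so $m_{\SL_2}(G)$ is divisible by at least two primes, and $[Q,Q]$ is a common subquotient of $\SL_2$-reductions at the primes of $m_1$ and of $m_2$. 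I would then combine this with the fact that $X_{\tilde G}$ dominates each prime-power part $X_{\tilde G^{(p)}}$ (genus cannot drop under a dominant map) to force each such part to be genus zero; a Riemann--Hurwitz degree count on the fibre-product curve, whose degree is essentially the product of the local degrees, then excludes primes $\geq 7$ and bounds the exponents. Enumerating the surviving products and discarding those whose entanglement curve has positive genus is the promised explicit computation and yields exactly $\mc{L}$.

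For the second half, suppose for contradiction that $G \in \mc{G}_{\nonab}^{\max}(0)$ has $m := m_{\SL_2}(G)$ properly dividing $m_{\GL_2}(G)$, and set $H := \{ g \in \GL_2(\hat{\mbz}) : g \bmod m \in G(m) \}$, the full preimage of $G(m)$. Then $G \dot\subseteq H$ with $G \not\doteq H$, since $m_{\GL_2}(G) > m$. I claim $H \in \mc{G}_{\nonab}^{}(0)$, which contradicts maximality. First, $H$ still has genus zero provided $H \cap \SL_2(\hat{\mbz})$ cuts out the same congruence subgroup as $G \cap \SL_2(\hat{\mbz})$; this reduces to the identity $G(m) \cap \SL_2(\mbz/m\mbz) = (G \cap \SL_2(\hat{\mbz})) \bmod m$, which I would establish (one of the technical lemmas) by lifting an $\SL_2$-reduction to $G$ and correcting its determinant inside $\ker(G \to G(m))$. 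Second, and crucially, $H$ must remain a non-abelian entanglement group. Here the point is that the surplus of $m_{\GL_2}(G)$ over $m$ lives purely in the determinant direction at some prime $p$: the $\SL_2$-image at $p$ has already stabilized at level $p^{v_p(m)}$, so the extra level is carried by a cyclotomic character of $p$-power conductor exceeding $p^{v_p(m)}$. Such a character can enter the entanglement only by matching an abelian character of the coprime complementary level, and the available abelian characters there are severely restricted — $\GL_2(\mbz/2^k\mbz)^{\ab}$ is a $2$-group, the odd part of $\GL_2(\mbz/3^k\mbz)^{\ab}$ is a $3$-group, and $\GL_2(\F_5)^{\ab} \simeq \mbz/4\mbz$ — so no matching character of the requisite order and conductor exists. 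Hence the non-abelian quotient $Q$ descends to $H(m) = G(m)$, and $H$ is a genus-zero non-abelian entanglement group strictly larger than $G$.

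The step I expect to be the main obstacle is exactly this last lemma: proving that the non-abelian entanglement is detectable already at the $\SL_2$-level. The difficulty is that entanglement can legitimately couple a genuinely cyclotomic character on one factor to a non-cyclotomic, group-theoretic character on the other — for instance the sign character $\gal \to \{\pm 1\}$ of $\SL_2(\F_2) \simeq S_3$ matching $\mbq(\sqrt{-3}) \subseteq \mbq(\mu_3)$ at level $6$ — so one cannot simply assert that ``the determinant never participates.'' The argument must instead pin down precisely which determinant characters are forced to have conductor dividing $m_{\SL_2}(G)$, via the prime-by-prime analysis of local abelianizations sketched above, and rule out the exotic cross-matchings (a cyclotomic character of conductor a prime power $p^j$ equal to a field cut out on the coprime side) by showing they would either raise the $\SL_2$-level, raise the genus, or render $G$ non-maximal. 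It is here that the genus-zero and maximality hypotheses must be used in tandem rather than separately.
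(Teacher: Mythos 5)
Your first half tracks the paper's actual route fairly closely: finiteness of genus-zero congruence subgroups (the paper cites the effective Cummins--Pauli list \eqref{cumminspaulilistoflevels}), a Goursat argument showing the entanglement forces the $\SL_2$-level to admit a permissible factorization (Corollary \ref{nontrivialabelianquotientcor}), and a finishing computation. Two repairable elisions: the genus constrains $\tilde{G} \cap \SL_2(\hat{\mbz})$, not $G \cap \SL_2(\hat{\mbz})$, and the paper needs Lemmas \ref{horizontallemma} and \ref{verticallemma} (i.e.\ Lemma \ref{Mto2timesMlemma}) to bound $m_{\SL_2}(G)/m_{\SL_2}(\tilde{G}) \in \{1,2\}$; and no Riemann--Hurwitz count excludes primes $\geq 7$ --- the levels $14, 21, 42, 56$ survive all the way to \eqref{thinneddownlevels} and are removed only by the final MAGMA search.

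The genuine gap is in your second half, in two places. First, your ``technical lemma'' $G(m) \cap \SL_2(\mbz/m\mbz) = G_{\SL_2}(m)$ (for $m = m_{\SL_2}(G) < m_{\GL_2}(G)$) is false: the lift-and-correct-determinant step requires $\det(g)^{-1} \in \det\left( G \cap \ker \pi_m \right)$, which need not hold. The paper cautions about exactly this containment being proper right after \eqref{defofHsubSL2}, and the groups $G_{6,D}$ of Section \ref{infiniteD6family} give explicit counterexamples: an element with $\det \equiv 1 \bmod 6$ but $\chi_D = -1$ reduces into $G_{6,D}(6) \cap \SL_2(\mbz/6\mbz)$ but not into $(G_{6,D})_{\SL_2}(6)$. (Fortunately you do not need this lemma: $G \subseteq H$ gives a covering $X_{\tilde{G}} \to X_{\tilde{H}}$, so $\genus(X_{\tilde{H}}) = 0$ is automatic.) Second, and fatally, your key claim --- that the non-abelian quotient descends to $H(m) = G(m)$ because ``no matching character of the requisite order and conductor exists'' --- is refuted by that same family: $G_{6,D}$ is a genus-zero non-abelian ($D_6$) entanglement group with $m_{\SL_2} = 6$ and $m_{\GL_2} = \lcm(6,|D|)$ unbounded, built precisely by coupling the cyclotomic character $\chi_D$ of arbitrary conductor to a quadratic character at level $6$; quadratic characters are plentiful in the local abelianizations you list, so conductor/order bookkeeping cannot exclude such couplings. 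What provably descends to level $\ol{m}$ is only a nontrivial quotient $\ol{\Gamma}$ with central kernel (Lemma \ref{NsubSlemma}); when $\ol{\Gamma}$ is non-abelian you indeed get your contradiction at once, but for $G \in \mc{G}_{\nonab}^{\max}$ the live case is exactly $\ol{\Gamma}$ abelian (Corollary \ref{nontrivialabelianquotientcor}). Moreover, since the equality $m_{\GL_2}(G) = m_{\SL_2}(G)$ fails for maximal groups once $g \geq 1$, no genus-free, prime-local argument of the kind you sketch can close this case. The paper closes it by extracting level-$\ol{m}$ necessary conditions for a twist cover --- the data $(\tilde{N}_1, \tilde{N}_2, \tilde{H}_i, \theta_i, S')$ of Corollary \ref{keycorollarytobeused}, with \eqref{keypropertyofNsubitilde} and $\genus(X_{\tilde{S}'}) = 0$ --- and exhausting all configurations by computer for every level in \eqref{listofmbars}. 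Some substitute for that exhaustive search (which you acknowledge as ``the main obstacle'' but do not supply) is indispensable, so the maximality statement remains unproven in your proposal.
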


As a byproduct of the computations involved in the proof of Proposition \ref{keypropositionformainthm}, we obtain, for each $m \in \mc{L}$, an explicit list of the groups $G \in \mc{G}_{\nonab}^{m_{\GL_2} = m}(0) / \doteq$.
Table \ref{tableforlevelsneq30} lists, for each $m \in \mc{L} - \{ 30, 60 \}$, the number of groups in $\mc{G}_{\nonab}^{m_{\GL_2} = m}(0) / \doteq$.  Furthermore, it details which entanglement groups occur.  More precisely, for any non-abelian entanglement group $G$ of level $m$ and elliptic curve $E$ over $K$ satisfying
\begin{equation} \label{GisGaloisGroup}
G(m) \doteq \gal(K(E[m])/K),
\end{equation}
there exists by definition a permissible factorization $m = m_1m_2$ so that
\begin{equation} \label{defofHquotient}
H := \gal(K(E[m_1] \cap K(E[m_2]) / K) 
\end{equation} 
is a non-abelian group.  By \eqref{GisGaloisGroup} and the Galois correspondence, the group $H$ in \eqref{defofHquotient} is uniquely determined by $G$ and the pair $(m_1,m_2)$; we call $H$ \emph{the quotient associated to $G$ and $(m_1,m_2)$}.  
In case the level $m$ has only two distinct primes in its factorization, the co-prime integers $m_1$ and $m_2$ satisfying $m = m_1m_2$ are uniquely determined; in this case we simply call $H$ \emph{the quotient associated to $G$} and define
\begin{equation} \label{defofHofm}
\mc{G}_{\nonab}^{m_{\GL_2} = m}(g,H) := \{ G \in \mc{G}_{\nonab}^{m_{\GL_2} = m}(g) : \, H \text{ is the quotient associated to $G$} \}.
\end{equation}
There are exactly 3 groups $H$ that arise as non-abelian quotients associated to $\ds G \in \bigcup_{m \in \mc{L}} \mc{G}_{\nonab}^{m_{\GL_2} = m}(0)$, namely the dihedral groups $D_3$ ($\simeq S_3$) and $D_6$ of orders 6 and 12 respectively, and the dicyclic group\footnote{The dicyclic group satisfies $\dic_3 \simeq \mbz/4\mbz \ltimes \mbz/3\mbz$, where the map $\mbz/4\mbz \rightarrow \aut(\mbz/3\mbz)$ defining the semidirect product structure is the unique non-trivial group homomorphism.} $\dic_3$ of order 12.  For levels $m \in \mc{L}$ that are divisible by just two distinct primes, our results give the data displayed in Table \ref{tableforlevelsneq30}.

\begin{table}[!ht]
\[
\begin{array}{|c||c|c|c|c|} \hline
 & & & & \\[-.75 em]
m & 
\left| \, \mc{G}_{\nonab}^{m_{\GL_2}=m}\left( 0 \right)/\doteq \, \right| & 
\left| \,  \mc{G}_{\nonab}^{m_{\GL_2}=m}\left( 0,D_3 \right) / \doteq \, \right| & 
\left| \,  \mc{G}_{\nonab}^{m_{\GL_2}=m}\left( 0,D_6 \right) / \doteq \, \right| & 
\left| \,  \mc{G}_{\nonab}^{m_{\GL_2}=m}\left( 0,\dic_3 \right) / \doteq \, \right| \\
 & & & & \\[-.75 em]
\hline \hline 6 & 4 & 4 & 0 & 0 \\ 
\hline 10 & 1 & 1 & 0 & 0 \\ 
\hline 12 & 12 & 10 & 2 & 0 \\ 
\hline 15 & 1 & 1 & 0 & 0 \\ 
\hline 18 & 10 & 10 & 0 & 0 \\
\hline 20 & 4 & 3 & 0 & 1 \\
\hline 24 & 54 & 38 & 16 & 0 \\
\hline 36 & 30 & 24 & 6 & 0 \\    
\hline 40 & 2 & 1 & 0 & 1 \\
\hline 48 & 56 & 40 & 16 & 0 \\
\hline 72 & 38 & 6 & 32 & 0 \\
\hline 96 & 12 & 4 & 8 & 0 \\
\hline
\end{array}
\]
\vspace{.1in}
\caption{Frequencies of genus zero non-abelian entanglement groups of level $\in \mc{L} \backslash \{ 30, 60 \}$}
\label{tableforlevelsneq30}
\end{table}

For the remaining levels $m \in \{ 30, 60 \}$, we must refine \eqref{defofHofm} to reflect the dependence on the pair $(m_1,m_2)$ occurring in the permissible factorization $m = m_1m_2$, which isn't unique in this case.  We define
\[
\begin{split}
\mc{G}_{\nonab}^{m_{\GL_2} = m}\left( g,(m_1,m_2) \right) &:= \{ G \in \mc{G}_{\nonab}^{m_{\GL_2} = m}(g) : \text{ the quotient associated to $G$ and $(m_1,m_2)$ is non-abelian} \}, \\
\mc{G}_{\nonab}^{m_{\GL_2} = m}\left( g,(m_1,m_2),H \right) &:= \{ G \in \mc{G}_{\nonab}^{m_{\GL_2} = m}\left( g,(m_1,m_2) \right) : \, H \text{ is the quotient associated to $G$ and $(m_1,m_2)$} \}.
\end{split}
\]
Regarding $m \in \{ 30, 60 \}$, the non-abelian group $H$ above is found to be either $D_3$ or $D_6$, and these groups occur with the frequencies indicated in Table \ref{tableforlevel30} and Table \ref{tableforlevel60}.
\begin{table}[!ht]
\[
\begin{array}{|c||c|c|c|} \hline 
 & & & \\[-.75 em]
 (m_1,m_2) &  
 \left| \, \mc{G}_{\nonab}^{m_{\GL_2} = 30}\left( 0,(m_1,m_2) \right) / \doteq \, \right| & 
 \left| \, \mc{G}_{\nonab}^{m_{\GL_2} = 30}\left( 0,(m_1,m_2), D_3 \right) / \doteq \, \right| & 
 \left| \, \mc{G}_{\nonab}^{m_{\GL_2} = 30}\left( 0,(m_1,m_2), D_6 \right) / \doteq \, \right| \\ 
 & & & \\[-.75 em]
\hline \hline (2,15) & 22 & 22 & 0 \\ 
\hline (3,10) & 20 & 16 & 4 \\ 
\hline (5,6) & 2 & 2 & 0  \\
\hline
\end{array}
\]
\vspace{.1in}
\caption{Frequencies of genus zero non-abelian entanglement groups of level $= 30$}
\label{tableforlevel30}
\end{table}

\begin{table}[!ht]
\[
\begin{array}{|c||c|c|c|} \hline 
& & & \\[-.75 em]
(m_1,m_2) &  
\left| \, \mc{G}_{\nonab}^{m_{\GL_2} = 60}\left( 0,(m_1,m_2) \right) / \doteq \, \right| & 
\left| \, \mc{G}_{\nonab}^{m_{\GL_2} = 60}\left( 0,(m_1,m_2), D_3 \right) / \doteq \, \right| & 
\left| \, \mc{G}_{\nonab}^{m_{\GL_2} = 60}\left( 0,(m_1,m_2), D_6 \right) / \doteq \, \right| \\ 
 & & & \\[-.75 em]
\hline \hline (4,15) & 14 & 0 & 14 \\ 
\hline (3,20) & 14 & 0 & 14 \\ 
\hline (5,12) & 0 & 0 & 0  \\
\hline
\end{array}
\]
\vspace{.1in}
\caption{Frequencies of genus zero non-abelian entanglement groups of level $= 60$}
\label{tableforlevel60}
\end{table}

What can we say about groups $G \in \mc{G}_{\nonab}(0)$ satisfying $m_{\GL_2(G)} > 96$?  As mentioned earlier, we will see that the set
\[
\{ G \in \mc{G}_{\nonab}(0) : m_{\SL_2}(G) = m, \; m_{\GL_2}(G) > m \}
\]
is infinite for some $m \in \mc{L}$ (see Section \ref{infiniteD6family}).  We emphasize that, according to Proposition \ref{keypropositionformainthm}, this does not happen when we restrict to $\mc{G}_{\nonab}^{\max}(0)$, i.e. we have
\[
\{ G \in \mc{G}_{\nonab}^{\max}(0) : m_{\SL_2}(G) < \; m_{\GL_2}(G) \} = \emptyset.
\]

\subsection{Acknowledgements}

The authors would like to thank David Zureick-Brown for insightful conversations and also Jackson Morrow and Harris Daniels for helpful comments on an earlier version of the paper.

\bigskip

\section{An application to counting elliptic curves over $\mbq$ with maximal Galois image modulo a prescribed obstruction} \label{Section:Applications}

In this section we discuss an application of Theorem \ref{mainthm} to the problem of determining which elliptic curves defined over $\mbq$ have Galois image as large as possible relative to a given obstruction, and also of counting elliptic curves with this property.  More precisely, here and throughout the paper, let $E_{\tors} := \bigcup_{m = 1}^{\infty} E[m]$ denote the torsion subgroup of $E$ over $\ol{\mbq}$, let $G_\mbq := \gal(\ol{\mbq}/\mbq)$ denote the absolute Galois group of $\mbq$ and let
\[
\begin{split}
\rho_E : G_{\mbq} &\longrightarrow \aut(E_{\tors}) \simeq \GL_2(\hat{\mbz}), \\
\rho_{E,m} : G_{\mbq} &\longrightarrow \aut(E[m]) \simeq \GL_2(\mbz/m\mbz)
\end{split}
\]
be the Galois representations defined by letting $G_{\mbq}$ act on $E_{\tors}$ (resp. on $E[m]$) and fixing a $\hat{\mbz}$-basis (resp. a $\mbz/m\mbz$-basis) thereof.  Furthermore, let 
$G \subseteq \GL_2(\hat{\mbz})$ be an open subgroup and suppose that there is an elliptic curve $E$ over $\mbq$ satisfying $\rho_E(G_\mbq) \, \dot\subseteq \, G$.  In fact, this will imply that $G$ is \emph{admissible} in the sense of the following definition.
\begin{Definition} \label{defofadmissible}
An open subgroup $G \subseteq \GL_2(\hat{\mbz})$ is called \textbf{admissible} if 
\begin{enumerate}
\item $\det G = \hat{\mbz}^\times$, and
\item $\exists g \in G$ that is $\GL_2(\hat{\mbz})$-conjugate either to $\begin{pmatrix} 1 & 0 \\ 0 & -1 \end{pmatrix}$ or to $\begin{pmatrix} 1 & 1 \\ 0 & -1 \end{pmatrix}$.
\end{enumerate}
\end{Definition}
By considering the Weil pairing and the image under $\rho_E$ of a complex conjugation, we may see that
\[
\exists \text{ an elliptic curve } E/\mbq \text{ with } \rho_E(G_\mbq) \, \dot\subseteq \, G \; \Longrightarrow \; G \text{ is admissible.}
\]
\begin{remark}
Our restriction to considering only elliptic curves defined over $\mbq$ applies only to this section of the paper, and not to other sections. In particular, the computer search associated to Theorem \ref{mainthm} is not restricted to admissible subgroups of $\GL_2(\hat{\mbz})$, and indeed the group $G_{15}$ of \eqref{defofGs} is not admissible, failing each of the conditions in Definition \ref{defofadmissible}.
\end{remark}
\begin{remark}
Definition \ref{defofadmissible} differs slightly from the definition of admissible found in \cite[p. 8]{sutherlandzywina}, wherein it is also demanded that $-I \in G$ and that $G$ be of prime power level. As a consequence of the Hasse-Minkowski theorem, assuming $-I \in G$ and $X_G$ has genus zero, they prove that
\[
\text{ $G$ is admissible and of prime power level} \; \Longrightarrow \; \left| X_G(\mbq) \right| = \infty. 
\]
\end{remark}
Given an admissible open subgroup $G \subseteq \GL_2(\hat{\mbz})$, it is natural to wonder whether or not there exists an elliptic curve $E$ over $\mbq$ satisfying
\begin{equation} \label{canrhoequalG}
\rho_{E}(G_\mbq) \doteq G.
\end{equation}
Because we are working over $\mbq$, classical class field theory motivates the following definition.
\begin{Definition}
We say that a subgroup $G \subseteq \GL_2(\hat{\mbz})$ is \textbf{commutator-thick} if
\[
\left[ G, G \right] = G \cap \SL_2(\hat{\mbz}).
\]
\end{Definition}
Note that, for any subgroup $G \subseteq \GL_2(\hat{\mbz})$, we clearly have $\left[ G, G \right] \subseteq G \cap \SL_2(\hat{\mbz})$, and this containment can be proper (indeed, it is proper even for $G = \GL_2(\hat{\mbz})$)\footnote{Here we are defining the commutator subgroup $[G,G]$ to be the \emph{closure} of the subgroup generated by commutators.}.  Furthermore, it follows from the Kronecker-Weber theorem that, for any elliptic curve $E$ over $\mbq$, the subgroup $\rho_E(G_\mbq) \subseteq \GL_2(\hat{\mbz})$ is commutator-thick.  Indeed, identifying $\rho_E(G_\mbq)$ with $\gal(\mbq(E_{\tors})/\mbq)$, we have
\begin{equation} \label{whyrhosubEofGQiscommutatorthick}
\mbq(\mu_\infty) = \mbq(E_{\tors})^{\rho_E(G_\mbq) \cap \SL_2(\hat{\mbz})} \subseteq \mbq(E_{\tors})^{\left[ \rho_E(G_\mbq), \rho_E(G_\mbq) \right]} \subseteq \mbq^{\ab} = \mbq(\mu_\infty),
\end{equation}
and so we must have equality at each inclusion.  In particular, \eqref{canrhoequalG} can only happen if $G$ is itself  commutator-thick.  In case $G$ is not commutator-thick, we are motivated to consider what it should mean for $\rho_E(G_\mbq) \subseteq G$ to be ``as large as possible.''
Following \cite{jones2}, we make the following definition.
\begin{Definition} \label{commutatormaximaldef}
Let $G \subseteq \GL_2(\hat{\mbz})$.  Given a subgroup $H \subseteq G$, we say that $H$ is \textbf{commutator-maximal in $G$} if 
\[
\left[ H, H \right] = \left[ G, G \right].
\]
If $H \, \dot\subseteq \, G$, we call $H$ commutator-maximal in $G$ just in case $gHg^{-1}$ is commutator-maximal in $G$, for some (any) $g \in \GL_2(\hat{\mbz})$ for which $gHg^{-1} \subseteq G$.
\end{Definition}
We will use commutator-maximality of $H = \rho_E(G_\mbq) \, \dot\subseteq \, G$ to define the concept of $\rho_E(G_\mbq)$ having maximal image inside $G$.  Since we are assuming that $G \subseteq \GL_2(\hat{\mbz})$ is an open subgroup, it follows that $\left[ G, G \right] \subseteq \SL_2(\hat{\mbz})$ is open, which implies that the index of $\left[ G, G \right]$ in $G \cap \SL_2(\hat{\mbz})$ is finite.  As discussed in \cite{jones2}, in case $\det H = \hat{\mbz}^\times$, Definition \ref{commutatormaximaldef} is equivalent to the statement that
\begin{equation} \label{equalityofsomeindices}
\left[ H : G \right] = \left[ [G,G] : G \cap \SL_2(\hat{\mbz}) \right].
\end{equation}
In case $G = \GL_2(\hat{\mbz})$, index on the right-hand side of \eqref{equalityofsomeindices} is $2$; thus in this case $\rho_E(G_\mbq)$ is commutator-maximal in $\GL_2(\hat{\mbz})$ if and only if $\rho_E(G_\mbq)$ has index two inside $\GL_2(\hat{\mbz})$.  An elliptic curve $E$ for which $\left[ \rho_E(G_\mbq) : \GL_2(\hat{\mbz}) \right] = 2$ is typically called a \emph{Serre curve}, and so this motivates the following nomenclature.  
\begin{Definition} \label{defofGSerrecurve}
Let $G \subseteq \GL_2(\hat{\mbz})$ be an admissible open subgroup and suppose that $E$ is an elliptic curve over $\mbq$ that satisfies $\rho_E(G_\mbq) \, \dot\subseteq \, G$.  We call $E$ a \textbf{$G$-Serre curve} if $\rho_E(G_\mbq)$ is commutator-maximal in $G$, in the sense of Definition \ref{commutatormaximaldef}.
\end{Definition} 
\begin{remark}
Definition \ref{defofGSerrecurve} is \emph{stronger} than (and in particular not equivalent to) the condition that $\rho_E(G_{\mbq})$ be maximal among commutator-thick subgroups. For example, there exist elliptic curves $E$ over $\mbq$ for which 
\begin{equation} \label{equalsmbqi}
    \mbq(\sqrt{\gD_E}) = \mbq(i)
\end{equation} 
and with $\rho_E(G_\mbq) \subseteq \GL_2(\hat{\mbz})$ maximal among commutator-thick subgroups, but, since the index two subgroup of $\GL_2(\hat{\mbz})$ corresponding to \eqref{equalsmbqi} is not commutator-thick, none of these elliptic curves will be Serre curves.  (In this case, $\rho_E(G_\mbq)$ must be contained in an index four subgroup of $\GL_2(\hat{\mbz})$.)
\end{remark}
If $E$ is an elliptic curve over $\mbq$ with $\rho_E(G_\mbq) \, \dot\subseteq \, G$, how can we tell whether or not $E$ is a $G$-Serre curve?  We define the following two sets of proper subgroups of $G$:
\begin{equation} \label{defsofSofGandSsupmaxofG}
\begin{split}
\mf{S}(G) :=& \left\{ H \subsetneq G : \, H \text{ is admissible but not commutator-maximal in $G$} \right\}, \\
\mf{S}^{\max}(G) :=& \left\{ H \in \mf{S}(G) : \, \nexists H_1 \in \mf{S}(G) \text{ for which } H \subsetneq H_1 \subsetneq G \right\}.
\end{split}
\end{equation}
Since we obviously have
\begin{equation} \label{ifEisnotaGSerrecurveequivalence}
\text{$E$ is not a $G$-Serre curve} \; \Longleftrightarrow \; \exists H \in \mf{S}^{\max}(G) \text{ for which } \rho_E(G_\mbq) \, \dot\subseteq \, H,
\end{equation}
it is of natural interest to determine the set $\mf{S}^{\max}(G)$.  The following theorem does so, for a particular open subgroup $G \subseteq \GL_2(\hat{\mbz})$.  As we will see, for each $H \in \{ G_6, G_{18} \}$ (the two non-abelian entanglement groups featured in Theorem \ref{mainthm}), we have $H \cap G \in \mf{S}^{\max}(G)$.  To begin with, we will observe that this is not unexpected, since whenever $H \subseteq G$ is a fibered product over a non-abelian quotient and $G$ is merely a fibered product over a cyclic quotient, then $H$ is not commutator-maximal in $G$.  In particular, consider the following two lemmas, where $G_1$ and $G_2$ are finite groups and $\psi_i : G_i \longrightarrow \Gamma_\psi$ are surjective group homomorphisms onto a common quotient group $\Gamma_\psi$.  We let 
\[
G_1 \times_\psi G_2 := \{ (g_1,g_2) \in G_1 \times G_2 : \psi_1(g_1) = \psi_2(g_2) \}
\]
denote the fibered product group.  We note that non-abelian entanglement groups may be defined in terms of fibered products.
\begin{remark} \label{purelygrouptheoreticremark}
By considering Definition \ref{defofnonabelianentanglementgroup} and the Galois correspondence, we may see that an open subgroup $G \subseteq \GL_2(\hat{\mbz})$ is a non-abelian entanglement group if and only if there is a level $m \in \mbn$ that admits a permissible factorization $m = m_1 m_2$ and, under the isomorphism $\GL_2(\mbz/m\mbz) \simeq \GL_2(\mbz/m_1\mbz) \times \GL_2(\mbz/m_2\mbz)$ of the Chinese remainder theorem, we have $G(m) \simeq G(m_1) \times_{\psi} G(m_2)$, where the associated common quotient $\Gamma_\psi$ a non-abelian group.
\end{remark}
\begin{lemma} \label{dissolvesundercommutatorslemma}
With the notation as above, if the group $\Gamma_\psi$ is cyclic, then we have
\[
\left[ G_1 \times_\psi G_2, G_1 \times_\psi G_2 \right] = \left[ G_1, G_1 \right] \times \left[ G_2, G_2 \right].
\]
\end{lemma}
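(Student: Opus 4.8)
The plan is to prove the two inclusions separately; the hypothesis that $\Gamma_\psi$ is cyclic (hence abelian) is needed for only one of them. Throughout I write $P := G_1 \times_\psi G_2$ and $N_i := \ker \psi_i$.

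First, the inclusion $\left[ P, P \right] \subseteq \left[ G_1, G_1 \right] \times \left[ G_2, G_2 \right]$ needs no hypothesis on $\Gamma_\psi$: since $P$ is a subgroup of $G_1 \times G_2$, its commutator subgroup satisfies $\left[ P, P \right] \subseteq \left[ G_1 \times G_2, G_1 \times G_2 \right] = \left[ G_1, G_1 \right] \times \left[ G_2, G_2 \right]$, the last equality being the standard computation of the commutator subgroup of a direct product (a commutator of $(a,b),(c,d)$ is $([a,c],[b,d])$).

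For the reverse inclusion, since $\left[ P, P \right]$ is a subgroup it suffices to show that both $\left[ G_1, G_1 \right] \times \{1\}$ and $\{1\} \times \left[ G_2, G_2 \right]$ lie in $\left[ P, P \right]$; by symmetry I treat only the second. As $\Gamma_\psi$ is abelian, $\psi_i(\left[ G_i, G_i \right]) = 1$, so $\left[ G_i, G_i \right] \subseteq N_i$; in particular every $(1, [a_2,b_2])$ already belongs to $P$, and the task is to realize it as a commutator of elements of $P$. Given $a_2, b_2 \in G_2$, set $\alpha := \psi_2(a_2)$ and $\beta := \psi_2(b_2)$. This is where cyclicity enters: writing $\Gamma_\psi = \langle \gamma \rangle$, choosing any $g \in G_1$ with $\psi_1(g) = \gamma$, and taking exponents $s,t$ with $\gamma^s = \alpha$, $\gamma^t = \beta$, the elements $c_1 := g^s$ and $d_1 := g^t$ of $G_1$ commute (being powers of $g$) and satisfy $\psi_1(c_1) = \alpha = \psi_2(a_2)$ and $\psi_1(d_1) = \beta = \psi_2(b_2)$. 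Hence $(c_1, a_2), (d_1, b_2) \in P$, and
\[
\left[ (c_1, a_2), (d_1, b_2) \right] = \left( [c_1, d_1], [a_2, b_2] \right) = \left( 1, [a_2, b_2] \right).
\]
Since the elements $(1, [a_2, b_2])$ generate $\{1\} \times \left[ G_2, G_2 \right]$, this yields $\{1\} \times \left[ G_2, G_2 \right] \subseteq \left[ P, P \right]$, and the symmetric argument (lifting a generator of $\Gamma_\psi$ into $G_2$) gives $\left[ G_1, G_1 \right] \times \{1\} \subseteq \left[ P, P \right]$. Combining the two inclusions yields the claimed equality.

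The only real obstacle is the reverse inclusion, and the device above is exactly what clears it: over a cyclic quotient any two elements of $\Gamma_\psi$ admit \emph{simultaneously commuting} lifts to $G_1$ — powers of a single lift of a generator — which lets us cancel the unwanted coordinate of a diagonal commutator. Cyclicity genuinely cannot be weakened to abelianness of $\Gamma_\psi$; for instance, with $G_1 = G_2 = Q_8$ and $\Gamma_\psi = (\mbz/2\mbz)^2$ the two coordinates of every commutator in $P$ remain coupled, so $\left[ P, P \right]$ is strictly smaller than $\left[ G_1, G_1 \right] \times \left[ G_2, G_2 \right]$ and the conclusion fails. This coupling over non-abelian quotients is precisely the entanglement phenomenon the paper exploits.
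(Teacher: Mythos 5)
Your proof is correct, but it takes a different route from the paper: the paper disposes of this lemma with a one-line citation to \cite[Lemma 1, p.~174]{langtrotter}, whereas you give a short self-contained argument. Your two halves are exactly right. The easy inclusion $[P,P]\subseteq [G_1,G_1]\times[G_2,G_2]$ uses nothing about $\Gamma_\psi$, and for the reverse inclusion your key device — lifting a generator $\gamma$ of $\Gamma_\psi$ to a single element $g\in G_1$ and using the \emph{commuting} lifts $g^s, g^t$ of $\alpha=\gamma^s$, $\beta=\gamma^t$ to kill the first coordinate of the commutator $\bigl[(g^s,a_2),(g^t,b_2)\bigr]=\bigl(1,[a_2,b_2]\bigr)$ — is precisely the mechanism underlying the classical Lang--Trotter lemma, so in substance you have reconstructed the cited proof rather than merely invoked it. (Note the lemma is stated for finite groups, so the topological closure in the paper's definition of $[\,\cdot\,,\cdot\,]$ for subgroups of $\GL_2(\hat{\mbz})$ causes no issue here.) What your write-up buys beyond the citation is the explicit diagnosis of where cyclicity is used, together with the $Q_8$ example over $\Gamma_\psi\simeq(\mbz/2\mbz)^2$ showing the conclusion already fails for abelian non-cyclic quotients: there every commutator in $P$ has coupled coordinates, equal to $(1,1)$ or $(-1,-1)$, so $[P,P]$ is the diagonal of $\{\pm 1\}\times\{\pm 1\}$ and is proper. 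That example is a genuine addition — the paper's companion Lemma \ref{doesntdissolvelemma} only treats non-abelian $\Gamma_\psi$, so the abelian non-cyclic regime is not addressed there — and it correctly locates the boundary of the lemma's hypothesis.
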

\begin{proof}
This follows from \cite[Lemma 1, p. 174]{langtrotter}.
\end{proof}
By contrast, we have
\begin{lemma} \label{doesntdissolvelemma}
With the notation as above, if the group $\Gamma_\psi$ is non-abelian, then
\[
\left[ G_1 \times_\psi G_2, G_1 \times_\psi G_2 \right] \subsetneq  \left[ G_1, G_1 \right] \times \left[ G_2, G_2 \right].
\]
\end{lemma}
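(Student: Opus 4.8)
The plan is to dispose of the (routine) inclusion first and then isolate a single element witnessing proper containment. For the inclusion, I would note that for any two elements $(g_1,g_2),(h_1,h_2) \in G_1 \times_\psi G_2$ one has $[(g_1,g_2),(h_1,h_2)] = ([g_1,h_1],[g_2,h_2])$, whose two coordinates lie in $[G_1,G_1]$ and $[G_2,G_2]$ respectively. Since such commutators generate $[G_1 \times_\psi G_2, G_1 \times_\psi G_2]$, the inclusion $[G_1 \times_\psi G_2, G_1 \times_\psi G_2] \subseteq [G_1,G_1] \times [G_2,G_2]$ follows immediately, and this direction requires no hypothesis on $\Gamma_\psi$ (so it applies equally in the setting of Lemma~\ref{dissolvesundercommutatorslemma}).

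The crux is properness, and here the key structural observation is that the commutator subgroup $C := [G_1 \times_\psi G_2, G_1 \times_\psi G_2]$ is itself contained in the fibered product $G_1 \times_\psi G_2$, so \emph{every} element $(c_1,c_2) \in C$ satisfies the defining fiber condition $\psi_1(c_1) = \psi_2(c_2)$. By contrast, the larger group $[G_1,G_1]\times[G_2,G_2]$ need not lie inside the fibered product at all, precisely because $\psi_1$ and $\psi_2$ can be nontrivial on the respective commutator subgroups. I would exploit exactly this asymmetry. Since $\psi_1$ is surjective we have $\psi_1\!\left([G_1,G_1]\right) = [\Gamma_\psi,\Gamma_\psi]$, and because $\Gamma_\psi$ is non-abelian this image is nontrivial; hence there exists $n_1 \in [G_1,G_1]$ with $\psi_1(n_1) \neq 1$. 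Then $(n_1,1)$ lies in $[G_1,G_1]\times[G_2,G_2]$ (as $1 \in [G_2,G_2]$), but it cannot lie in $C$: if it did, the fiber condition satisfied by elements of $C$ would force $\psi_1(n_1) = \psi_2(1) = 1$, a contradiction. This exhibits an element of $[G_1,G_1]\times[G_2,G_2]$ outside $C$ and establishes the strict inclusion.

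There is no substantial obstacle in this argument; the only points requiring care are the two facts driving it, namely that $C$ respects the fiber condition (automatic once one observes $C \subseteq G_1\times_\psi G_2$, i.e. that the commutator subgroup of a subgroup stays inside that subgroup) and that surjectivity of $\psi_1$ upgrades $\psi_1([G_1,G_1]) \subseteq [\Gamma_\psi,\Gamma_\psi]$ to an equality, so that non-abelianness of $\Gamma_\psi$ genuinely produces an element $n_1$ with $\psi_1(n_1)\neq 1$. It is worth flagging explicitly that $[G_1,G_1]\times[G_2,G_2]$ is \emph{not} in general a subgroup of $G_1\times_\psi G_2$, since the comparison in the lemma is between $C$ and this possibly-larger ambient direct product, and it is exactly the failure of the fiber condition on $[G_1,G_1]\times[G_2,G_2]$ that creates the gap measuring how the non-abelian entanglement survives the passage to commutators.
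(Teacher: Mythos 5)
Your proof is correct and is essentially the same argument as the paper's: both rest on the fact that surjectivity of $\psi_i$ gives $\psi_i\left([G_i,G_i]\right) = [\Gamma_\psi,\Gamma_\psi] \neq \{1\}$, so the commutator subgroup of $G_1 \times_\psi G_2$ is trapped inside a fibered product over a nontrivial group and hence cannot be all of $[G_1,G_1]\times[G_2,G_2]$. The only difference is presentational — the paper states this containment abstractly, while you instantiate the explicit witness $(n_1,1)$ violating the fiber condition — which is a harmless (indeed clarifying) unpacking of the same idea.
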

\begin{proof}
Since $\Gamma_\psi$ is non-abelian, we have $[\Gamma_\psi,\Gamma_\psi] \neq \{ 1 \}$.  Since each $\psi_i$ is onto, we have
\[
\{ 1 \} \neq [\Gamma_\psi, \Gamma_\psi] = \psi_i \left( [G_i, G_i] \right) \subseteq \Gamma_\psi \quad\quad \left( i \in \{1, 2 \} \right),
\]
and so the commutator subgroup 
\[
\left[ G_1 \times_\psi G_2, G_1 \times_\psi G_2 \right] \subseteq \left[ G_1, G_1 \right] \times_{\psi} \left[ G_2, G_2 \right] 
\]
is contained in a fibered product over the non-trivial group $[\Gamma_\psi,\Gamma_\psi]$, and is therefore a proper subgroup of $\left[ G_1, G_1 \right] \times \left[ G_2, G_2 \right]$.
\end{proof}
Combining Lemma \ref{dissolvesundercommutatorslemma} with Lemma \ref{doesntdissolvelemma}, we obtain the following corollary.
\begin{cor} \label{nonabentanglementcorollary}
Let $G = G_1 \times_{\psi} G_2$ be a fibered product over a \emph{cyclic} group $\Gamma_\psi$ and, for each $i \in \{ 1, 2 \}$, let $H_i \subseteq G_i$ be a subgroup.  Suppose that $H \subseteq G$ is a subgroup of the form $H = H_1 \times_{\phi} H_2$, where each $\phi_i : H_i \rightarrow \Gamma_\phi$ is a surjective homomorphism onto a \emph{non-abelian} group $\Gamma_\phi$.  Then $H$ is \emph{not} commutator-maximal in $G$.
\end{cor}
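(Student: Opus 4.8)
The plan is to prove the non-equality $[H,H] \neq [G,G]$ directly, by sandwiching $[H,H]$ \emph{strictly} inside $[G,G]$. The two preceding lemmas supply the two ends of a chain, and the hypothesis $H_i \subseteq G_i$ supplies the bridge between them. First I would invoke \lemref{dissolvesundercommutatorslemma} on the ambient group: since $\Gamma_\psi$ is cyclic (hence, in particular, abelian), the lemma identifies the commutator of the fibered product with the full direct product of the factor commutators, giving
\[
[G,G] = [G_1,G_1] \times [G_2,G_2].
\]

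Next I would invoke \lemref{doesntdissolvelemma} on the subgroup: since $\Gamma_\phi$ is non-abelian, the commutator of $H = H_1 \times_\phi H_2$ fails to fill out the full direct product, so
\[
[H,H] \subsetneq [H_1,H_1] \times [H_2,H_2].
\]
To close the gap between these two displays I would use that commutator subgroups are monotone under subgroup inclusion: from $H_i \subseteq G_i$ one obtains $[H_i,H_i] \subseteq [G_i,G_i]$ for each $i$, and hence $[H_1,H_1] \times [H_2,H_2] \subseteq [G,G]$ by the first display. Composing a strict inclusion with an inclusion yields a strict inclusion, so $[H,H] \subsetneq [G,G]$; in particular $[H,H] \neq [G,G]$, which by \defnref{commutatormaximaldef} means precisely that $H$ is not commutator-maximal in $G$.

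There is no serious obstacle here, as the corollary is essentially the concatenation of the two lemmas; the one point requiring a moment's care is the logical step that $A \subsetneq B$ together with $B \subseteq C$ forces $A \subsetneq C$ (were $A$ equal to $C$, then $B$ would be squeezed between $A$ and $A$, contradicting strictness). I would also note in passing that, since all groups appearing in the corollary are finite, the closure convention in the definition of the commutator subgroup plays no role, so the equalities and inclusions above are literal set-theoretic ones.
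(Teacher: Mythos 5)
Your proof is correct and is exactly the argument the paper intends: the paper derives this corollary in one line by combining Lemma~\ref{dissolvesundercommutatorslemma} and Lemma~\ref{doesntdissolvelemma}, and your write-up simply makes explicit the chain $[H,H] \subsetneq [H_1,H_1] \times [H_2,H_2] \subseteq [G_1,G_1] \times [G_2,G_2] = [G,G]$ together with the monotonicity of commutator subgroups and the strictness bookkeeping. Your remark that finiteness of the groups makes the closure convention irrelevant is also accurate, since the lemmas preceding the corollary are stated for finite groups.
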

We now take $G_1 := \GL_2(\mbz/2\mbz)$ and $\ds G_2 := \left\{ \begin{pmatrix} * & * \\ 0 & * \end{pmatrix} \right\} \subseteq \GL_2(\mbz/3\mbz)$.  We let $\gamma := \begin{pmatrix} 1 & 1 \\ 1 & 0 \end{pmatrix} \in \GL_2(\mbz/2\mbz)$, and define the maps $\psi_2$ and $\psi_3$ as follows:
\[
\begin{tikzcd}
\psi_2 : \GL_2(\mbz/2\mbz) \rar{\can} & \frac{\GL_2(\mbz/2\mbz)}{\left\langle \gamma \right\rangle} \rar{\simeq} & \{ \pm 1 \}, \\
\psi_3 : \left\{ \begin{pmatrix} * & * \\ 0 & * \end{pmatrix} \right\} \rar{\det} & (\mbz/3\mbz)^\times \rar{\simeq} & \{ \pm 1 \}.
\end{tikzcd}
\]
We define the index eight subgroup $G(6) \subseteq \GL_2(\mbz/6\mbz)$ to be the fibered product
\begin{equation} \label{defofGof6}
G(6) := \GL_2(\mbz/2\mbz) \times_\psi \left\{ \begin{pmatrix} * & * \\ 0 & * \end{pmatrix} \right\}
\end{equation}
and define $G := \pi_{\GL_2}^{-1}(G(6)) \subseteq \GL_2(\hat{\mbz})$ to be the associated open subgroup.  
Let $E$ be an elliptic curve satisfying $\rho_E(G_\mbq) \, \dot\subseteq \, G$.
Our next theorem determines precisely the conditions under which $E$ is a $G$-Serre curve\footnote{We have $\left[ G \cap \SL_2(\hat{\mbz}) : [G,G] \right] = 2$, and thus, $E$ is a $G$-Serre curve if and only if $\rho_E(G_\mbq)$ is an index two subgroup of $G$.}.  First, let us denote by 
\[
\begin{split}
    \borel(\ell) :=& \left\{ \begin{pmatrix} * & * \\ 0 & * \end{pmatrix} \right\} \subseteq \GL_2(\mbz/\ell\mbz), \quad\quad\quad\quad\quad\quad\quad\quad\quad\;\; \left( \ell \text{ prime} \right), \\
    \mc{N}_{\ns}(\ell) :=& \left\{ \begin{pmatrix} x & -y \\ y & x \end{pmatrix} \right\} \cup \left\{ \begin{pmatrix} x & y \\ y & -x \end{pmatrix} \right\} \subseteq \GL_2(\mbz/\ell\mbz) \quad\quad \left( \ell \geq 3 \text{ prime} \right)
\end{split}
\]
respectively, the Borel subgroup and the Normalizer of a non-split Cartan subgroup of $\GL_2(\mbz/\ell\mbz)$.  Next, we define the following subgroups of $\GL_2(\hat{\mbz})$.
\begin{equation} \label{GSerrecurvedetectionsubgroups}
\begin{split}
G_{2,1} :=& \left\{ g \in \GL_2(\hat{\mbz}) : g \mod 2 \in \borel(2) \right\}, \\
G_{3,1} :=& \left\{ g \in \GL_2(\hat{\mbz}) : g \mod 3 \in \mc{N}_{\ns}(3) \right\}, \\
G_{4,1} :=& \left\{ g \in \GL_2(\hat{\mbz}) : g \mod 4 \in \left\langle \begin{pmatrix} 1 & 1 \\ 1 & 2 \end{pmatrix}, \begin{pmatrix} 0 & 1 \\ 3 & 0 \end{pmatrix}, \begin{pmatrix} 1 & 1 \\ 0 & 3 \end{pmatrix} \right\rangle \right\}, \\
G_{6,1} :=& \left\{ g \in \GL_2(\hat{\mbz}) : g \mod 6 \in \left\langle \begin{pmatrix} 1 & 1 \\ 0 & 5 \end{pmatrix}, \begin{pmatrix} 5 & 1 \\ 3 & 2 \end{pmatrix}, \begin{pmatrix} 5 & 4 \\ 4 & 1 \end{pmatrix} \right\rangle \right\}, \\
G_{9,1} :=& \left\{ g \in \GL_2(\hat{\mbz}) : g \mod 9 \in \left\langle \begin{pmatrix} 4 & 2 \\ 3 & 4 \end{pmatrix}, \begin{pmatrix} 2 & 0 \\ 0 & 5 \end{pmatrix}, \begin{pmatrix} 1 & 0 \\ 0 & 2 \end{pmatrix} \right\rangle \right\}, \\
G_{9,2} :=& \left\{ g \in \GL_2(\hat{\mbz}) : g \mod 9 \in \left\langle \begin{pmatrix} 1 & 1 \\ 0 & 1 \end{pmatrix}, \begin{pmatrix} 2 & 0 \\ 0 & 5 \end{pmatrix}, \begin{pmatrix} 1 & 0 \\ 0 & 2 \end{pmatrix} \right\rangle \right\}, \\
G_{9,3} :=& \left\{ g \in \GL_2(\hat{\mbz}) : g \mod 9 \in \left\langle \begin{pmatrix} 2 & 2 \\ 0 & 4 \end{pmatrix}, \begin{pmatrix} 4 & 7 \\ 0 & 8 \end{pmatrix}, \begin{pmatrix} 5 & 4 \\ 3 & 4 \end{pmatrix} \right\rangle \right\}, \\
G_{18,1} :=& \left\{ g \in \GL_2(\hat{\mbz}) : g \mod 18 \in \left\langle \begin{pmatrix} 7 & 17 \\ 0 & 5 \end{pmatrix}, \begin{pmatrix} 17 & 3 \\ 3 & 14 \end{pmatrix}, \begin{pmatrix} 4 & 3 \\ 3 & 14 \end{pmatrix} \right\rangle \right\}, \\
G_{18,2} :=& \left\{ g \in \GL_2(\hat{\mbz}) : g \mod 18 \in \left\langle \begin{pmatrix} 1 & 10 \\ 3 & 11 \end{pmatrix}, \begin{pmatrix} 16 & 3 \\ 9 & 8 \end{pmatrix}, \begin{pmatrix} 11 & 4 \\ 12 & 11 \end{pmatrix} \right\rangle \right\}, \\
G_{18,3} :=& \left\{ g \in \GL_2(\hat{\mbz}) : g \mod 18 \in \left\langle \begin{pmatrix} 16 & 9 \\ 9 & 8 \end{pmatrix}, \begin{pmatrix} 5 & 16 \\ 6 & 5 \end{pmatrix}, \begin{pmatrix} 7 & 13 \\ 3 & 10 \end{pmatrix} \right\rangle \right\}.
\end{split}
\end{equation}
By \eqref{ifEisnotaGSerrecurveequivalence}, we see that, when $E$ is not a $G$-Serre curve, $\rho_E(G_\mbq) \, \dot\subseteq \, H$ for some $H \in \mf{S}^{\max}(G)$.  Assuming that such a group $H$ satisfies $H(\ell) = \GL_2(\mbz/\ell\mbz)$ for each $\ell \notin \{ 2, 3 \}$, \cite[Theorem 2.7 \& Remark 2.8]{jones2} establishes in this case that $m_{GL_2}(H)$ must divide $216$.  Thus, it becomes a finite search to determine the set $\mf{S}^{\max}(G)$, and, carrying out this computation, we arrive at the following theorem.
\begin{Theorem} \label{applicationthm}
Let $G(6) \subseteq \GL_2(\mbz/6\mbz)$ be the index two subgroup defined by \eqref{defofGof6} and let $G = \pi_{\GL_2}^{-1}(G(6))$ be the associated open subgroup of $\GL_2(\hat{\mbz})$.  For each elliptic curve $E$ over $\mbq$ for which $\rho_E(G_\mbq) \, \dot\subseteq \, G$, we have that $E$ is \emph{not} a $G$-Serre curve if and only if
\begin{enumerate}
    \item there exists a group $G_{i,j}$ appearing in \eqref{GSerrecurvedetectionsubgroups} for which  $\rho_E(G_\mbq) \, \dot\subseteq \, G_{i,j}$, or
    \item there exists a prime $\ell \geq 5$ for which $\rho_{E,\ell}(G_\mbq) \neq \GL_2(\mbz/\ell\mbz)$.
\end{enumerate}
\end{Theorem}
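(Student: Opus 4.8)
The plan is to leverage the equivalence \eqref{ifEisnotaGSerrecurveequivalence}, which reduces the theorem to understanding the set $\mf{S}^{\max}(G)$ of maximal admissible, non-commutator-maximal proper subgroups of $G$: the curve $E$ fails to be a $G$-Serre curve precisely when $\rho_E(G_\mbq) \, \dot\subseteq \, H$ for some $H \in \mf{S}^{\max}(G)$. First I would split any witnessing $H$ according to whether or not $H(\ell) = \GL_2(\mbz/\ell\mbz)$ for every prime $\ell \geq 5$. The key structural fact I will use throughout is that, since $G = \pi_{\GL_2}^{-1}(G(6))$ has $\GL_2$-level $6$, we have $G(\ell) = \GL_2(\mbz/\ell\mbz)$ and $[G,G]$ surjecting onto $\SL_2(\mbz/\ell\mbz)$ for each $\ell \geq 5$ (using that $\SL_2(\mbz/\ell\mbz)$ is perfect for such $\ell$).

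The case producing condition (2) is the more direct one, and I would handle it without appeal to the finite search. Suppose $\rho_{E,\ell}(G_\mbq) \subsetneq \GL_2(\mbz/\ell\mbz)$ for some prime $\ell \geq 5$. Because the determinant (the mod-$\ell$ cyclotomic character) is surjective, $\rho_{E,\ell}(G_\mbq)$ has full determinant, whence $[\rho_{E,\ell}(G_\mbq) : \rho_{E,\ell}(G_\mbq) \cap \SL_2(\mbz/\ell\mbz)] = \ell - 1 = [\GL_2(\mbz/\ell\mbz) : \SL_2(\mbz/\ell\mbz)]$; the proper containment then forces $\rho_{E,\ell}(G_\mbq) \cap \SL_2(\mbz/\ell\mbz) \subsetneq \SL_2(\mbz/\ell\mbz)$. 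Since $E/\mbq$ is commutator-thick, $[\rho_E(G_\mbq), \rho_E(G_\mbq)] = \rho_E(G_\mbq) \cap \SL_2(\hat{\mbz})$ reduces mod $\ell$ into this proper subgroup, whereas $[G,G]$ reduces onto all of $\SL_2(\mbz/\ell\mbz)$; hence $[\rho_E(G_\mbq), \rho_E(G_\mbq)] \neq [G,G]$ and $E$ is not a $G$-Serre curve. Conversely, if the witnessing $H$ is deficient at some $\ell \geq 5$, then $\rho_E(G_\mbq) \, \dot\subseteq \, H$ already yields $\rho_{E,\ell}(G_\mbq) \subsetneq \GL_2(\mbz/\ell\mbz)$, which is exactly (2).

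For the complementary, full-at-$\geq 5$ case I would invoke \cite[Theorem 2.7 \& Remark 2.8]{jones2}: any $H \in \mf{S}(G)$ with $H(\ell) = \GL_2(\mbz/\ell\mbz)$ for all $\ell \notin \{2,3\}$ satisfies $m_{\GL_2}(H) \mid 216$, so $H = \pi_{\GL_2}^{-1}(H(216))$ is determined by its image in $\GL_2(\mbz/216\mbz) \simeq \GL_2(\mbz/8\mbz) \times \GL_2(\mbz/27\mbz)$. This renders the determination finite: enumerate the subgroups $H \subseteq G$ of level dividing $216$ up to $\GL_2(\hat{\mbz})$-conjugacy, keep those that are admissible and satisfy $[H,H] \subsetneq [G,G]$, and extract the maximal elements under inclusion. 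The output is precisely the ten groups $G_{i,j}$ of \eqref{GSerrecurvedetectionsubgroups}. One checks that no group deficient at some $\ell \geq 5$ can contain a group whose $\ell$-image is already full, so each $G_{i,j}$ is genuinely maximal in all of $\mf{S}(G)$; consequently $\rho_E(G_\mbq) \, \dot\subseteq \, G_{i,j}$ forces $[\rho_E(G_\mbq), \rho_E(G_\mbq)] \subseteq [G_{i,j}, G_{i,j}] \subsetneq [G,G]$, giving non-$G$-Serre-ness, while a full-at-$\geq 5$ witness $H$ must itself be one of the $G_{i,j}$, giving condition (1).

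The main obstacle is the finite search together with the verification that its output is exactly the list \eqref{GSerrecurvedetectionsubgroups}. Concretely, one must enumerate conjugacy classes of subgroups of $\GL_2(\mbz/8\mbz) \times \GL_2(\mbz/27\mbz)$, correctly implement the admissibility test and the commutator comparison $[H,H]$ versus $[G,G]$, and then prune to maximal elements. The delicate bookkeeping is distinguishing the three level-$9$ groups $G_{9,1}, G_{9,2}, G_{9,3}$ and the three level-$18$ groups $G_{18,1}, G_{18,2}, G_{18,3}$ up to $\GL_2(\hat{\mbz})$-conjugacy, and confirming that the list of ten is both complete and irredundant — in particular that no two coincide up to conjugacy and that none lies inside another element of $\mf{S}(G)$.
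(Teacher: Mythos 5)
Your overall route is the paper's: reduce via \eqref{ifEisnotaGSerrecurveequivalence} to determining $\mf{S}^{\max}(G)$, split according to whether the witness $H$ is full at every prime $\ell \geq 5$, quote \cite[Theorem 2.7 \& Remark 2.8]{jones2} to bound the $\GL_2$-level of the full-at-$\geq 5$ witnesses by $216$, and resolve the resulting finite problem by machine search. Your self-contained treatment of branch (2) — full determinant plus properness forces the mod-$\ell$ $\SL_2$-part to be proper, and commutator-thickness of $\rho_E(G_\mbq)$ then kills commutator-maximality since $[G,G]$ reduces onto $\SL_2(\mbz/\ell\mbz)$ — is correct and usefully expands what the paper leaves implicit.

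However, one step in your reverse direction is false as written: the chain $[\rho_E(G_\mbq),\rho_E(G_\mbq)] \subseteq [G_{i,j},G_{i,j}] \subsetneq [G,G]$ presumes $G_{i,j} \subseteq G$, which fails for most of the groups in \eqref{GSerrecurvedetectionsubgroups}. Concretely, $G_{2,1}$ carries no mod-$3$ condition, so $[G_{2,1},G_{2,1}]$ surjects mod $3$ onto $\SL_2(\mbz/3\mbz)$, whereas $[G,G]$ reduces mod $3$ into the commutator subgroup of the Borel, the order-$3$ unipotent group; hence $[G_{2,1},G_{2,1}] \nsubseteq [G,G]$. The elements of $\mf{S}^{\max}(G)$ are the intersections $G_{i,j} \cap G$ (see Remark \ref{maximalsubgroupsremark}), and the correct comparison is between $[\rho_E(G_\mbq),\rho_E(G_\mbq)] = \rho_E(G_\mbq) \cap \SL_2(\hat{\mbz})$ and $[G,G]$ up to conjugacy: for instance, if $\rho_E(G_\mbq) \, \dot\subseteq \, G_{2,1}$, then the mod-$2$ image of $\rho_E(G_\mbq) \cap \SL_2(\hat{\mbz})$ has order at most $2$, while $[G,G]$ mod $2$ is $A_3$ of order $3$, so no conjugate equality is possible and $E$ is not a $G$-Serre curve. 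Since you in any case defer the determination of the ten groups to the finite computation — which is also where the paper certifies the failure of commutator-maximality of each $G_{i,j} \cap G$ in $G$, along with the simultaneous-conjugacy bookkeeping implicit in phrasing condition (1) with $G_{i,j}$ rather than $G_{i,j} \cap G$ — this is a repairable local error rather than a wrong approach.
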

\begin{remark}
The subgroups $G_{6,1}$ and $G_{18,1}$ of \eqref{GSerrecurvedetectionsubgroups} are the subgroups $G_6$ and $G_{18}$, respectively, that appear in Theorem \ref{mainthm}.  In particular, Theorem \ref{applicationthm} highlights the role played by non-abelian entanglement groups in this problem.  The group $G_{4,1}$ has appeared in various previous papers (see \cite{jonesaa}, \cite{dokchitser} and \cite{sutherlandzywina}); the groups $G_{9,1}$ and $G_{9,2}$ correspond, respectively, to the curves labeled $9C^ 0-9a$ and $9B^0-9a$ in the Table $1$ of \cite{sutherlandzywina}.
\end{remark}
\begin{remark} \label{maximalsubgroupsremark}
In the language introduced in \eqref{defsofSofGandSsupmaxofG}, we have that
\[
\mf{S}^{\max}(G) = \{ H \cap G : H = G_{i,j} \text{ as in \eqref{GSerrecurvedetectionsubgroups}} \} \cup \bigcup_{\ell \geq 5} \{ H \cap G :  H \in \mf{S}^{\max}(\ell) \}, 
\]
where the set $\mf{S}^{\max}(\ell)$ is defined as follows:  we let $\mf{S}(\ell)$ denote the set of all admissible open subgroups $H \subseteq \GL_2(\hat{\mbz})$ for which $H(\ell) \neq \GL_2(\mbz/\ell\mbz)$ and define $\mf{S}^{\max}(\ell) \subseteq \mf{S}(\ell)$ to be the subset of those $H \in \mf{S}(\ell)$ that are maximal with respect to subset inclusion. The genera of the modular curves $X_{G_{i,j}}$ and $X_{G \cap G_{i,j}}$ associated to each of the groups $G_{i,j}$ in \eqref{GSerrecurvedetectionsubgroups} are listed in Table \ref{tableofgenera}.
\begin{table}
\[
\begin{array}{|c||c|c|c|c|c|c|c|c|c|c|} \hline G_{i,j} \text{ from } \eqref{GSerrecurvedetectionsubgroups} & G_{2,1} & G_{3,1} & G_{4,1} & G_{6,1} & G_{9,1} & G_{9,2} & G_{9,3} & G_{18,1} & G_{18,2} & G_{18,3} \\ 
\hline \hline \genus(X_{\tilde{G}_{i,j}}) & 0 & 0 & 0 & 0 & 0 & 0 & 1 & 0 & 1 & 2 \\ 
\hline \genus(X_{\tilde{G} \cap \tilde{G}_{i,j}}) & 0 & 1 & 1 & 0 & 0 & 1 & 2 & 0 & 1 & 2 \\ 
\hline
\end{array}
\]
\vspace{.1in}
\caption{Genera of modular curves associated to $G_{i,j}$ from \eqref{GSerrecurvedetectionsubgroups} }
\label{tableofgenera}
\end{table}

\end{remark}
We now turn to the question:  In case $\rho_E(G_\mbq) \, \dot\subseteq \, G$, how \emph{likely} is it that $E$ is a $G$-Serre curve?  More precisely, assume for simplicity that $-I \in G$, and suppose that the modular curve $X_G$ has genus zero and that $X_G(\mbq) \neq \emptyset$.  Projecting from any rational point then yields a \emph{generic} point in $X_G\left( \mbq(t) \right)$, whose specializations give rise to all points in $X_G(\mbq)$.  Applying the forgetful map to the $j$-line
\[
j_G : \mbp^1_\mbq(t) \simeq X_G \longrightarrow X(1) \simeq \mbp^1_\mbq(j),
\]
we may then construct a Weierstrass model $\mc{E}$ defined over $\mbq(t)$:
\[
\mc{E} : y^2 = x^3 + a(t) x + b(t) \quad\quad \left( a(t), b(t) \in \mbq(t) \right),
\]
with $j$-invariant $j_{\mc{E}}(t) = j_G(t)$.  The generic Galois representation
\[
\rho_{\mc{E}} : G_{\mbq(t)} \longrightarrow \GL_2(\hat{\mbz})
\]
satisfies $\rho_{\mc{E}}(G_{\mbq(t)}) \, \dot\subseteq\, G$.  One can show independently that in fact, $\rho_{\mc{E}}(G_{\mbq(t)}) \doteq G$, but this may also be deduced from the following argument.

We are interested in understanding the nature of the specializations of $\mc{E}$ and their associated Galois representations.  More precisely, let $\gD_{\mc{E}}(t) \in \mbq(t)$ denote the discriminant of $\mc{E}$ and define the finite subset $B_{\mc{E}} \subseteq \mbq$ by
\begin{equation} \label{defofBE}
B_{\mc{E}} := \left\{ t_0 \in \mbq : a(t) \text{ or } b(t) \text{ is not regular at $t_0$}, \text{ or } \gD_{\mc{E}}(t_0) = 0, \text{ or } j_{\mc{E}}(t_0) \in \{ 0, 1728 \} \right\}.
\end{equation}
For each $t_0 \in \mbq - B_{\mc{E}}$, we denote by $\mc{E}_{t_0}$ the specialized Weierstrass model
$
y^2 = x^3 + a(t_0) x + b(t_0),
$
which is an elliptic curve over $\mbq$.  
We always have $\rho_{\mc{E}_{t_0}}(G_\mbq) \, \dot\subseteq \, G$, and we would like to ask how \emph{often} a specialization $\mc{E}_{t_0}$ is a $G$-Serre curve.  More precisely,
for $t_0 \in \mbq$, we denote by
\[
H(t_0) := \max \left\{ \left| x_0 \right|, \left| y_0 \right| : t_0 = \frac{x_0}{y_0} \text{ in lowest terms} \right\}.
\]
A standard exercise in analytic number theory (see for instance \cite[Theorem 3.9]{apostol}) shows that
\begin{equation} \label{asymptoticforrationalsofboundedheight}
\left| \{ t_0 \in \mbq : H(t_0) \leq T \} \right| \sim \frac{1}{2\zeta(2)} T^2 \quad \text{ as } \quad T \longrightarrow \infty.
\end{equation}
It is reasonable to ask whether the ratio
\[
\frac{\left| \left\{ t_0 \in \mbq - B_{\mc{E}} : H(t_0) \leq T, \, \mc{E}_{t_0} \text{ is a $G$-Serre curve} \right\} \right|}{\left| \left\{ t_0 \in \mbq - B_{\mc{E}} : H(t_0) \leq T \right\} \right|}
\]
tends to $1$ as $T \rightarrow \infty$.  As discussed in \cite{jones2} (see also \cite{cogrjo}), this is indeed the case; in the spirit of \cite{grant} and \cite{cogrjo}, one might want to compute an asymptotic formula as $T \rightarrow \infty$ for the size of the truncated exceptional set 
\begin{equation} \label{theexceptionalset}
\mc{S}(T) := \left\{ t_0 \in \mbq - B_{\mc{E}} : H(t_0) \leq T, \, \mc{E}_{t_0} \text{ is not a $G$-Serre curve} \right\}.
\end{equation}
By \eqref{ifEisnotaGSerrecurveequivalence}, we see that
\begin{equation} \label{infiniteunion}
\mc{S}(T) = \bigcup_{H \in \mf{S}^{\max}(G)} \mc{S}_H(T),
\end{equation}
where 
\[
\mc{S}_H(T) := \left\{ t_0 \in \mbq - B_{\mc{E}} :  H(t_0) \leq T, \; \rho_{\mc{E}_{t_0}}(G_\mbq) \, \dot\subseteq \, H  \right\}.
\]
A straightforward commutator calculation shows that, for any subgroup $H \subseteq G$ with $\tilde{H} = G$, $[H,H] = [G,G]$.  Thus, for any $H \in \mf{S}^{\max}(G)$, we must have $\tilde{H} \neq G$.  This observation shows that the sets $\mc{S}_H(T)$ above are (truncations of) ``thin sets'' in the sense of \cite{serre3}.  Indeed, for $H \in \mf{S}^{\max}(G)$, let
$
f_H : X_H \longrightarrow X_G
$
denote the forgetful map and 
$
d_H := \deg f_H = [G:H]
$
its degree.  We have the commuting diagram
\[
\begin{tikzcd}
X_H \arrow[black, bend left]{rr}{j_H} \rar{f_H} & X_G \rar{j_G} & \mbp^1_\mbq(j).
\end{tikzcd}
\]
For each $t_0 \in \mbq - B_{\mc{E}}$, we have
\[
\rho_{\mc{E}_{t_0}}(G_\mbq) \, \dot\subseteq \, H \; \Longleftrightarrow \; j_{\mc{E}_{t_0}} \in j_H(X_H(\mbq)),
\]
and thus
\[
\mc{S}_H(T) := \left\{ t_0 \in \mbq - B_{\mc{E}} :  H(t_0) \leq T, \; j_{\mc{E}_{t_0}} \in j_H\left(X_H(\mbq)\right)  \right\}.
\]
It follows from this (see \cite[p. 133]{serre3}) that, as $T \rightarrow \infty$, we have
\begin{equation} \label{generalasymptoticgrowth}
\left| \mc{S}_H(T) \right| \quad 
\begin{cases}
\sim C_{H} T^{2/d_{H}} & \text{ if } \genus(X_H) = 0 \text{ and } |X_H(\mbq)| = \infty, \\
\sim C_{H} (\log T)^{\rho_H/2} & \text{ if } \genus(X_H) = 1 \text{ and } |X_H(\mbq)| = \infty, \\
\ll_H 1 & \text{ if } \genus(X_H) \geq 2 \text{ or if } \genus(X_H) \leq 1 \text{ and } |X_H(\mbq)| < \infty,
\end{cases}
\end{equation}
where $C_{H} > 0$ denotes a constant, $d_{H}$ is the degree of $f_H$ in case $\genus(X_H) = 0$, and $\rho_H \geq 1$ denotes the Mordell-Weil rank of $X_H$ in case $\genus(X_H) = 1$ and $\left| X_H(\mbq) \right| = \infty$.  Furthermore, in \cite{cogrjo} it is shown that the infinite tail occurring in \eqref{infiniteunion} may be bounded, so that for any $\ve > 0$, we may write
\[
\mc{S}(T) = \mc{S}'(T) \cup \bigcup_{{\begin{substack} { H \in \mf{S}^{\max}(G) \\ m_{\GL_2}(H) \leq r } \end{substack}}} \mc{S}_H(T)
\]
for some $r = r_{\mc{E},\ve} \in \mbn$, where
\[
\mc{S}'(T) = 
\begin{cases}
    O_{\mc{E},\ve}\left( T^{1+\ve} \right) & \text{ if } \exists \begin{pmatrix} a & b \\ c & d \end{pmatrix} \in \GL_2(\mbq) \text{ and } P(x) \in \mbz[x] \text{ with } j_{\mc{E}}(t) = P\left(\frac{at+b}{ct+d}\right), \\
    O_{\mc{E},\ve}\left( T^{\ve} \right) & \text{ otherwise.}
\end{cases}
\]

Thus, we see that the asymptotic growth in $T$ of the truncated exceptional set $\mc{S}(T)$ is governed by the arithmetic of the curves $X_H$ for subgroups $H \in \mf{S}^{\max}(G)$; in particular, such growth is governed by those $H \in \mf{S}^{\max}(G)$ satisfying $\genus(X_H) = 0$, if such subgroups exist; we refine \eqref{defsofSofGandSsupmaxofG} by defining
\begin{equation*}
\mf{S}(G,g) := \left\{ H \in \mf{S}(G) : \, \genus(X_H) = g \right\}.
\end{equation*}
In case $\mf{S}(G,0) \neq \emptyset$, \eqref{generalasymptoticgrowth} leads us to the definitions
\[
\begin{split}
d_{\min}(G,0) :=& \min \left\{ d_H : \, H \in \mf{S}(G,0) \right\}, \\
\mf{S}_{\min}(G,0) :=& \left\{ H \in \mf{S}(G,0) \text{ and } d_H = d_{\min}(G,0) \right\}.
\end{split}
\]
In our case of the group $G$ appearing in Theorem \ref{applicationthm}, a computation shows that, for each prime $\ell \geq 5$ and each $H \in \mf{S}^{\max}(\ell)$ (see Remark \ref{maximalsubgroupsremark}), the modular curve $X_{\tilde{G} \cap \tilde{H}}$ has genus at least one.  A bit more computation shows that $d_{\min}(G,0) = 3$ and
\[
\mf{S}_{\min}(G,0) = \{ G_{2,1}, G_{6,1}, G_{9,1}, G_{18,1} \}
\]
(see Table \ref{tableofgenera}).  Finally, by considering the divisor $\Div(j_{\mc{E}})$, it is straightforward to verify that $j_{\mc{E}}(t)$ is not of the form $\ds P\left(\frac{at+b}{ct+d}\right)$ for any $\begin{pmatrix} a & b \\ c & d \end{pmatrix} \in \GL_2(\mbq)$ and any $P(x) \in \mbz[x]$.  Thus, the above analysis leads to the
following second theorem.  We define the rational functions
\begin{equation} \label{defoftsuboneandtsubtwo}
t_2(u) := \frac{3u^2 + 1}{u(u^2+3)}, \quad
t_6(u) := u^3+1, \quad t_9(u) := 1/u^3, \quad t_{18}(u) := \frac{1}{u^3-1},
\end{equation}
and the elliptic curve
\begin{equation} \label{defofGmcE}
    \mc{E} : \; y^2 = x^3 - \frac{108(t^2-1)(t^2-9)^3}{(t^4 + 18t^2 - 27)^2}x - \frac{432(t^2-1)(t^2-9)^3}{(t^4 + 18t^2 - 27)^2}.
\end{equation}
\begin{Theorem} \label{application2thm}
Let $G(6) \subseteq \GL_2(\mbz/6\mbz)$ be the index eight subgroup defined by \eqref{defofGof6} and let $G = \pi_{\GL_2}^{-1}(G(6))$ be the associated open subgroup of $\GL_2(\hat{\mbz})$. Let $\mc{E}$ be the elliptic curve over $\mbq(t)$ defined by \eqref{defofGmcE} and define the corresponding finite subset $B_{\mc{E}} \subset \mbq$ by \eqref{defofBE}.  We have
\begin{enumerate}
    \item For any $t_0 \in \mbq - B_{\mc{E}}$, the specialized curve $\mc{E}_{t_0}$ satisfies
    \[
    \rho_{\mc{E}_{t_0}}(G_\mbq) \, \dot\subseteq \, G.
    \]
    \item For any $\ve > 0$, the truncated exceptional set $\mc{S}(T)$, defined in \eqref{theexceptionalset}, satisfies
    \[
    | \mc{S}(T) | = C T^{2/3} + O_{\ve}(T^{\ve}),
    \]
    for some constant $C > 0$. More precisely, we have
    \begin{equation} \label{thewholepoint}
    \mc{S}(T) = \mc{S}'(T) \cup \bigcup_{ i \in \{ 2, 6, 9, 18 \}} \{ t_0 \in \mbq - B_{\mc{E}} : H(t_0) \leq T, t_0 = t_i(u_0) \text{ for some } u_0 \in \mbq \},
    \end{equation}
    where $t_2(u), t_6(u), t_9(u), t_{18}(u) \in \mbq(u)$ are as in \eqref{defoftsuboneandtsubtwo} and the set $\mc{S}'(T)$ satisfies $|\mc{S}'(T)| = O_{\ve}(T^{\ve})$.
    \item In particular, we have
    \[
    \lim_{T \rightarrow \infty} \frac{\left| \left\{ t_0 \in \mbq - B_{\mc{E}} : H(t_0) \leq T \text{ and } \mc{E}_{t_0} \text{ is a $G$-Serre curve} \right\} \right| }{\left| \left\{ t_0 \in \mbq - B_{\mc{E}} : H(t_0) \leq T \right\} \right|} = 1,
    \]
    i.e. almost all specializations of $\mc{E}$ are $G$-Serre curves.
\end{enumerate}
\end{Theorem}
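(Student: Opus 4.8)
The plan is to establish the three parts in sequence, reserving essentially all of the work for part (2) and drawing freely on the structural results already in place: the description of $\mf{S}^{\max}(G)$ from Theorem~\ref{applicationthm} and Remark~\ref{maximalsubgroupsremark}, the genus table (Table~\ref{tableofgenera}), Serre's thin-set estimates \eqref{generalasymptoticgrowth}, and the tail bound of \cite{cogrjo}. For part (1), I would first compute the $j$-invariant of the model \eqref{defofGmcE} via $j_{\mc{E}} = 1728\cdot 4a^3/(4a^3+27b^2)$ and verify it equals the forgetful-map function $j_G$ of $X_G \simeq \mbp^1_\mbq$. Because $-I \in G$, the group $G$ is twist-independent, so $\rho_E(G_\mbq)\,\dot\subseteq\,G$ depends only on $j(E)$; for $t_0 \in \mbq - B_{\mc{E}}$ the specialization $\mc{E}_{t_0}$ is an elliptic curve over $\mbq$ with $j(\mc{E}_{t_0}) = j_G(t_0) \in j_G(X_G(\mbq)) - \{0,1728\}$, and the modular interpretation of $X_G(\mbq)$ combined with twist-independence then yields $\rho_{\mc{E}_{t_0}}(G_\mbq)\,\dot\subseteq\,G$.

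For part (2), by \eqref{ifEisnotaGSerrecurveequivalence} and \eqref{infiniteunion} I would write $\mc{S}(T) = \bigcup_{H \in \mf{S}^{\max}(G)} \mc{S}_H(T)$, the index set (by Remark~\ref{maximalsubgroupsremark}) being the finitely many groups $G\cap G_{i,j}$ of \eqref{GSerrecurvedetectionsubgroups} together with the infinite family $\{ G\cap H : H \in \mf{S}^{\max}(\ell),\ \ell \geq 5 \}$, and then separate the contributions by the genus of $X_H$ using \eqref{generalasymptoticgrowth}. Reading Table~\ref{tableofgenera}, the only $G\cap G_{i,j}$ with $\genus(X_{\tilde{G}\cap\tilde{G}_{i,j}}) = 0$ are those for $(i,j) \in \{(2,1),(6,1),(9,1),(18,1)\}$, each of degree $d_H = [G:G\cap G_{i,j}] = 3$; all other $G_{i,j}$-pieces have genus $\geq 1$ and so contribute $\ll (\log T)^{\rho_H/2}$ or $\ll 1$. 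For the infinite tail I would invoke the computation that $X_{\tilde{G}\cap\tilde{H}}$ has genus $\geq 1$ for every $\ell \geq 5$ and every $H \in \mf{S}^{\max}(\ell)$, together with \cite{cogrjo}, to absorb the tail into $\mc{S}'(T)$; since $j_{\mc{E}}$ is not of the form $P\big(\tfrac{at+b}{ct+d}\big)$ (verified by inspecting $\Div(j_{\mc{E}})$), this puts us in the ``otherwise'' case and gives $|\mc{S}'(T)| = O_\ve(T^\ve)$.

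It remains to make the four dominant pieces explicit and to add them. For each $H_i \in \{ G\cap G_{2,1}, G\cap G_{6,1}, G\cap G_{9,1}, G\cap G_{18,1} \}$ the forgetful map $f_{H_i} : X_{H_i} \simeq \mbp^1_u \to X_G \simeq \mbp^1_t$ is a degree-$3$ rational map, which I would compute and identify with $f_{H_i}(u) = t_i(u)$ from \eqref{defoftsuboneandtsubtwo}; then $\rho_{\mc{E}_{t_0}}(G_\mbq)\,\dot\subseteq\,H_i$ holds precisely when $t_0 = t_i(u_0)$ for some $u_0 \in \mbq$, which is the content of \eqref{thewholepoint}, and \eqref{generalasymptoticgrowth} gives $|\mc{S}_{H_i}(T)| \sim C_{H_i}T^{2/3}$ with $C_{H_i} > 0$. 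The pairwise intersections $\mc{S}_{H_i}(T)\cap\mc{S}_{H_j}(T)$ correspond to the fiber products $X_{H_i}\times_{X_G}X_{H_j}$, which one checks have genus $\geq 1$ (e.g. $t_6(u)=t_9(v)$ yields the Fermat cubic $(uv)^3 + v^3 = 1$), so inclusion--exclusion leaves $|\mc{S}(T)| = C\,T^{2/3} + O_\ve(T^\ve)$ with $C = \sum_i C_{H_i} > 0$. Part (3) is then immediate from \eqref{asymptoticforrationalsofboundedheight}: the total number of height-$\leq T$ parameters grows like $T^2/(2\zeta(2))$, so the non-$G$-Serre proportion is $O(T^{-4/3}) \to 0$ and the $G$-Serre proportion tends to $1$.

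The step I expect to be the main obstacle is the uniform control of the infinite tail: one must show $X_{\tilde{G}\cap\tilde{H}}$ has genus $\geq 1$ for \emph{every} prime $\ell \geq 5$ and every $H \in \mf{S}^{\max}(\ell)$, since a single genus-zero exception of degree $\leq 3$ would spoil the leading term. I would dispatch the finitely many small $\ell$ by direct genus computation and the remaining primes by a Riemann--Hurwitz lower bound for the genus of $X_{\tilde{G}\cap\tilde{H}}$ that grows with $\ell$. A secondary technical burden is the honest determination of the four degree-$3$ maps $f_{H_i}$ (equivalently the functions $t_i$) and the confirmation that all overlaps among the dominant pieces are of strictly lower order.
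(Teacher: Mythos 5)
Your proposal is correct and takes essentially the same route as the paper's own argument, which is exactly the analysis preceding the theorem: part (1) from the modular interpretation with $-I \in G$, part (2) via the decomposition \eqref{infiniteunion} over $\mf{S}^{\max}(G)$ (Theorem \ref{applicationthm} and Remark \ref{maximalsubgroupsremark}), the genus data of Table \ref{tableofgenera} giving $\mf{S}_{\min}(G,0) = \{G_{2,1}, G_{6,1}, G_{9,1}, G_{18,1}\}$ with $d_{\min}(G,0)=3$, Serre's thin-set estimates \eqref{generalasymptoticgrowth}, the \cite{cogrjo} tail bound in the ``otherwise'' case checked via $\Div(j_{\mc{E}})$, and the identification of the four degree-$3$ forgetful maps with the $t_i$ of \eqref{defoftsuboneandtsubtwo}. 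Your explicit treatment of the pairwise overlaps (e.g. reducing $t_6(u)=t_9(v)$ to the Fermat cubic) and the proposed Riemann--Hurwitz bound for the genus of $X_{\tilde{G} \cap \tilde{H}}$ uniformly in $\ell \geq 5$ merely make explicit two steps the paper compresses into ``a computation shows.''
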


\begin{remark}
Part (3) of Theorem \ref{applicationthm} follows immediately from part (2) and \eqref{asymptoticforrationalsofboundedheight}. It is also is a special case of \cite[Theorem 2.11]{jones2},  the relevance to this paper being \eqref{thewholepoint}, which highlights the involvement of non-abelian entanglement modular curves in this problem, since $t_6(u)$ (resp. $t_{18}(u)$) corresponds to the forgetful map associated to the curve $X_{G_6 \cap G}$ (resp. the curve $X_{G_{18}}$) featured in Theorem \ref{mainthm}:
\[
\begin{tikzcd}
X_{G_6 \cap G} \arrow[black, bend left]{rrr}{\text{forgetful map}} \rar{\simeq} & \mbp^1_\mbq(u) \rar{t_6} & \mbp^1_\mbq(t) \rar{\simeq} & X_G, \\
X_{G_{18}} \arrow[black, bend right]{rrr}{\text{forgetful map}} \rar{\simeq} & \mbp^1_\mbq(u) \rar{t_{18}} & \mbp^1_\mbq(t) \rar{\simeq} & X_G.
\end{tikzcd}
\]
In particular, we see that non-abelian entanglement groups arise naturally in the problem of determining and counting elliptic curves over $\mbq$ for which $\rho_E(G_\mbq) \, \dot\subseteq \, G$ is as large as possible, given the constraints dictated by the Kronecker-Weber theorem.
\end{remark}

\bigskip

\section{Reducing to a Finite Search}\label{Section:FiniteSearch}

In this section we prove Proposition \ref{keypropositionformainthm}, which reduces the computation of $\mc{G}_{\nonab}^{\max}(0) / \doteq$ to a finite search.  We consider the quotient set 
\[
\mc{G}_{\nonab}(0) / \doteq
\]
of all open subgroups $G \subseteq \GL_2(\hat{\mbz})$ of genus zero that are non-abelian entanglement groups in the sense of Definition \ref{defofnonabelianentanglementgroup}, up to conjugation inside $\GL_2(\hat{\mbz})$.  More generally, we may fix an arbitrary genus $g$ and consider the quotient set $\mc{G}(g) / \doteq$.  In what follows, we will be discussing the corresponding set of modular curves
$
\{ X_{\tilde{G}} : G \in \mc{G}(g) / \doteq \}
$,
so it will be natural to introduce the notation
\[
\tilde{\mc{G}} := \{ \tilde{G} : G \in \mc{G} \}, \quad\quad \tilde{\mc{G}}(g) := \{ \tilde{G} : G \in \mc{G}(g) \},
\]
and the associated set of modular curves
\[
\{ X_{\tilde{G}} : \tilde{G} \in \tilde{\mc{G}} / \doteq \}.
\]
If we view these modular curves \emph{geometrically}, i.e. if we regard two such curves as equivalent if they are isomorphic over $\ol{K}$, then the further quotient set 
\begin{equation} \label{modularcurvesgeometrically}
\{ X_{\tilde{G}} : \tilde{G} \in \tilde{\mc{G}}(g) / \doteq \} / \simeq_{\ol{K}}
\end{equation}
 of geometric modular curves is finite, for any fixed $g \in \mbz_{\geq 0}$.  This may be restated in terms of the groups $\tilde{G} \in \tilde{\mc{G}}(g) / \doteq$ as follows.  We have
\[
X_{\tilde{G}_1} \simeq_{\ol{K}} X_{\tilde{G}_2} \; \Longleftrightarrow \; \tilde{G}_1 \cap \SL_2(\hat{\mbz}) \doteq \tilde{G}_2 \cap \SL_2(\hat{\mbz}).
\]
Thus, coarsening the relation $\doteq$ to $\doteq_{\SL_2}$, defined by
\[
\tilde{G}_1 \doteq_{\SL_2} \tilde{G}_2 \; \myeq \; \tilde{G}_1 \cap \SL_2(\hat{\mbz}) \doteq \tilde{G}_2 \cap \SL_2(\hat{\mbz}),
\]
we have that
\begin{equation} \label{equivalenttorademacher}
   \left| \tilde{\mc{G}}(g) / \doteq_{\SL_2} \right| < \infty \quad\quad \left( g \in \mbz_{\geq 0} \right).
\end{equation}
However, within each $\doteq_{\SL_2}$-equivalence class, there are infinitely many $\doteq$-equivalence classes, which corresponds in part to the fact that any given $\ol{K}$-isomorphism class in
\eqref{modularcurvesgeometrically}
contains infinitely many twists, i.e. infinitely many $K$-isomorphism classes.  
The case $g=0$ of \eqref{equivalenttorademacher} is equivalent to a well-known conjecture of Rademacher that was proven by Denin (see \cite{denin1}, \cite{denin2} and \cite{denin3}).  More generally, in \cite{thompson} and \cite{zograf}, the same is shown for a general $g \in \mbz_{\geq 0}$. 
In addition, there is a fair amount of literature on the \emph{effective} resolution of Rademacher's conjecture.  In particular, Cummins and Pauli \cite{cumminspauli} have produced the complete list of the elements of $\tilde{\mc{G}}(g) / \doteq_{\SL_2}$
for $g \leq 24$; it can be seen in the tables therein that 
\begin{equation} \label{cumminspaulilistoflevels}
G \in \mc{G}(0) \; \Longrightarrow \; m_{\SL_2}(\tilde{G}) \in \left\{ \begin{matrix} 1, 2, 3, 4, 5, 6, 7, 8, 9, 10, 11, 12, 13, 14, 15, \\ 16, 18, 20, 21, 24, 25, 26, 27, 28, 30, 32, 36, 48 \end{matrix} \right\}.
\end{equation}
It is possible that $m_{\SL_2}(G) > m_{\SL_2}(\tilde{G})$, and the following lemma controls this difference.
\begin{lemma} \label{Mto2timesMlemma}
Let $G \subseteq \GL_2(\hat{\mbz})$ be an open subgroup.  We then have
\begin{equation} \label{Gversion}
\frac{m_{\SL_2}(G)}{m_{\SL_2}(\tilde{G})} \in \{ 1, 2 \},
\end{equation}
where $\tilde{G}$ is as in \eqref{defofGtilde}.
\end{lemma}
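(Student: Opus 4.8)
The plan is to reduce \eqref{Gversion} first to a question purely inside $\SL_2(\hat{\mbz})$, then to a statement about order-$2$ characters of principal congruence subgroups, and finally to a local computation at the prime $2$. Throughout write $\Gamma(m) := \ker(\SL_2(\hat{\mbz}) \to \SL_2(\mbz/m\mbz))$, and recall the standard fact that $\{m \in \mbn : \Gamma(m) \subseteq U\}$ is exactly the set of multiples of $m_{\SL_2}(U)$, for any open $U \subseteq \SL_2(\hat{\mbz})$. First I would set $H := G \cap \SL_2(\hat{\mbz})$ and $\tilde H := \tilde G \cap \SL_2(\hat{\mbz})$. Since $\det(-I) = 1$, one checks that $\tilde H = \langle H, -I\rangle = H \cup (-I)H$, and since $\Gamma(m) \subseteq \SL_2(\hat{\mbz})$ we have $m_{\SL_2}(G) = m_{\SL_2}(H)$ and $m_{\SL_2}(\tilde G) = m_{\SL_2}(\tilde H)$. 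If $-I \in H$ then $H = \tilde H$ and the ratio equals $1$; so assume $-I \notin H$, whence $[\tilde H : H] = 2$. From $H \subseteq \tilde H$ we get $M := m_{\SL_2}(\tilde H) \mid m_{\SL_2}(H)$, so it remains to prove $m_{\SL_2}(H) \mid 2M$, which then forces $m_{\SL_2}(H) \in \{M, 2M\}$.

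Next I would reformulate this as a character statement. Since $\Gamma(M) \subseteq \tilde H = H \sqcup (-I)H$, the canonical surjection $\tilde H \to \tilde H/H \cong \{\pm 1\}$ restricts to a continuous homomorphism $\chi \colon \Gamma(M) \to \{\pm 1\}$ with kernel $\Gamma(M) \cap H$. For any multiple $m$ of $M$ we then have $\Gamma(m) \subseteq H$ if and only if $\chi(\Gamma(m)) = \{1\}$. Hence it suffices to show that every continuous homomorphism $\Gamma(M) \to \{\pm 1\}$ is trivial on $\Gamma(2M)$. Using the product decomposition $\Gamma(M) = \prod_p \Gamma(p^{v_p(M)})$ (with $\Gamma(p^e) \subseteq \SL_2(\mbz_p)$), every continuous homomorphism to $\{\pm 1\}$ is a sum of homomorphisms of the local factors, while $\Gamma(2M) = \prod_p \Gamma(p^{v_p(2M)})$ with $v_p(2M) = v_p(M)$ for odd $p$ and $v_2(2M) = v_2(M)+1$. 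Thus it is enough to prove: (a) for each odd prime $p$ and each $e \ge 0$ the group $\Gamma(p^e)$ admits no nontrivial homomorphism to $\{\pm 1\}$; and (b) at $p = 2$, with $k := v_2(M)$, every homomorphism $\Gamma(2^k) \to \{\pm 1\}$ kills $\Gamma(2^{k+1})$. Part (a) is immediate: for $e \ge 1$ the group $\Gamma(p^e)$ is pro-$p$ with $p$ odd, and for $e = 0$ the finite abelian quotients of $\SL_2(\mbz_p)$ surject onto $\SL_2(\mbf_p)^{\mathrm{ab}}$ (of order prime to $2$) with pro-$p$ kernel, so there is no $2$-torsion quotient.

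The prime $2$, namely part (b), is the heart of the matter and the main obstacle. Since any homomorphism to $\{\pm 1\}$ factors through the maximal elementary abelian $2$-quotient, for $k \ge 1$ this amounts to the inclusion $\Gamma(2^{k+1}) \subseteq \Phi$, where $\Phi := \overline{\Gamma(2^k)^2\,[\Gamma(2^k),\Gamma(2^k)]}$ is the Frattini subgroup of the pro-$2$ group $\Gamma(2^k)$. The inclusion $\Phi \subseteq \Gamma(2^{k+1})$ is clear, because squares and commutators of elements of $\Gamma(2^k)$ lie in $\Gamma(2^{k+1})$. For the reverse I would analyze the squaring map on the successive quotients $\Gamma(2^j)/\Gamma(2^{j+1}) \cong \mf{sl}_2(\mbf_2)$: writing $g = I + 2^j A$ one computes $g^2 = I + 2^{j+1}\bigl(A + 2^{j-1}A^2\bigr)$, so for $j \ge 2$ the square $g^2$ maps to $A \bmod 2$ and fills $\Gamma(2^{j+1})/\Gamma(2^{j+2})$, while for $j = 1$ it maps to $A + A^2 \bmod 2$, whose values \emph{generate} $\mf{sl}_2(\mbf_2)$ although they do not fill it. In every case the subgroup generated by squares of $\Gamma(2^j)$ surjects onto $\Gamma(2^{j+1})/\Gamma(2^{j+2})$, giving $\Gamma(2^{j+1}) \subseteq \Gamma(2^j)^2\,\Gamma(2^{j+2}) \subseteq \Phi\,\Gamma(2^{j+2})$ for all $j \ge k$ (using $\Gamma(2^j)^2 \subseteq \Phi$).

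Finally, iterating $\Gamma(2^{k+1}) \subseteq \Phi\,\Gamma(2^{k+2}) \subseteq \Phi\,\Gamma(2^{k+3}) \subseteq \cdots$ and using that $\Phi$ is closed with $\bigcap_n \Gamma(2^{k+1+n}) = \{I\}$ yields $\Gamma(2^{k+1}) \subseteq \Phi$, completing (b) for $k \ge 1$; the remaining case $k = 0$ (i.e. $M$ odd) is handled by a direct finite computation in $\SL_2(\mbz/4\mbz)$ showing that the unique order-$2$ character of $\SL_2(\mbz_2)$ factors through $\SL_2(\mbf_2) \cong S_3$ and therefore kills $\Gamma(2)$. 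I expect the genuinely delicate point to be the bottom layer $j = 1$, where one must pass to the subgroup \emph{generated} by the squares rather than to their image set — and this is precisely the mechanism by which the ratio in \eqref{Gversion} can equal $2$ rather than $1$.
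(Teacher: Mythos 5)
Your proof is correct, and it reaches \eqref{Gversion} by a genuinely different route than the paper. The paper splits the statement into two lemmas: a ``horizontal'' one (Lemma \ref{horizontallemma}), for a prime dividing $m_{\SL_2}(G)$ but not $m_{\SL_2}(\tilde{G})$, proved by running Goursat's trichotomy on the index-two inclusion $S(m_S) \subseteq \tilde{S}(m_S)$, discharging two of the three cases via the abelianization $\SL_2(\hat{\mbz}) \twoheadrightarrow \mbz/4\mbz \times \mbz/3\mbz$ and the cited lemma of Zywina (no proper $S \subsetneq \SL_2(\hat{\mbz})$ with $\tilde{S} = \SL_2(\hat{\mbz})$), plus an extra squarefree argument to pin down $m_S' = m_{\tilde{S}}$; and a ``vertical'' one (Lemma \ref{verticallemma}), proved by the squaring computation $(I+m_{\tilde{S}}X)^2 \equiv I + 2m_{\tilde{S}}X$ for the square-zero matrices in the paper's set $\mc{K}$. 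You exploit instead that $H := G \cap \SL_2(\hat{\mbz})$ is automatically normal of index at most two in $\tilde{H}$, so Goursat's lemma can be replaced wholesale by a single order-$2$ character of $\Gamma(M)$, which then localizes: the horizontal case collapses to the absence of order-$2$ characters of $\Gamma(p^e)$ and $\SL_2(\mbz_p)$ for odd $p$; your $k=0$ computation (the unique order-$2$ character of $\SL_2(\mbz_2)$ factors through $\SL_2(\mbf_2)$, hence kills $\Gamma(2) \ni -I$) quietly reproves exactly the consequence of Zywina's lemma that the paper needs; and the vertical case becomes the Frattini inclusion $\Gamma(2^{k+1}) \subseteq \Phi$. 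Your computational kernel coincides with the paper's: the $j=1$ image set $\left\{0,\, E_{12},\, E_{21},\, \left(\begin{smallmatrix} 1&1\\ 1&1 \end{smallmatrix}\right)\right\}$ of $\bar{A} \mapsto \bar{A}+\bar{A}^2$ on $\mf{sl}_2(\mbf_2)$ is precisely the mod-$2$ reduction of $\mc{K}$, and your observation that these values generate the layer without filling it is the same mechanism the paper uses, and is indeed where the factor $2$ comes from. What your route buys is uniformity and self-containedness: one mechanism for both cases, no external lemma, and the closure step $\bigcap_n \Phi\,\Gamma(2^{k+1+n}) = \Phi$ is standard compactness. What the paper's route buys is the finer conclusions in its two lemmas (in the horizontal case $m_{\SL_2}(\tilde{G})$ is odd, in the vertical case it is even), which your argument does not directly recover, though nothing downstream of Lemma \ref{Mto2timesMlemma} (in particular Corollary \ref{levelincreaseboundcor}) uses them. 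One step you should make explicit: in your $k=0$ case, the reduction of the character computation to the finite group $\SL_2(\mbz/4\mbz)$ needs justification, but it follows from your own $k=1$ case applied to the restriction of the character to $\Gamma(2)$.
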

Lemma \ref{Mto2timesMlemma} will be established as a corollary of the Lemmas \ref{horizontallemma} and \ref{verticallemma} below, which are in turn aided by the next two lemmas.
\begin{lemma} \label{zywinalemma}
There is no proper subgroup $S \subsetneq \SL_2(\hat{\mbz})$ satisfying $\tilde{S} = \SL_2(\hat{\mbz})$.
\end{lemma}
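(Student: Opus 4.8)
The plan is to reduce the statement to a fact about index-two subgroups and then exploit that $-I$ is a \emph{square} in $\SL_2(\hat{\mbz})$. First I would note that, since $-I$ is central of order two, for any subgroup $S \subseteq \SL_2(\hat{\mbz})$ we have $\tilde{S} = \langle S, -I \rangle = S \cup (-I)S$, so $[\tilde{S} : S] \in \{1, 2\}$. If $\tilde{S} = \SL_2(\hat{\mbz})$ while $S$ is proper, then necessarily $[\SL_2(\hat{\mbz}) : S] = 2$ and $-I \notin S$. Thus it suffices to prove that every subgroup of index two in $\SL_2(\hat{\mbz})$ contains $-I$; the lemma then follows by contradiction.

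The key observation is that $-I = w^2$, where $w := \begin{pmatrix} 0 & -1 \\ 1 & 0 \end{pmatrix} \in \SL_2(\mbz) \subseteq \SL_2(\hat{\mbz})$. Now let $H \subseteq \SL_2(\hat{\mbz})$ be any subgroup of index two. Then $H$ is normal and the quotient map $\pi : \SL_2(\hat{\mbz}) \to \SL_2(\hat{\mbz})/H \simeq \mbz/2\mbz$ is a surjective homomorphism. Since every element of $\mbz/2\mbz$ is annihilated by $2$, we get $\pi(-I) = \pi(w^2) = 2\,\pi(w) = 0$, whence $-I \in \ker \pi = H$. This shows that no index-two subgroup can omit $-I$, completing the argument.

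The point I would emphasize is that one should \emph{not} attempt to show $-I \in [\SL_2(\hat{\mbz}), \SL_2(\hat{\mbz})]$: this is false, as one already sees modulo $4$, where the image of $-I$ in the abelianization $\SL_2(\mbz/4\mbz)^{\ab} \simeq \mbz/4\mbz$ is the (nonzero) element of order two. The resolution, and the only real subtlety, is that membership in every index-two subgroup is governed by homomorphisms to $\mbz/2\mbz$ rather than by the full abelianization: an element that is a square lies in the kernel of every homomorphism to $\mbz/2\mbz$, even if it is not a commutator. Since $-I$ is visibly a square, the conclusion is immediate once this reduction is in place, and the argument uses nothing about the specific profinite structure of $\SL_2(\hat{\mbz})$ beyond the presence of an element squaring to $-I$.
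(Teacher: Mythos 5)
Your proof is correct, and it takes a genuinely different route from the paper, which gives no argument of its own but simply cites \cite[Lemma 2.3]{zywina2}. The standard argument behind that citation (and implicit in the paper's later use of the abelianization map in \eqref{abelianaztionmap}) runs through the abelianization: an $S$ as in the lemma would have index two, hence be normal with quotient $\mbz/2\mbz$, and every homomorphism $\SL_2(\hat{\mbz}) \rightarrow \mbz/2\mbz$ factors through $\SL_2(\hat{\mbz})^{\ab} \simeq \mbz/4\mbz \times \mbz/3\mbz$, where one checks that the image $(2,0)$ of $-I$ dies in every order-two quotient; this requires actually knowing the abelianization at level $12$. Your observation that $-I = w^2$ with $w = \begin{pmatrix} 0 & -1 \\ 1 & 0 \end{pmatrix} \in \SL_2(\mbz)$ short-circuits that computation entirely: a square lies in the kernel of every homomorphism to a group of exponent two, so no index-two subgroup can omit $-I$, and your reduction via $\tilde{S} = S \cup (-I)S$ (valid since $-I$ is central of order two) shows this is exactly the situation a counterexample would create. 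Your version buys two things: it applies to abstract, not necessarily closed, subgroups $S$ (your argument never invokes the topology of $\SL_2(\hat{\mbz})$), and it transfers verbatim to any group containing a square root of $-I$, e.g. $\SL_2(\mbz/m\mbz)$ or $\SL_2(\mbz_p)$. Your cautionary remark is also accurate and worth keeping: $-I$ does not lie in the closed commutator subgroup of $\SL_2(\hat{\mbz})$ --- its image in $\SL_2(\hat{\mbz})^{\ab} \simeq \mbz/12\mbz$ is indeed the element of order two --- so an argument via perfectness would fail, and the distinction you draw between ``kernel of every map to $\mbz/2\mbz$'' and ``commutator subgroup'' is precisely the right one.
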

\begin{proof}
See \cite[Lemma 2.3]{zywina2}
\end{proof}
\begin{lemma} \label{goursatlemma} (Goursat's Lemma)
\noindent
Let $G_1$, $G_2$ be groups and for $i \in \{1, 2 \}$ denote by $\pr_i : G_1 \times G_2 \longrightarrow G_i$ the projection map onto the $i$-th factor. Let $G \subseteq G_1 \times G_2$ be a subgroup and 
assume that 
$$
\pr_1(G) = G_1, \; \pr_2(G) = G_2.
$$
Then 
there exists a group $\Gamma$ together with a pair of surjective homomorphisms 
\[
\begin{split}
\psi_1 : G_1 &\longrightarrow \Gamma \\
\psi_2 : G_2 &\longrightarrow \Gamma
\end{split}
\]
so that 
\[
G = G_1 \times_\psi G_2 := \{ (g_1,g_2) \in G_1 \times G_2 : \psi_1(g_1) = \psi_2(g_2) \}.
\]
\end{lemma}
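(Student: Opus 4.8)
The plan is to construct the common quotient $\Gamma$ and the maps $\psi_1,\psi_2$ explicitly out of the subgroup $G$, following the classical argument for Goursat's lemma. First I would define the two ``partial kernel'' subgroups
\[
N_1 := \{ g_1 \in G_1 : (g_1, 1) \in G \}, \qquad N_2 := \{ g_2 \in G_2 : (1, g_2) \in G \},
\]
and verify that each $N_i$ is normal in $G_i$. For instance, normality of $N_2$ follows because, given $g_2 \in N_2$ and any $h_2 \in G_2$, surjectivity of $\pr_2$ produces some $h_1 \in G_1$ with $(h_1,h_2) \in G$; conjugating $(1,g_2) \in G$ by $(h_1,h_2)$ inside $G$ then yields $(1, h_2 g_2 h_2^{-1}) \in G$, so $h_2 g_2 h_2^{-1} \in N_2$. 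The argument for $N_1$ is symmetric, using surjectivity of $\pr_1$.

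Next I would build an isomorphism $G_1/N_1 \simeq G_2/N_2$. Given $g_1 \in G_1$, surjectivity of $\pr_1$ furnishes some $g_2$ with $(g_1,g_2) \in G$, and I would check that the coset $g_2 N_2$ does not depend on the choice of $g_2$: two such choices differ by an element of the form $(1,\ast) \in G$, hence by an element of $N_2$. This well-definedness gives a map $\phi : G_1 \longrightarrow G_2/N_2$, which is immediately seen to be a surjective homomorphism whose kernel is exactly $N_1$. By the first isomorphism theorem, $\phi$ descends to an isomorphism $\bar\phi : G_1/N_1 \longrightarrow G_2/N_2$.

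With this in hand, I would set $\Gamma := G_2/N_2$, take $\psi_2 : G_2 \longrightarrow \Gamma$ to be the canonical projection, and set $\psi_1 := \phi$; both are surjective by construction. The final step is the set equality $G = \{ (g_1,g_2) : \psi_1(g_1) = \psi_2(g_2) \}$. The inclusion $\subseteq$ is immediate from the definition of $\phi$. For $\supseteq$, given a pair with $\psi_1(g_1) = \psi_2(g_2)$, I would pick $g_2'$ with $(g_1, g_2') \in G$ so that $\psi_1(g_1) = g_2' N_2$, deduce $g_2' N_2 = g_2 N_2$ and hence $g_2'^{-1} g_2 \in N_2$, and then write
\[
(g_1, g_2) = (g_1, g_2')\,(1, g_2'^{-1} g_2)
\]
as a product of two elements of $G$.

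This argument is entirely formal and presents no genuine obstacle; the only points requiring a moment's care are the well-definedness of $\phi$ and the normality of the $N_i$, which is precisely where the surjectivity of the two projections is used in an essential way. Everything else is a routine chase through the first isomorphism theorem, so in practice one could simply cite a standard reference for Goursat's lemma.
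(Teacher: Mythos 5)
Your proof is correct, and it is precisely the classical argument for Goursat's Lemma: the paper itself offers no proof, simply citing Ribet's Lemma (5.2.1), which is the same standard construction via the partial kernels $N_1$, $N_2$ and the induced isomorphism $G_1/N_1 \simeq G_2/N_2$. All the delicate points — normality of the $N_i$ via surjectivity of the projections, well-definedness of $\phi$, and both inclusions in the final set equality — are handled correctly, so as you note, in the paper's context one would simply cite the reference.
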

\begin{proof}
See \cite[Lemma (5.2.1)]{ribet}.
\end{proof}
\begin{lemma} \label{horizontallemma}
Suppose that $G \subseteq \GL_2(\hat{\mbz})$ is an open subgroup satisfying $m_{\SL_2}(\tilde{G}) < m_{\SL_2}(G)$, and that there is a prime $p$ dividing $m_{\SL_2}(G)$ that doesn't divide $m_{\SL_2}(\tilde{G})$.  Then $p = 2$ (so $m_{\SL_2}(\tilde{G})$ is odd) and $m_{\SL_2}(G) = 2m_{\SL_2}(\tilde{G})$.
\end{lemma}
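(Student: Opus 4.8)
The plan is to reduce everything to a statement about $\SL_2$ and then to a single local computation at the prime $2$. Write $S := G \cap \SL_2(\hat{\mbz})$ and $\tilde{S} := \tilde{G} \cap \SL_2(\hat{\mbz})$, and for $m \in \mbn$ let $\Gamma(m) := \ker\left(\SL_2(\hat{\mbz}) \to \SL_2(\mbz/m\mbz)\right)$. Since $\det(-g)=\det(g)$, one checks $\tilde{S} = \langle S, -I\rangle = S \cup (-I)S$, and that $m_{\SL_2}(G)$, $m_{\SL_2}(\tilde{G})$ are exactly the levels of $S$, $\tilde{S}$; set $N := m_{\SL_2}(G)$ and $M := m_{\SL_2}(\tilde{G})$. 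The strict inequality $M < N$ forces $-I \notin G$, hence $-I \notin S$ and $[\tilde{S}:S]=2$; also $S \subseteq \tilde{S}$ gives $M \mid N$. The key device is that $\Gamma(M) \subseteq \tilde{S}$, so restricting the quotient map $\tilde{S} \to \tilde{S}/S \cong \mbz/2$ produces a continuous homomorphism $\chi : \Gamma(M) \to \mbz/2$ with $\ker\chi = \Gamma(M) \cap S$. This $\chi$ is nontrivial, for otherwise $\Gamma(M) \subseteq S$ would give $N \mid M$ and thus $M = N$. Moreover, for any multiple $N'$ of $M$ one has $\Gamma(N') \subseteq S \iff \chi|_{\Gamma(N')} = 0$, so $N$ is precisely the least multiple of $M$ on which $\chi$ vanishes.

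Next I would localize $\chi$ at the prime $2$. Decompose $\Gamma(M) = \prod_\ell \Gamma_\ell\big(\ell^{v_\ell(M)}\big)$ over primes, where $\Gamma_\ell(\ell^k) = \ker\left(\SL_2(\mbz_\ell) \to \SL_2(\mbz/\ell^k\mbz)\right)$. Because $\chi$ is valued in $\mbz/2$ and, for every odd prime $\ell$, the factor $\Gamma_\ell(\ell^{v_\ell(M)})$ has no quotient of order $2$ — being pro-$\ell$ when $v_\ell(M) \ge 1$, and, when $v_\ell(M)=0$, equal to $\SL_2(\mbz_\ell)$, which is generated by elementary unipotents of $\ell$-power order — the restriction of $\chi$ to each odd factor is trivial. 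Hence $\chi = \chi_2 \circ \pr_2$ for a nontrivial $\chi_2 : \Gamma_2(2^{v_2(M)}) \to \mbz/2$, and whether $\Gamma(N') \subseteq S$ holds depends only on $v_2(N')$. It follows that $N = M \cdot 2^{\,b - v_2(M)}$, where $b$ is the least exponent with $\Gamma_2(2^b) \subseteq \ker\chi_2$; nontriviality of $\chi_2$ forces $b > v_2(M)$. Thus $N/M$ is a power of $2$, so the only prime that can divide $N$ but not $M$ is $2$. The hypothesis then yields $p = 2$ and $v_2(M) = 0$, i.e. $M$ is odd, and $N = 2^b M$ with $b \ge 1$.

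It remains to prove $b = 1$ (so that $N = 2M$), which is the heart of the matter: every nontrivial continuous homomorphism $\chi_2 : \SL_2(\mbz_2) \to \mbz/2$ must kill $\Gamma_2(2) = \ker\left(\SL_2(\mbz_2)\to\SL_2(\mbz/2\mbz)\right)$, i.e. factor through $\SL_2(\mbz/2\mbz)$. I would establish this by showing that every $g \equiv I \pmod 2$ dies in $\SL_2(\mbz_2)^{\ab} \otimes \mbz/2$. Gauss decomposition writes such a $g$ (with odd upper-left entry $a$) as $\ell(ca^{-1})\, h(a)\, u(a^{-1}b)$, where $u,\ell$ are upper/lower unipotents with \emph{even} argument and $h(a)=\mathrm{diag}(a,a^{-1})$; the unipotent factors vanish mod $2$ since $x \mapsto u(x)$ and $x \mapsto \ell(x)$ are homomorphisms $\mbz_2 \to \SL_2(\mbz_2)^{\ab}$, while the torus factor, via the Weyl identity $h(a) = w(a)\,w(1)^{-1}$ with $w(x) = u(x)\ell(-x^{-1})u(x)$, maps to $3\tau - 3\tau \equiv 0$, all unit-argument unipotents sharing a common image $\tau$ in $\SL_2(\mbz_2)^{\ab}\otimes\mbz/2$. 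The main obstacle is exactly this last paragraph: one must control \emph{all} order-$2$ quotients of $\SL_2(\mbz_2)$ simultaneously (equivalently pin down $\SL_2(\mbz_2)^{\ab}\otimes\mbz/2$), and the delicate point is the torus contribution, which the Weyl-element identity dispatches. With $\Gamma_2(2) \subseteq \ker\chi_2$ in hand, $b=1$ and $N = 2M$, completing the proof.
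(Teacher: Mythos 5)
Your proof is correct, and it takes a genuinely different route from the paper's. The paper fixes the hypothesized prime $p$, writes $m_S = p^\alpha m_S'$, and applies Goursat's lemma to the index-two inclusion $S(m_S) \subseteq \tilde{S}(m_S) \simeq \tilde{S}(m_S') \times \SL_2(\mbz/p^\alpha\mbz)$: of the three possible shapes, one is ruled out by level considerations, one by Lemma~\ref{zywinalemma} (no proper $S \subsetneq \SL_2(\hat{\mbz})$ with $\tilde{S} = \SL_2(\hat{\mbz})$), and the surviving genuine fibered product is analyzed using the known abelianization $\SL_2(\hat{\mbz}) \rightarrow \mbz/12\mbz$ factoring through levels $4$ and $3$, which forces $p = 2$, $\ker \psi_{2^\alpha} = \pi_{\SL_2}^{-1}(A_3)$ and hence $\alpha = 1$; the paper then needs a separate second stage (a square-freeness argument plus the same abelianization fact) to show $m_S' = m_{\tilde{S}}$. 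You instead encode the index-two inclusion $S \subseteq \tilde{S}$ as a single $\mbz/2$-valued character $\chi$ on $\Gamma(M)$, split it into local components, kill the odd components directly (pro-$\ell$ when $v_\ell(M) \geq 1$, topological generation of $\SL_2(\mbz_\ell)$ by unipotents of $\ell$-power order otherwise), and reduce the entire lemma to one $2$-adic statement --- every continuous order-two quotient of $\SL_2(\mbz_2)$ factors through $\SL_2(\mbz/2\mbz)$ --- which you prove from scratch via Gauss decomposition and the Weyl identity $h(a) = w(a)w(1)^{-1}$. What each approach buys: yours is self-contained (no Goursat, no Zywina input, and no citation of the $\mbz/12$ abelianization, of which your $2$-adic computation is in effect a direct proof of exactly the fragment needed), and the character formalism shows in one stroke that $N/M$ is a power of $2$, thereby subsuming the paper's separate $m_S' = m_{\tilde{S}}$ step; the paper's route is shorter on the page because it leverages known results, and its Goursat/fibered-product machinery is reused throughout Section~\ref{Section:FiniteSearch}, so nothing is wasted there. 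One cosmetic simplification in your final computation: you do not actually need the common value $\tau$ of the unit-argument unipotents --- since $x \mapsto [\ell(x)]$ is a homomorphism $\mbz_2 \rightarrow \SL_2(\mbz_2)^{\ab}\otimes\mbz/2\mbz$ killing $2\mbz_2$, one gets $[h(a)] = \bigl(2[u(a)] + [\ell(-a^{-1})]\bigr) - \bigl(2[u(1)] + [\ell(-1)]\bigr) = [\ell(-a^{-1})] - [\ell(-1)] = 0$ directly, without comparing $[u(x)]$ to $[\ell(y)]$ via Weyl conjugation.
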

\begin{proof}
First, we set $S := G \cap \SL_2(\hat{\mbz})$, so that $\tilde{S} = \tilde{G} \cap \SL_2(\hat{\mbz})$ as well; furthermore, we make the abbreviations
\begin{equation} \label{defofmsubSandmsubStilde}
\begin{split}
m_S :=& m_{\SL_2}(S) \quad\quad\quad m_{\tilde{S}} := m_{\SL_2}(\tilde{S}) \\
=& m_{\SL_2}(G) \quad\quad\quad\quad\;\; \,= m_{\SL_2}(\tilde{G}).
\end{split}
\end{equation}
Let $p$ be a prime as in the statement of the lemma, let $p^\ga$ be the exact power of $p$ dividing $m_S$ and let us write $m_S = p^\ga m_S'$, where $p \nmid m_S'$ and $m_{\tilde{S}} \mid m_S'$.  By definition of $m_{\tilde{S}}$, under the isomorphism of the Chinese Remainder Theorem, we have
\[
S(m_S) \subseteq \tilde{S}(m_S) \simeq \tilde{S}(m_S') \times \SL_2(\mbz/p^\ga\mbz),
\]
In light of Lemma \ref{goursatlemma}, there are 3 possibilities for the index two subgroup $S(m) \subseteq \tilde{S}(m_S)$:
\begin{equation} \label{threepossibilities}
\begin{split}
S(m_S) &= S(m_S') \times \SL_2(\mbz/p^\ga \mbz) \quad\quad\quad\;\;\, [\tilde{S}(m_{S}') : S(m_{S}') ] = 2 \\
S(m_S) &= \tilde{S}(m_S') \times S(p^\ga) \quad\quad\quad\quad\;\, [\SL_2(\mbz/p^\ga\mbz) : S(p^\ga) ] = 2 \\
S(m_S) &= \tilde{S}(m_S') \times_\psi \SL_2(\mbz/p^\ga\mbz) \quad\quad | \psi_{p^\ga}(\SL_2(\mbz/p^\ga\mbz)) | = 2.
\end{split}
\end{equation}
The first possibility in \eqref{threepossibilities} would imply that $m_S$ divides $m_S'$, a contradiction.  The second possibility would imply the existence of a proper subgroup $S := \pi_{\SL_2}^{-1}\left( S(p^\ga) \right) \subsetneq \SL_2(\hat{\mbz})$ satisfying $\tilde{S} = \SL_2(\hat{\mbz})$, contradicting Lemma \ref{zywinalemma}.  We thus conclude that only the third possibility can occur:
\[
S(m_S) = \tilde{S}(m_S') \times_\psi \SL_2(\mbz/p^\ga\mbz) \quad\quad | \psi_{p^\ga}(\SL_2(\mbz/p^\ga\mbz)) | = 2.
\]
We now consider the map $\psi_{p^\ga} : \SL_2(\mbz/p^\ga\mbz) \longrightarrow \{ \pm 1 \}$.  Using the well-known fact that the abelianization map $\SL_2(\hat{\mbz}) \longrightarrow \mbz/12\mbz$ factors as 
\begin{equation} \label{abelianaztionmap}
\begin{tikzcd}
\SL_2(\hat{\mbz}) \rar{\red \times \red} & \SL_2(\mbz/4\mbz) \times \SL_2(\mbz/3\mbz) \rar{\ab \times \ab} & \mbz/4\mbz \times \mbz/3\mbz,
\end{tikzcd}
\end{equation}
it follows that $p = 2$ and that $\ker \psi_{p^\ga} = \pi_{\SL_2}^{-1}(A_3)$, where $A_3 \subseteq S_3 \simeq \SL_2(\mbz/2\mbz)$ is the unique subgroup of index 2.  Since the $\SL_2$-level of $S$ is $2^\ga m_S'$ and the map $\psi_{2^\ga}$ factors through reduction modulo $2$, we must then have $\ga = 1$.  To finish the proof, we will show that $m_S' = m_{\tilde{S}}$.  Suppose for the sake of contradiction that \begin{equation} \label{msubSprimegtmsubtildeS}
m_S' > m_{\tilde{S}}.  
\end{equation}
The hypothesis that $m_S = 2 m_S'$ implies that $\ker \psi_{m_S'} \subseteq \tilde{S}(m_S')$ is an index 2 subgroup whose image at any lower level $m_S''$ is all of $\tilde{S}(m_S'')$.  In particular, fixing any (necessarily odd) prime $p$ dividing $m_S'$, we have that $\ker \psi_{m_S'} (m_S'/p) = \tilde{S}(m_S'/p)$, and since $\ker \left( \SL_2(\mbz/p^2\mbz) \rightarrow \SL_2(\mbz/p\mbz) \right)$ is a $p$-group and $\ker \psi_{m_S'}(m_S') \subseteq \tilde{S}(m_S')$ is an index two subgroup, it follows that $m_S'$ must be square-free.  Thus, by \eqref{msubSprimegtmsubtildeS} there must be a square-free number $m_S''$ satisfying $m_S' = m_{\tilde{S}}m_S''$ and with
\begin{equation} \label{kerpsiasfiberedproduct}
\ker \psi_{m_S'} \simeq \tilde{S}(m_{\tilde{S}}) \times_{\phi} \SL_2(\mbz/m_S''\mbz),
\end{equation}
where the image of $\phi$ is a group of order $2$.  Again by \eqref{abelianaztionmap}, any non-trivial image of $\SL_2(\mbz/m_S''\mbz)$ must have order divisible by $3$, and thus the fibered product \eqref{kerpsiasfiberedproduct} is a full cartesian product, so that $\ker \psi_{m_S'} = \tilde{S}(m_S')$, a contradiction.
This implies that $m_S' = m_{\tilde{S}}$, proving the lemma.
\end{proof}
We now prove a lemma that handles the ``vertical'' situation.
\begin{lemma} \label{verticallemma}
Let $G \subseteq \GL_2(\hat{\mbz})$ be an open subgroup, and suppose that $m_{\SL_2}(\tilde{G}) < m_{\SL_2}(G)$ and that
\[
m_{\SL_2}(G) \mid m_{\SL_2}(\tilde{G})^\infty,
\]
i.e. that any prime $p$ dividing $m_{\SL_2}(G)$ must also divide $m_{\SL_2}(\tilde{G})$.  We then have that $m_{\SL_2}(\tilde{G})$ is even and $m_{\SL_2}(G) = 2m_{\SL_2}(\tilde{G})$.
\end{lemma}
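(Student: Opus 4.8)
The plan is to run the prime-by-prime analogue of the proof of \lemref{horizontallemma}, replacing the Goursat splitting there (which separated off a \emph{new} prime) by a direct study of the index-two character on each local factor of the principal congruence kernel. As before, set $S := G \cap \SL_2(\hat{\mbz})$ and $\tilde{S} := \tilde{G} \cap \SL_2(\hat{\mbz}) = \langle S, -I \rangle$, and write $m_S := m_{\SL_2}(G)$ and $m_{\tilde{S}} := m_{\SL_2}(\tilde{G})$. Since $m_{\tilde{S}} < m_S$ we have $-I \notin G$, so $[\tilde{S} : S] = 2$ and $S = \ker \psi$ for a unique continuous surjective character $\psi : \tilde{S} \to \{ \pm 1 \}$. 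Because $\tilde{S} = \pi_{\SL_2}^{-1}(\tilde{S}(m_{\tilde{S}}))$, the principal congruence kernel $K := \ker\left( \SL_2(\hat{\mbz}) \to \SL_2(\mbz/m_{\tilde{S}}\mbz) \right) = \prod_p K_p$ lies in $\tilde{S}$, where $K_p = \ker\left( \SL_2(\mbz_p) \to \SL_2(\mbz/p^{v_p(m_{\tilde{S}})}\mbz) \right)$, and $\psi|_K$ is nontrivial, for otherwise $\psi$ would factor through level $m_{\tilde{S}}$ and force $m_S \mid m_{\tilde{S}}$.

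First I would localize the character. For each odd prime $p$, I claim $\psi|_{K_p} = 1$: if $p \nmid m_{\tilde{S}}$ then $K_p = \SL_2(\mbz_p)$, whose abelianization is $\mbz/3\mbz$ (for $p = 3$) or trivial (for $p \geq 5$) by the abelianization structure recorded in \eqref{abelianaztionmap}, so it has no surjection onto $\{ \pm 1 \}$; if $p \mid m_{\tilde{S}}$ then $K_p$ is a pro-$p$ group and again admits no nontrivial $\{ \pm 1 \}$-valued character. Consequently the nontriviality of $\psi|_K$ forces $\psi|_{K_2} \neq 1$, whence $2 \mid m_S$; the vertical hypothesis $m_S \mid m_{\tilde{S}}^\infty$ then gives $2 \mid m_{\tilde{S}}$. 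This already proves that $m_{\tilde{S}}$ is even; writing $a := v_2(m_{\tilde{S}}) \geq 1$ we have $K_2 = \ker\left( \SL_2(\mbz_2) \to \SL_2(\mbz/2^a\mbz) \right)$.

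It remains to pin down the $2$-adic level of $S$. The local computation above shows $\psi$ kills $\ker\left( \SL_2(\mbz_p) \to \SL_2(\mbz/p^{v_p(m_{\tilde{S}})}\mbz) \right)$ at every odd $p$, so it suffices to prove that $\psi$ also kills $K_2^{(1)} := \ker\left( \SL_2(\mbz_2) \to \SL_2(\mbz/2^{a+1}\mbz) \right)$; granting this, the full principal congruence subgroup of level $2 m_{\tilde{S}}$ lies in $S$, giving $m_S \mid 2 m_{\tilde{S}}$, and together with $m_{\tilde{S}} \mid m_S$ and $m_{\tilde{S}} < m_S$ this yields $m_S = 2 m_{\tilde{S}}$. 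Since $\psi|_{K_2}$ is $\{ \pm 1 \}$-valued it kills the Frattini subgroup $\Phi(K_2)$ (the closure of the subgroup generated by squares and commutators), so the whole matter reduces to the purely $2$-adic containment
\[
K_2^{(1)} \subseteq \Phi(K_2).
\]
This I would establish through the congruence filtration, using that each graded piece $K_2^{(j)}/K_2^{(j+1)}$ is isomorphic to $\mathfrak{sl}_2(\mathbb{F}_2)$ via $1 + 2^{a+j} M \mapsto M \bmod 2$, and that for $a + j \geq 2$ the squaring map induces the identity $1 + 2^{a+j} M \mapsto 1 + 2^{a+j+1} M$ on these pieces (the quadratic term $2^{2(a+j)} M^2$ being absorbed mod $2^{a+j+2}$), so that squares of $K_2^{(j)}$ surject onto $K_2^{(j+1)}/K_2^{(j+2)}$. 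A downward induction then gives $K_2^{(1)} \subseteq \Phi(K_2) K_2^{(j)}$ for every $j$, and intersecting over $j$ (using that $\Phi(K_2)$ is closed and $\bigcap_j K_2^{(j)} = \{ 1 \}$) yields the claim.

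The main obstacle is the base case $a = 1$ of this induction, where $a + j = 1$ and the quadratic term survives: squaring sends $1 + 2M \mapsto 1 + 4(M + M^2)$, and one must check that, as $M$ ranges over matrices with $1 + 2M \in \SL_2(\mbz_2)$ (so $\tr M \equiv 0 \bmod 2$), the residues $M + M^2 \bmod 2$ generate all of $\mathfrak{sl}_2(\mathbb{F}_2)$. A direct computation gives $M + M^2 \equiv \begin{pmatrix} qr & q \\ r & qr \end{pmatrix} \bmod 2$, where $q, r$ are the off-diagonal entries of $M$, and the four resulting residues generate $\mathfrak{sl}_2(\mathbb{F}_2)$, so that $K_2^{(1)} \subseteq \Phi(K_2) K_2^{(2)}$ even in this case. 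I expect this $a = 1$ computation, rather than the formal reduction, to be the only genuinely delicate point; everything else is bookkeeping with the congruence filtration and the abelianization of $\SL_2(\mbz_p)$.
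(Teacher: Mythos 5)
Your proof is correct, and it reaches both conclusions (evenness of $m_{\tilde{S}}$ and $m_S = 2m_{\tilde{S}}$) by a route that shares the paper's two-step skeleton --- odd primes cannot carry the index-two level jump, and at $2$ a squaring argument through the congruence filtration caps the jump at a single step --- but whose implementation is genuinely different. The paper works at finite level throughout: it first records that $S(d) = \tilde{S}(d)$ for every proper intermediate divisor $d$ (its \eqref{SofdhasindextwointildeSofd}), rules out odd $p$ with $m_{\tilde{S}} \mid m_S/p$ using only that a one-prime-step reduction kernel is an \emph{abelian} $p$-group, and then, assuming $\ga > 1$ in $m_S = 2^{\ga} m_{\tilde{S}}$, squares the specific nilpotent generators $I + m_{\tilde{S}}X$ with $X \in \mc{K}$, $X^2 = 0$ (via \eqref{shapeofkernelover2}) to push congruence kernels into $S$ inductively and reach a contradiction. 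You instead globalize: writing $S = \ker\psi$ for a quadratic character of $\tilde{S}$ and localizing over $\prod_p \SL_2(\mbz_p)$, you kill $\psi$ at odd $p$ by the pro-$p$ property (for $p \mid m_{\tilde{S}}$) and by $\SL_2(\mbz_p)^{\ab} \in \{1, \mbz/3\mbz\}$ (for $p \nmid m_{\tilde{S}}$, consistent with \eqref{abelianaztionmap}); these inputs are slightly heavier than the abelian-kernel observation, but in exchange your argument dispenses with \eqref{SofdhasindextwointildeSofd} entirely and proves the divisibility $m_S \mid 2m_{\tilde{S}}$ \emph{directly}, via the Frattini containment $K_2^{(1)} \subseteq \Phi(K_2)$, rather than by contradiction from $\ga > 1$. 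The one place where your route is forced to do extra work is exactly where the paper's nilpotent lifts pay off: choosing $X^2 \equiv 0 \bmod 2$ makes the quadratic term in $(I + m_{\tilde{S}}X)^2$ vanish uniformly, whereas you must confront it in the base case $a = v_2(m_{\tilde{S}}) = 1$. Your computation there is right: for $\tr M \equiv 0 \bmod 2$ one gets $M + M^2 \equiv \left(\begin{smallmatrix} qr & q \\ r & qr \end{smallmatrix}\right) \bmod 2$, and the four residues are $0$, $e_{12}$, $e_{21}$ and $I + e_{12} + e_{21}$, which indeed generate $\mathfrak{sl}_2(\mbf_2)$ (their sum over the three nonzero ones is $I$) --- these are, mod $2$, precisely the paper's set $\mc{K}$, so the two arguments converge on the same generating configuration. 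The remaining steps you flag as bookkeeping are genuinely fine: $\psi$ kills the closed subgroup $\Phi(K_2)$ because it is $\{\pm 1\}$-valued and continuous, $\bigcap_j \Phi(K_2)K_2^{(j)} = \Phi(K_2)$ since $\Phi(K_2)$ is closed and the $K_2^{(j)}$ form a neighborhood basis, and nontriviality of $\psi$ on $K = \prod_p K_p$ passes to some local factor by density of the restricted product. In short: same mechanism at the prime $2$, different and somewhat more structural packaging overall, with your version trading the paper's elementary finite-level bookkeeping for standard pro-$p$ and abelianization facts.
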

\begin{proof}
As before, we set $S := G \cap \SL_2(\hat{\mbz})$, so that $\tilde{S} = \tilde{G} \cap \SL_2(\hat{\mbz})$, and also define $m_S$ and $m_{\tilde{S}}$ by \eqref{defofmsubSandmsubStilde}.  Since $[\tilde{S} : S] = 2$ and by definitions of $m_S$ and $m_{\tilde{S}}$, for any divisor $d$ of $m_S$, we have
\begin{equation} \label{SofdhasindextwointildeSofd}
m_{\tilde{S}} \mid d \mid m_S \text{ and } d < m_S \; \Longrightarrow \; S(d) = \tilde{S}(d).
\end{equation}
Let $p$ be any prime for which
\begin{equation} \label{msubtildeSdividesmsubSoverp}
m_{\tilde{S}} \; \text{ divides } \; \frac{m_S}{p}.
\end{equation}
Since the kernel
\[
\ker \left( \SL_2(\mbz/m_S\mbz) \rightarrow \SL_2(\mbz/(m_S/p)\mbz) \right)
\]
is an abelian $p$-group, it follows from \eqref{SofdhasindextwointildeSofd} that any $p$ satisfying \eqref{msubtildeSdividesmsubSoverp} must be even, and thus $m_S = 2^\ga m_{\tilde{S}}$ for some $\ga \geq 1$.  We now show that $\ga = 1$.  Note that each matrix $X$ in the set
\[
\mc{K} := \left\{ \begin{pmatrix} 0 & 1 \\ 0 & 0 \end{pmatrix}, \begin{pmatrix} 0 & 0 \\ 1 & 0 \end{pmatrix}, \begin{pmatrix} 1 & 1 \\ -1 & -1 \end{pmatrix} \right\} \subseteq M_{2\times 2}(\mbz)
\]
satisfies $X^2 = 0$, and also (recall that $2 \mid m_{\tilde{S}}$) that
\begin{equation} \label{shapeofkernelover2}
\ker\left( \SL_2(\mbz/2^nm_{\tilde{S}} \mbz) \rightarrow \SL_2(\mbz/2^{n-1}m_{\tilde{S}} \mbz) \right) = \left\langle \{ I + 2^{n-1}m_{\tilde{S}} X \mod 2^nm_{\tilde{S}} : X \in \mc{K} \}\right\rangle \quad\quad (n \geq 1).
\end{equation}
If $\ga > 1$ then, by \eqref{SofdhasindextwointildeSofd}, $\ker\left( \SL_2(\mbz/2m_{\tilde{S}} \mbz) \rightarrow \SL_2(\mbz/m_{\tilde{S}} \mbz) \right) \subseteq S(2m_{\tilde{S}})$.  Fixing any $X \in \mc{K}$, we then have 
\[
I + m_{\tilde{S}}X \mod 2m_{\tilde{S}} \in S(2m_{\tilde{S}}).  
\]
Replacing $X$ by an appropriate lift in $M_{2\times 2}(\mbz_2)$ of $X \mod 2$ (which must still satisfy $X^2 \equiv 0 \mod 2$), we may assume that $I + m_{\tilde{S}}X \mod 4 m_{\tilde{S}} \in S(4 m_{\tilde{S}})$, and so then
\[
(I + m_{\tilde{S}}X)^2 = I + 2m_{\tilde{S}} X + m_{\tilde{S}}^2 X^2 \equiv I + 2m_{\tilde{S}} X \mod 4m_{\tilde{S}} \in S(4m_{\tilde{S}}), 
\]
and by \eqref{shapeofkernelover2}, we then see that $\ker\left( \SL_2(\mbz/4m_{\tilde{S}} \mbz) \rightarrow \SL_2(\mbz/2m_{\tilde{S}} \mbz) \right) \subseteq S(4m_{\tilde{S}})$.  Continuing inductively, we would then conclude that $S(2^\ga m_{\tilde{S}}) = \pi_{\SL_2}^{-1}(S(m_{\tilde{S}})) = \tilde{S}(2^\ga m_{\tilde{S}})$, a contradiction.  Thus, we must have $S(2m_{\tilde{S}}) \subsetneq \tilde{S}(2m_{\tilde{S}})$, and so $m_S = 2m_{\tilde{S}}$, as asserted.
\end{proof}
\noindent \emph{Proof of Lemma \ref{Mto2timesMlemma}.}  
Lemma \ref{Mto2timesMlemma} follows immediately from Lemmas \ref{horizontallemma} and \ref{verticallemma}. \hfill $\Box$ \\

Applying Lemma \ref{Mto2timesMlemma} to \eqref{cumminspaulilistoflevels}, we obtain the following corollary.
\begin{cor} \label{levelincreaseboundcor}
Let $G \in \mc{G}(0)$.  We then have
\[
m_{\SL_2}(G) \in \left\{ \begin{matrix} 1, 2, 3, 4, 5, 6, 7, 8, 9, 10, 11, 12, 13, 14, 15, 16, 18, 20, 21, 22, 24, \\ 25, 26, 27, 28, 30, 32, 36, 40, 42, 48, 50, 52, 54, 56, 60, 64, 72, 96 \end{matrix} \right\}.
\]
\end{cor}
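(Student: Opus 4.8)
The plan is to obtain the corollary directly, combining the Cummins--Pauli level list \eqref{cumminspaulilistoflevels} with Lemma~\ref{Mto2timesMlemma}; essentially no new argument is required beyond a finite bookkeeping computation. Let $G \in \mc{G}(0)$, so that $\genus(X_{\tilde{G}}) = 0$. First I would invoke \eqref{cumminspaulilistoflevels} directly to conclude that $m_{\SL_2}(\tilde{G})$ lies in the explicit set
\[
L := \{ 1, 2, 3, 4, 5, 6, 7, 8, 9, 10, 11, 12, 13, 14, 15, 16, 18, 20, 21, 24, 25, 26, 27, 28, 30, 32, 36, 48 \}
\]
read off from the tables of \cite{cumminspauli}.

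Next I would apply Lemma~\ref{Mto2timesMlemma}, which asserts that $m_{\SL_2}(G)/m_{\SL_2}(\tilde{G}) \in \{ 1, 2 \}$. It follows immediately that $m_{\SL_2}(G)$ belongs to $L \cup 2L$, where $2L := \{ 2m : m \in L \}$. Carrying out this union explicitly, the only values in $2L$ not already present in $L$ are $22, 40, 42, 50, 52, 54, 56, 60, 64, 72$ and $96$, and adjoining these to $L$ produces precisely the $39$-element set displayed in the statement of the corollary.

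The only point requiring any care --- and it is entirely routine --- is the verification that $L \cup 2L$ coincides with the stated list; the genuine content has already been supplied by Lemma~\ref{Mto2timesMlemma} (whose proof rests on the horizontal and vertical Lemmas~\ref{horizontallemma} and~\ref{verticallemma}) together with the effective resolution of Rademacher's conjecture in \cite{cumminspauli}. I would emphasize that the refined dichotomy between the ``horizontal'' and ``vertical'' cases plays no further role here: in either case the ratio is bounded by $2$, so the crude containment $m_{\SL_2}(G) \in L \cup 2L$ is all that is needed. That dichotomy merely records \emph{when} the doubling $m_{\SL_2}(G) = 2\,m_{\SL_2}(\tilde{G})$ can occur --- namely via the new prime $2$ when $m_{\SL_2}(\tilde{G})$ is odd, or via an extra factor of $2$ when $m_{\SL_2}(\tilde{G})$ is even --- a refinement that is not required for the present assertion.
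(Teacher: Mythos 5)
Your proposal is correct and is essentially identical to the paper's proof, which likewise obtains the corollary by applying Lemma~\ref{Mto2timesMlemma} to the Cummins--Pauli list \eqref{cumminspaulilistoflevels}, i.e.\ by forming $L \cup 2L$; your enumeration of the new elements $22, 40, 42, 50, 52, 54, 56, 60, 64, 72, 96$ checks out and yields exactly the $39$-element set in the statement. Your closing observation is also accurate: the horizontal/vertical dichotomy of Lemmas~\ref{horizontallemma} and~\ref{verticallemma} is used only to establish the ratio bound in Lemma~\ref{Mto2timesMlemma}, and the corollary itself needs nothing beyond $m_{\SL_2}(G)/m_{\SL_2}(\tilde{G}) \in \{1,2\}$.
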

The proof of Proposition \ref{keypropositionformainthm} involves a group theoretical analysis together with a MAGMA computation. We now develop the group theory part.

\subsection{Lemmas on fibered products} \label{fiberedproductlemmassection}

Let $G_1$ and $G_2$ be groups, let
\[
\begin{split}
\phi_i : G_i \longrightarrow \Gamma_\phi \\
\psi_i : G_i \longrightarrow \Gamma_\psi
\end{split}
\]
be surjective group homomorphisms and let
\[
\begin{split}
G_1 \times_\phi G_2 &:= \{ (g_1,g_2) \in G_1 \times G_2 : \; \phi_1(g_1) = \phi_2(g_2) \}, \\
G_1 \times_\psi G_2 &:= \{ (g_1,g_2) \in G_1 \times G_2 : \; \psi_1(g_1) = \psi_2(g_2) \}
\end{split} 
\]
be the associated fibered products.  We call $\Gamma_{\phi}$ the \emph{common quotient} associated to $G_1 \times_\phi G_2$, and likewise with $\Gamma_{\psi}$.  

\begin{lemma} \label{thecorollary}
In the above setting, we have
\begin{equation} \label{thecorollaryequation}
G_1 \times_\phi G_2 = G_1 \times_\psi G_2 \; \Longleftrightarrow \; \ker \psi_1 \times \ker \psi_2 \subseteq G_1 \times_\phi G_2 \subseteq G_1 \times_\psi G_2.
\end{equation}
\end{lemma}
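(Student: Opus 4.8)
The plan is to dispatch the forward implication immediately and then reduce the reverse implication to an elementary statement about subgroups of a diagonal subgroup.

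For the $(\Rightarrow)$ direction, I would first note that one \emph{always} has $\ker\psi_1 \times \ker\psi_2 \subseteq G_1 \times_\psi G_2$: if $\psi_1(g_1) = 1 = \psi_2(g_2)$ then $(g_1,g_2)$ trivially satisfies the defining condition $\psi_1(g_1) = \psi_2(g_2)$. Hence, assuming $G_1 \times_\phi G_2 = G_1 \times_\psi G_2$, the right containment $G_1 \times_\phi G_2 \subseteq G_1 \times_\psi G_2$ is automatic, and the left containment follows from $\ker\psi_1 \times \ker\psi_2 \subseteq G_1 \times_\psi G_2 = G_1 \times_\phi G_2$. So the right-hand side of \eqref{thecorollaryequation} holds.

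The substance is the $(\Leftarrow)$ direction, where the key move is to pass to the quotient by $K := \ker\psi_1 \times \ker\psi_2$. Since each $\ker\psi_i$ is normal in $G_i$, the group $K$ is normal in $G_1 \times G_2$, and by hypothesis $K \subseteq G_1 \times_\phi G_2 \subseteq G_1 \times_\psi G_2$, so all three quotients are legitimate. Using surjectivity of $\psi_1$ and $\psi_2$, I would identify $(G_1 \times G_2)/K$ with $\Gamma_\psi \times \Gamma_\psi$ via $(g_1,g_2)K \mapsto (\psi_1(g_1),\psi_2(g_2))$; under this isomorphism the image of $G_1 \times_\psi G_2$ is exactly the diagonal $\Delta = \{(\gamma,\gamma) : \gamma \in \Gamma_\psi\}$, so that $(G_1 \times_\psi G_2)/K \cong \Gamma_\psi$.

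The remaining step shows that $(G_1 \times_\phi G_2)/K$ already fills out this diagonal. Because $\phi_2$ is surjective, the fibered product $G_1 \times_\phi G_2$ projects onto $G_1$, i.e. $\pr_1(G_1 \times_\phi G_2) = G_1$; composing with the surjection $\psi_1$ shows that the first-coordinate map $(G_1 \times_\phi G_2)/K \longrightarrow \Gamma_\psi$ is onto. But $(G_1 \times_\phi G_2)/K$ lies inside $\Delta$, on which the first-coordinate projection is an isomorphism, so a subgroup of $\Delta$ surjecting onto $\Gamma_\psi$ must equal all of $\Delta$. Thus $(G_1 \times_\phi G_2)/K = \Delta = (G_1 \times_\psi G_2)/K$, and since both groups contain $K$, the correspondence theorem gives $G_1 \times_\phi G_2 = G_1 \times_\psi G_2$. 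I do not expect a genuine obstacle; the only point needing care is the bookkeeping ensuring $K$ is normal and contained in every relevant group so the quotients embed into $\Gamma_\psi \times \Gamma_\psi$, and recognizing that full projectivity of a fibered product onto each factor is precisely what forces a subgroup of the diagonal to be saturated.
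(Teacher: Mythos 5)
Your proof is correct, but it takes a genuinely different route from the paper's. The paper works directly with the fibering data: from $\ker\psi_1 \times \ker\psi_2 \subseteq G_1 \times_\phi G_2$ it deduces $\ker\psi_i \subseteq \ker\phi_i$, from the other containment it deduces $\ker\phi_1 \times \ker\phi_2 \subseteq G_1 \times_\psi G_2$ and hence $\ker\phi_i \subseteq \ker\psi_i$, concluding $\ker\phi_i = \ker\psi_i$; this yields isomorphisms $\eta_i \colon \Gamma_\phi \to \Gamma_\psi$ with $\psi_i = \eta_i \circ \phi_i$, and a short contradiction argument (choosing $\gamma$ with $\eta_1(\gamma) \neq \eta_2(\gamma)$ and lifting it to both sides) forces $\eta_1 = \eta_2$, giving equality of the two fibered products. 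You instead quotient by $K = \ker\psi_1 \times \ker\psi_2$, identify $(G_1 \times G_2)/K$ with $\Gamma_\psi \times \Gamma_\psi$, recognize $G_1 \times_\psi G_2$ as exactly the preimage of the diagonal, and then observe that the image of $G_1 \times_\phi G_2$ is a subgroup of the diagonal surjecting onto the first coordinate (using that a fibered product with surjective structure maps projects onto each factor), hence equals the diagonal; $K$-saturation plus the correspondence theorem then finishes. Notably, your argument never needs the kernel comparison $\ker\phi_i = \ker\psi_i$ and never mentions $\Gamma_\phi$ at all — the hypothesis $K \subseteq G_1 \times_\phi G_2$ enters only through the saturation step $H = HK$, which is where your proof would fail without it. What each approach buys: the paper's computation makes explicit how the two fibering data relate (equal kernels, a common induced isomorphism of quotients), which is in the spirit of its subsequent manipulations of fibering maps such as in its projection lemma for fibered products; yours is shorter and more conceptual, isolating the lemma as the statement that a fibered product over $\psi$ is precisely a $K$-saturated subgroup of $G_1 \times G_2$ lying over the diagonal, and that a sub-fibered-product containing $K$ with full projections must be everything.
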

\begin{proof}
Since ``$\Rightarrow$'' is trivial, we prove the ``$\Leftarrow$'' direction.  The condition $\ker \psi_1 \times \ker \psi_2 \subseteq G_1 \times_\phi G_2$ implies that $\ker \psi_i \subseteq \ker\phi_i$ for each $i \in \{ 1, 2 \}$.  On the other hand, the containment $G_1 \times_\phi G_2 \subseteq G_1 \times_\psi G_2$ implies that $\ker \phi_1 \times \ker \phi_2 \subseteq G_1 \times_\psi G_2$, which forces $\ker \phi_i \subseteq \ker \psi_i$ for each $i$.  Thus we have
\[
\ker \phi_i = \ker \psi_i \quad\quad \left( i \in \{ 1, 2 \} \right).
\]
It follows that there are isomorphisms $\eta_i : \Gamma_{\phi} \rightarrow \Gamma_{\psi}$ such that, for each $i \in \{ 1, 2 \}$, we have $\psi_i = \eta_i \phi_i$.  Now, if there exists $\gamma \in \Gamma_\phi$ with $\eta_1(\gamma) \neq \eta_2(\gamma)$, then, choosing $g_1 \in G_1$ and $g_2 \in G_2$ with $\phi_i(g_i) = \gamma$, we find that $(g_1,g_2) \in G_1 \times_\phi G_2$ but $(g_1,g_2) \notin G_1 \times_\psi G_2$, a contradiction.  Thus, $\eta_1 = \eta_2 =: \eta$, and it follows that
\[
G_1 \times_\phi G_2 = G_1 \times_{(\eta \phi_1, \eta \phi_2)} G_2 = G_1 \times_\psi G_2,
\]
as asserted, establishing the ``$\Leftarrow$'' direction and proving the lemma.
\end{proof}
The following lemma is key throughout.  Let $G_1$ and $G_2$ be groups, together with surjective group homomorphisms
\[
\psi_i : G_i \longrightarrow \Gamma
\]
onto a common group $\Gamma$, and let $G_1 \times_\psi G_2$ be the corresponding fibered product.  For $i \in \{1, 2\}$, let $\pi_i : G_i \twoheadrightarrow \ol{G}_i$ be a surjective group homomorphisms (which we will denote by $g_i \mapsto \ol{g}_i$) and consider the induced surjection 
\[
\pi : G_1 \times G_2 \longrightarrow \ol{G}_1 \times \ol{G}_2, \quad (g_1,g_2) \mapsto (\ol{g}_1, \ol{g}_2)
\]
(in other words, $\pi := \pi_1 \times \pi_2$).  The following lemma describes explicitly the image of $G$ under $\pi$.  Define the quotient group $\ol{\Gamma}$ by
\[
\ol{\Gamma} := \frac{\Gamma}{\psi_1(\ker \pi_1)\psi_2(\ker \pi_2)},
\]
let $\varpi : \Gamma \longrightarrow \ol{\Gamma}$ be the canonical surjection and let $\ol{\psi}_i := \varpi \circ \psi_i$.  Note that $\ol{\psi}_i$ induces a well defined surjective homomorphism $\ol{G}_i \longrightarrow \ol{\Gamma}$ (via $\ol{g}_i \mapsto \ol{\psi}_i(g_i)$), which we will continue to denote by $\ol{\psi}_i$.  Note the functional equation
\begin{equation} \label{functionaleqnwithpsiandpsibar}
\varpi \circ \psi_i = \ol{\psi}_i \circ \pi_i.
\end{equation}
We let $\ol{\psi}$ denote the pair $(\ol{\psi}_1,\ol{\psi}_2)$ and $\ol{G}_1 \times_{\ol{\psi}} \ol{G}_2$ the corresponding fibered product group. 
\begin{lemma} \label{onesideprojectionlemma}
Let the $G_1$ and $G_2$ be groups and consider the fibered product $G_1 \times_\psi G_2$ as described above.  Then, with the notation just outlined, we have
\begin{equation*} 
\pi(G_1 \times_\psi G_2) = \ol{G}_1 \times_{\ol{\psi}} \ol{G}_2.
\end{equation*}
\end{lemma}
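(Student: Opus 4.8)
The plan is to establish the asserted equality by proving the two inclusions $\pi(G_1 \times_\psi G_2) \subseteq \ol{G}_1 \times_{\ol{\psi}} \ol{G}_2$ and $\ol{G}_1 \times_{\ol{\psi}} \ol{G}_2 \subseteq \pi(G_1 \times_\psi G_2)$ separately. Before doing so, I would record that $\ol{\Gamma}$ is well-defined: since $\ker \pi_i \trianglelefteq G_i$ and $\psi_i$ is surjective, each $\psi_i(\ker\pi_i)$ is normal in $\Gamma$, hence the product $\psi_1(\ker\pi_1)\psi_2(\ker\pi_2)$ is itself a normal subgroup of $\Gamma$, and by construction it equals $\ker\varpi$.

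The forward inclusion is a direct diagram chase. Given $(g_1,g_2) \in G_1 \times_\psi G_2$, so that $\psi_1(g_1) = \psi_2(g_2)$, I would apply $\varpi$ and use the functional equation \eqref{functionaleqnwithpsiandpsibar} to compute $\ol{\psi}_1(\ol{g}_1) = \varpi(\psi_1(g_1)) = \varpi(\psi_2(g_2)) = \ol{\psi}_2(\ol{g}_2)$, which is exactly the defining condition for the image $(\ol{g}_1,\ol{g}_2) = \pi(g_1,g_2)$ to lie in $\ol{G}_1 \times_{\ol{\psi}} \ol{G}_2$.

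The reverse inclusion is the substantive part, and the step I expect to be the main obstacle. Given $(\ol{h}_1,\ol{h}_2) \in \ol{G}_1 \times_{\ol{\psi}} \ol{G}_2$, I would first use surjectivity of each $\pi_i$ to choose arbitrary lifts $g_i' \in G_i$ with $\pi_i(g_i') = \ol{h}_i$; these need not satisfy the fibered-product condition over $\Gamma$, but the functional equation gives $\varpi(\psi_1(g_1')) = \ol{\psi}_1(\ol{h}_1) = \ol{\psi}_2(\ol{h}_2) = \varpi(\psi_2(g_2'))$, so that the discrepancy $\psi_1(g_1')\psi_2(g_2')^{-1}$ lies in $\ker\varpi = \psi_1(\ker\pi_1)\psi_2(\ker\pi_2)$. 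The key move is then to write $\psi_1(g_1')\psi_2(g_2')^{-1} = \psi_1(k_1)\psi_2(k_2)$ with $k_i \in \ker\pi_i$, and to correct the lifts by setting $g_1 := k_1^{-1}g_1'$ and $g_2 := k_2 g_2'$. Since $k_i \in \ker\pi_i$, the corrected elements still satisfy $\pi_i(g_i) = \ol{h}_i$, while a short computation using the discrepancy factorization yields $\psi_1(g_1) = \psi_1(k_1)^{-1}\psi_1(g_1') = \psi_2(k_2)\psi_2(g_2') = \psi_2(g_2)$, so that $(g_1,g_2) \in G_1 \times_\psi G_2$ and $\pi(g_1,g_2) = (\ol{h}_1,\ol{h}_2)$.

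The one delicate point is that the definition of $\ol{\Gamma}$ as the quotient by the \emph{product} $\psi_1(\ker\pi_1)\psi_2(\ker\pi_2)$, rather than by one factor alone, is precisely what guarantees that the $\Gamma$-discrepancy factors as a product with one term drawn from each $\psi_i(\ker\pi_i)$; this splitting is exactly what allows the two lifts to be corrected \emph{independently} without disturbing their $\pi_i$-images, and it is where the specific form of $\ol{\Gamma}$ is essential.
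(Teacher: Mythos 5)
Your proof is correct, and it takes a genuinely different route from the paper's. You prove the reverse inclusion by a direct element-wise argument: lift $(\ol{h}_1,\ol{h}_2)$ arbitrarily, observe that the discrepancy $\psi_1(g_1')\psi_2(g_2')^{-1}$ lies in $\ker\varpi = \psi_1(\ker\pi_1)\psi_2(\ker\pi_2)$, factor it as $\psi_1(k_1)\psi_2(k_2)$ with $k_i \in \ker\pi_i$, and correct the two lifts independently. Your computation $\psi_1(k_1)^{-1}\psi_1(g_1') = \psi_2(k_2)\psi_2(g_2')$ is valid in a non-abelian $\Gamma$, and your preliminary remark that each $\psi_i(\ker\pi_i)$ is normal (so the product is a subgroup, indeed the kernel of $\varpi$) is exactly the point that makes the factorization of the discrepancy legitimate. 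The paper argues differently: after the same easy forward inclusion, it notes that $\pi(G_1 \times_\psi G_2)$ surjects onto each factor, invokes Goursat's Lemma (Lemma~\ref{goursatlemma}) to conclude the image is \emph{some} fibered product $\ol{G}_1 \times_\eta \ol{G}_2$, then shows $\ker\ol{\psi}_1 \times \ker\ol{\psi}_2 \subseteq \ol{G}_1 \times_\eta \ol{G}_2$ (by an adjustment of lifts closely related to your correction step, applied one coordinate at a time) and finishes by the rigidity criterion of Lemma~\ref{thecorollary}, which pins down the fibering as $\ol{\psi}$. Your approach is more elementary and self-contained — it establishes surjectivity onto $\ol{G}_1 \times_{\ol{\psi}} \ol{G}_2$ directly, without needing Goursat or the comparison lemma — while the paper's approach buys consistency with its surrounding framework, in which Goursat decompositions and Lemma~\ref{thecorollary} are the standing tools reused throughout Section~\ref{fiberedproductlemmassection}. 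Either argument is complete; yours even isolates correctly the one place where the definition of $\ol{\Gamma}$ as the quotient by the full product $\psi_1(\ker\pi_1)\psi_2(\ker\pi_2)$ is indispensable.
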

\begin{proof}
The containment ``$\subseteq$'' is immediate, since $\psi_1(g_1) = \psi_2(g_2)$ implies that $\varpi(\psi_1(g_1)) = \varpi(\psi_2(g_2))$, and so $\ol{\psi}_1(\ol{g}_1) = \ol{\psi}_2(\ol{g}_2)$.  Furthermore, it follows from the surjectivity of each $\pi_i$ that $\pi(G_1 \times_\psi G_2) \subseteq \ol{G}_1 \times \ol{G}_2$ is a subgroup that projects onto $\ol{G}_1$ and onto $\ol{G}_2$ via the canonical projections.  Thus, by Lemma \ref{goursatlemma},
$
\pi(G_1 \times_\psi G_2) = \ol{G}_1 \times_{\eta} \ol{G}_2
$
for some fibering maps $(\eta_1,\eta_2)$.  Furthermore, we claim that 
\begin{equation} \label{kernelcontainment}
\ker \ol{\psi}_1 \times \ker \ol{\psi}_2 \subseteq \ol{G}_1 \times_{\eta} \ol{G}_2.
\end{equation}
Indeed, it is sufficient to show that $\ker \ol{\psi}_1 \times \{ 1 \} \subseteq \ol{G}_1 \times_{\eta} \ol{G}_2$ and that $\{ 1 \} \times \ker \ol{\psi}_2 \subseteq \ol{G}_1 \times_{\eta} \ol{G}_2$.  Let $\ol{x}_1 \in \ker \ol{\psi}_1$ and let $x_1 \in G_1$ be any lift under $\pi_1$ of $\ol{x}_1$.  By definition of $\ol{\psi}_1$, we may adjust $x_1 \in \pi_1^{-1}(\ol{x}_1)$ so that $\psi_1(x_1) \in \psi_2(\ker \pi_2)$, and thus there exists $k_2 \in \ker \pi_2$ for which $(x_1,k_2) \in G_1 \times_\psi G_2$.  Applying $\pi$, it follows that $(\ol{x}_1,1) \in \ol{G}_1 \times_{\eta} \ol{G}_2$, and so $\ker \ol{\psi}_1 \times \{ 1 \} \subseteq \ol{G}_1 \times_{\eta} \ol{G}_2$; the argument that $\{ 1 \} \times \ker \ol{\psi}_2 \subseteq \ol{G}_1 \times_{\eta} \ol{G}_2$ is similar.  The containment \eqref{kernelcontainment} follows.

Having established that
\[
\ker \ol{\psi}_1 \times \ker \ol{\psi}_2 \subseteq \ol{G}_1 \times_{\eta} \ol{G}_2 \subseteq \ol{G}_1 \times_{\ol{\psi}} \ol{G}_2,
\]
Lemma \ref{thecorollary} now finishes the proof.
\end{proof}

Our final lemma has to do with intersecting fibered products with full cartesian products, and will later be applied to the situation of intersecting with $\SL_2$.  Let $G_1 \times_\psi G_2$ be a fibered product and let $S_i \subseteq G_i$ be subgroups.  It is clear that
\[
\left( G_1 \times_\psi G_2 \right) \cap \left( S_1 \times S_2 \right) = S_1 \times_\psi S_2,
\]
but the canonical projection maps in the right-hand expression may not be surjective, which can cause confusion.  To remedy this, let us say that $\Gamma$ is the common quotient group associated to the fibered product $G_1 \times_\psi G_2$ and put
\[
\Gamma_S := \psi_1(S_1) \cap \psi_2(S_2).
\]
\begin{lemma} \label{SL2contractionlemma}
Let $G_1 \times_\psi G_2$ be a fibered product and let $S_i \subseteq G_i$ be subgroups.  Then
\[
\left( G_1 \times_\psi G_2 \right) \cap \left( S_1 \times S_2 \right) = \psi_1 \vert_{S_1}^{-1}(\Gamma_S)\times_\psi \psi_2 \vert_{S_2}^{-1}(\Gamma_S),
\]
and the canonical projection maps in the right-hand expression are surjective.  Moreover, 
\[
\psi_1\left( \psi_1 \vert_{S_1}^{-1}(\Gamma_S) \right) = \Gamma_S = \psi_2 \left( \psi_2 \vert_{S_2}^{-1}(\Gamma_S) \right).
\]

\end{lemma}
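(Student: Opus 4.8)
The plan is to prove the set equality by a direct double inclusion at the level of elements, then to deduce surjectivity of the two canonical projections from the definition of $\Gamma_S$ as an intersection, and finally to compute the images $\psi_i(\psi_i\vert_{S_i}^{-1}(\Gamma_S))$ by inspection. Throughout I would use only that the image of a subgroup under a homomorphism is a subgroup (so that $\Gamma_S = \psi_1(S_1) \cap \psi_2(S_2)$ is a subgroup of $\Gamma$ and each $\psi_i\vert_{S_i}^{-1}(\Gamma_S)$ is a subgroup of $S_i$), together with Lemma \ref{goursatlemma}, which legitimizes the fibered-product notation once surjectivity is in hand.

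First I would unwind the left-hand side: an element of $(G_1 \times_\psi G_2) \cap (S_1 \times S_2)$ is exactly a pair $(s_1,s_2) \in S_1 \times S_2$ with $\psi_1(s_1) = \psi_2(s_2)$. For any such pair the common value $\psi_1(s_1) = \psi_2(s_2)$ lies in $\psi_1(S_1) \cap \psi_2(S_2) = \Gamma_S$, whence $s_i \in \psi_i\vert_{S_i}^{-1}(\Gamma_S)$ for each $i$, so $(s_1,s_2)$ lies in the fibered product on the right. Conversely, any element of $\psi_1\vert_{S_1}^{-1}(\Gamma_S) \times_\psi \psi_2\vert_{S_2}^{-1}(\Gamma_S)$ is by definition a pair $(s_1,s_2)$ with $s_i \in S_i$ and $\psi_1(s_1) = \psi_2(s_2)$, which lands it back in the left-hand side. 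This gives the asserted equality of sets.

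Next, for surjectivity of the canonical projections I would argue for $\pr_1$, the case of $\pr_2$ being symmetric. Given $s_1 \in \psi_1\vert_{S_1}^{-1}(\Gamma_S)$, its value $\psi_1(s_1)$ lies in $\Gamma_S$, and in particular in $\psi_2(S_2)$; hence there is some $s_2 \in S_2$ with $\psi_2(s_2) = \psi_1(s_1)$. Since this common value lies in $\Gamma_S$, we also have $s_2 \in \psi_2\vert_{S_2}^{-1}(\Gamma_S)$, so $(s_1,s_2)$ belongs to the fibered product and projects to $s_1$. This is precisely the step that uses the choice $\Gamma_S = \psi_1(S_1) \cap \psi_2(S_2)$ rather than all of $\Gamma$, and I regard it as the crux of the statement; it is also what resolves the confusion flagged before the lemma, namely that $S_1 \times_\psi S_2$ need not have surjective projections until one restricts to the correct common quotient $\Gamma_S$.

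Finally, for the ``moreover'' clause I would compute the image directly: $\psi_1(\psi_1\vert_{S_1}^{-1}(\Gamma_S))$ consists exactly of those values $\psi_1(s_1)$, with $s_1 \in S_1$, that lie in $\Gamma_S$, i.e. it equals $\psi_1(S_1) \cap \Gamma_S$. Since $\Gamma_S \subseteq \psi_1(S_1)$ by definition, this intersection is simply $\Gamma_S$, and the identity $\psi_2(\psi_2\vert_{S_2}^{-1}(\Gamma_S)) = \Gamma_S$ follows symmetrically. I do not expect any serious obstacle here; the only point requiring care is consistently separating an element's membership in $S_i$ from the membership of its $\psi_i$-image in $\Gamma_S$, which is exactly the bookkeeping carried out in the surjectivity step above.
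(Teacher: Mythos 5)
Your proof is correct, and in fact the paper states Lemma \ref{SL2contractionlemma} without any proof at all, evidently regarding the double inclusion and the surjectivity check as routine; your element-by-element verification is exactly the argument the authors leave implicit. Your identification of the crux --- that restricting the common quotient to $\Gamma_S = \psi_1(S_1) \cap \psi_2(S_2)$, which is contained in each $\psi_i(S_i)$, is what restores surjectivity of the projections --- matches the motivation the paper gives in the paragraph preceding the lemma, and your appeal to Goursat's Lemma is unnecessary (the fibered-product notation is purely set-theoretic here) but harmless.
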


\subsection{Pre-twist groups and how we search for them}

We will now define the notion of a pre-twist group, as a means to aid in the search for $G \in \mc{G}$ which satisfy $m_{\SL_2}(G) < m_{\GL_2}(G)$. 
Our goal to prove Proposition \ref{keypropositionformainthm} may be stated more broadly as follows.
\begin{goal}
To find all (maximal) non-abelian entanglement groups of genus $0$ (or, more generally, of fixed genus $g \geq 0$).
\end{goal}
For any given non-abelian entanglement group $G$, either $m_{\GL_2}(G) = m_{\SL_2}(G)$ or not.  If $m_{\GL_2}(G) = m_{\SL_2}(G)$, then $G$ will be found when we search through all groups with $\GL_2$-level appearing in the list from Corollary \ref{levelincreaseboundcor}.  If $m_{\GL_2}(G) \neq m_{\SL_2}(G)$, then we will view $G$ as being a \emph{twist cover} of some group $\ol{G}$ whose $\GL_2$-level appears in that list. 

\begin{Definition} \label{pretwistgroupdefinition}
A subgroup $\ol{G} \subseteq \GL_2(\hat{\mbz})$ is called a \textbf{pre-twist group} if there exists a non-abelian entanglement group $G \subsetneq \ol{G}$ such that 
\begin{equation} \label{twistcoverconditions}
\begin{split}
m_{\SL_2}(G) &= m_{\GL_2}(\ol{G}) =: \ol{m}, \\
m_{\GL_2}(G) &=: m > \ol{m}, \quad \text{ and} \\
G(\ol{m}) &= \ol{G}(\ol{m}).
\end{split}
\end{equation}
If $\ol{G}$ is a pre-twist group, then a \textbf{twist cover of $\ol{G}$} refers to any non-abelian entanglement group $G \subsetneq \ol{G}$ satisfying \eqref{twistcoverconditions} .
\end{Definition}
If $G$ is a non-abelian entanglement group with $m_{\GL_2}(G) > m_{\SL_2}(G) =: \ol{m}$, then we define
\[
\ol{G} := \pi_{\GL_2}^{-1}(G(\ol{m})) \subseteq \GL_2(\hat{\mbz}).
\]
Clearly $\ol{G}$ is a pre-twist group and $G$ is a twist cover of $\ol{G}$.  Thus, to find all non-abelian entanglement groups whose $\GL_2$-level and $\SL_2$-level are different, it suffices to first find all pre-twist groups $\ol{G}$ and then describe the process of constructing twist covers $G$ of $\ol{G}$.  

Our next lemma will aid in the proof of Proposition \ref{pretwistscenarioproposition} below, which in turn implies a somewhat restrictive necessary condition on pre-twist groups that allows us to deduce Proposition \ref{keypropositionformainthm}.  First we observe two elementary lemmas about twist covers.  The set-up is as follows: $\ol{G}$ will be a pre-twist group of $\GL_2$-level $\ol{m}$ and $G \subsetneq \ol{G}$ will be a twist cover of $\ol{G}$ of $\GL_2$-level $m > \ol{m}$.  Suppose that $m = m_1 m_2$ with $\gcd(m_1,m_2) = 1$, that $\ol{m}_i := \gcd(\ol{m},m_i)$ and that
\[
\begin{split}
G(m) &= G(m_1) \times_{\psi} G(m_2), \\
G(\ol{m}) &= G(\ol{m}_1) \times_{\ol{\psi}} G(\ol{m}_2), 
\end{split}
\]
where the fibering maps $\psi_i : G(m_i) \twoheadrightarrow \Gamma$ surject onto a non-abelian group $\Gamma$ and $\ol{\psi}_i : G(\ol{m}_i) \twoheadrightarrow \ol{\Gamma}$ surject onto the corresponding quotient $\ol{\Gamma}$ of $\Gamma$ as described above in Lemma \ref{onesideprojectionlemma}.  Let $\varpi : \Gamma \twoheadrightarrow \ol{\Gamma}$ denote the canonical surjection. Here and throughout this section, we let $\pi_i : G(m_i) \twoheadrightarrow G(\ol{m}_i)$ denote the reduction modulo $\ol{m}_i$ map restricted to $G(m_i)$ and 
\begin{equation} \label{defofNandK}
N^G(m_i) := \ker \psi_i, \quad\quad N^{\ol{G}}(\ol{m}_i) := \ker \ol{\psi}_i.
\end{equation}
To view things more globally, we define the open subgroups $N^G \subseteq G$ and $N^{\ol{G}} \subseteq \ol{G}$ by
\[
N^G := \pi_{\GL_2}^{-1}\left( N^G(m_1) \times N^G(m_2) \right), \quad\quad N^{\ol{G}} := \pi_{\GL_2}^{-1}\left( N^{\ol{G}}(\ol{m}_1) \times N^{\ol{G}}(\ol{m}_2) \right)
\]
and the maps 
\[
\begin{split}
\psi &: G \longrightarrow \Gamma, \\
\ol{\psi} &: \ol{G} \longrightarrow \ol{\Gamma}
\end{split}
\]
by $\psi(g) := \psi_1(g \mod m_1) = \psi_2(g \mod m_2)$ and $\ol{\psi}(g) := \ol{\psi}_1(g \mod \ol{m}_1) = \ol{\psi}_2(g \mod \ol{m}_2)$.  Then $N^G = \ker \psi$, $N^{\ol{G}} = \ker \ol{\psi}$, and we have a commuting diagram of exact sequences
\begin{equation} \label{Gammadiagram}
\begin{tikzcd}
1 \rar & N^G \rar \dar & G \rar{\psi} \dar & \Gamma \rar \dar{\varpi} & 1 \\
1 \rar & N^{\ol{G}} \rar & \ol{G} \rar{\ol{\psi}} & \ol{\Gamma} \rar & 1
\end{tikzcd}
\end{equation}
in which all unlabeled arrows denote either inclusion maps or trivial surjections.  For any $n \in \mbn$, we may now consider the subgroups $N^G(n) \subseteq G(n) \subseteq \GL_2(\mbz/n\mbz)$ and $N^{\ol{G}}(n) \subseteq \ol{G}(n) \subseteq \GL_2(\mbz/n\mbz)$; we note that $N^G(n) \subseteq N^{\ol{G}}(n)$ and caution the reader that this containment may be proper, especially when $n = \ol{m}$.  Since we are considering the genus of $G$, we are interested in its intersection with $\SL_2(\hat{\mbz})$.
Here and throughout the rest of this section, we will employ the following notation:  for any open subgroup $H \subseteq \GL_2(\hat{\mbz})$, we define
\begin{equation} \label{defofHsubSL2}
H_{\SL_2} := H \cap \SL_2(\hat{\mbz}).
\end{equation}
Note that, for any $n \in \mbn$,
\[
H_{\SL_2}(n) \subseteq H(n) \cap \SL_2(\mbz/n\mbz).
\]
We caution the reader that this containment may be proper when $n$ isn't a multiple of $m_{\GL_2}(H)$. 
The analogue of \eqref{Gammadiagram} obtained after intersecting with $\SL_2(\hat{\mbz})$ is
\begin{equation} \label{SL2Gammadiagram}
\begin{tikzcd}
1 \rar & N^G_{\SL_2} \rar \dar & G_{\SL_2} \rar{\psi\vert_{G_{\SL2}}} \dar & \Gamma_{\SL_2} \rar \dar{\varpi\vert_{\Gamma_{\SL2}}} & 1 \\
1 \rar & N^{\ol{G}}_{\SL_2} \rar & \ol{G}_{\SL_2} \rar{\ol{\psi}\vert_{\ol{G}_{\SL2}}} & \ol{\Gamma}_{\SL_2} \rar & 1,
\end{tikzcd}
\end{equation}
where $\Gamma_{\SL_2} := \psi_1\left( G_{\SL_2}(m_1) \right) \cap \psi_2\left( G_{\SL_2}(m_2) \right)$ and $\ol{\Gamma}_{\SL_2} := \ol{\psi}_1\left( \ol{G}_{\SL_2}(\ol{m}_1) \right) \cap \ol{\psi}_2\left( \ol{G}_{\SL_2}(\ol{m}_2) \right)$ are as in Lemma \ref{SL2contractionlemma} (actually, by the definition \eqref{defofHsubSL2}, we in fact have $\Gamma_{\SL_2} := \psi_1\left( G_{\SL_2}(m_1) \right) = \psi_2\left( G_{\SL_2}(m_2) \right)$ and likewise with $\ol{\Gamma}_{\SL_2}$).  In what follows, our goal is to understand the image of $G_{\SL_2}$ inside $\ol{G}_{\SL_2}$, which is equivalent to understanding the image of $G_{\SL_2}(\ol{m})$ inside $\ol{G}_{\SL_2}(\ol{m})$.


\begin{lemma} \label{NsubSlemma}
Assume the notation outlined above (in particular, assume that $G$ is a non-abelian entanglement group with $\SL_2$-level dividing $\ol{m}$).   We have
\begin{equation} \label{kervarpicontainedinZofA}
\ker \varpi \subseteq Z(\Gamma);
\end{equation}
in particular, for each $i \in \{1, 2 \}$, $\psi_i\left( N^{\ol{G}}(m_i) \right) \subseteq Z(\Gamma)$.  Furthermore, we have
\begin{equation} \label{commutatorcontainmentsforNsubS}
\begin{split}
\left[G(\ol{m}_i),N^{\ol{G}}(\ol{m}_i)\right] &\subseteq N^G_{\SL_2}(\ol{m}_i), \\ 
\left[G(\ol{m}_i),G(\ol{m}_i)\right] &\nsubseteq N^G_{\SL_2}(\ol{m}_i).
\end{split}
\end{equation}
\end{lemma}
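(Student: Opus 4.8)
The plan is to reduce all three assertions to a single key fact: the fibering map $\psi_i$ annihilates the $\SL_2$-congruence kernel
\[
K_i := \ker\left( \SL_2(\mbz/m_i\mbz) \to \SL_2(\mbz/\ol{m}_i\mbz) \right),
\]
i.e. $K_i \subseteq \ker \psi_i = N^G(m_i)$. I would prove this first, using the hypothesis $m_{\SL_2}(G) \mid \ol{m}$ together with Goursat's Lemma. Since $m_{\SL_2}(G) \mid \ol{m}$, the group $G$ contains $\ker(\SL_2(\hat{\mbz}) \to \SL_2(\mbz/\ol{m}\mbz))$, so at level $m$ the Chinese remainder theorem gives $K_1 \times K_2 \subseteq G_{\SL_2}(m) \subseteq G(m) = G(m_1) \times_\psi G(m_2)$. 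For $\kappa \in K_1$ we then have $(\kappa, 1) \in K_1 \times K_2 \subseteq G(m)$, and by the defining condition of the fibered product this forces $\psi_1(\kappa) = \psi_2(1) = 1$; thus $\psi_1(K_1) = 1$, and symmetrically $\psi_2(K_2) = 1$. One should also record here the elementary identity $\ker\pi_i \cap \SL_2(\mbz/m_i\mbz) = K_i$, which again uses the $\SL_2$-level hypothesis to ensure the entire congruence kernel lies inside $G(m_i)$.

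With this in hand, the centrality statement \eqref{kervarpicontainedinZofA} follows quickly. By Lemma \ref{onesideprojectionlemma} we have $\ker \varpi = \psi_1(\ker\pi_1)\psi_2(\ker\pi_2)$, so it suffices to show each $\psi_i(\ker\pi_i) \subseteq Z(\Gamma)$. Fix $k \in \ker\pi_1$ and $g \in G(m_1)$. The commutator $[k,g]$ lies in $\SL_2$ (it has determinant $1$) and reduces to the identity modulo $\ol{m}_1$ (because $k$ does), so $[k,g] \in K_1$ and hence $\psi_1([k,g]) = 1$ by the key fact. As $g$ ranges over $G(m_1)$, $\psi_1(g)$ ranges over all of $\Gamma$, so $\psi_1(k) \in Z(\Gamma)$. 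The ``in particular'' clause is then immediate from $\psi_i(N^{\ol{G}}(m_i)) = \ker\varpi$, which holds because $N^{\ol{G}}(m_i) = \psi_i^{-1}(\ker\varpi)$ by the functional equation \eqref{functionaleqnwithpsiandpsibar}.

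For the first containment in \eqref{commutatorcontainmentsforNsubS}, I would lift: given $g \in G(\ol{m}_i)$ and $n \in N^{\ol{G}}(\ol{m}_i) = \ker\ol{\psi}_i$, choose lifts $\tilde{g}, \tilde{n} \in G(m_i)$ under $\pi_i$. The functional equation gives $\varpi(\psi_i(\tilde{n})) = \ol{\psi}_i(n) = 1$, so $\psi_i(\tilde{n}) \in \ker\varpi \subseteq Z(\Gamma)$ by the centrality just proved; hence $\psi_i([\tilde{g},\tilde{n}]) = [\psi_i(\tilde{g}), \psi_i(\tilde{n})] = 1$, so $[\tilde{g},\tilde{n}] \in \ker\psi_i \cap \SL_2 = N^G_{\SL_2}(m_i)$. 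Reducing modulo $\ol{m}_i$ yields $[g,n] = \pi_i([\tilde{g},\tilde{n}]) \in N^G_{\SL_2}(\ol{m}_i)$, as desired. For the second (non-)containment, since $\Gamma$ is non-abelian there are $a,b \in G(m_i)$ with $\psi_i([a,b]) \neq 1$; if $\pi_i([a,b])$ were to lie in $N^G_{\SL_2}(\ol{m}_i) = \pi_i(\ker\psi_i \cap \SL_2)$, then $[a,b]$ would differ from an element of $\ker\psi_i$ by an element of $\ker\pi_i \cap \SL_2 = K_i$, forcing $\psi_i([a,b]) = 1$ by the key fact, a contradiction. Hence $[G(\ol{m}_i), G(\ol{m}_i)] \not\subseteq N^G_{\SL_2}(\ol{m}_i)$.

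I expect the main obstacle to be organizational rather than conceptual: keeping the various levels ($m$, $m_i$, $\ol{m}$, $\ol{m}_i$), the reduction maps $\pi_i$, and the fibering maps $\psi_i$, $\ol{\psi}_i$, $\varpi$ straight, and in particular making the Goursat interpretation of ``$(\kappa,1) \in G(m) \Rightarrow \psi_1(\kappa)=1$'' fully precise. The one genuine input is the single identity $\psi_i(K_i) = 1$, which packages the $\SL_2$-level hypothesis; once it is available, all three assertions are short formal consequences of it together with the fact that commutators land in $\SL_2$ and that $\Gamma$ is non-abelian.
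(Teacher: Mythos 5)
Your proposal is correct and takes essentially the same route as the paper: the single key input $\psi_i\left(\ker \pi_i \cap \SL_2(\mbz/m_i\mbz)\right) = 1$ (forced by the hypothesis $m_{\SL_2}(G) \mid \ol{m}$ together with the fibered-product structure of $G(m)$) is exactly the paper's opening observation, your commutator argument for $\psi_i(\ker\pi_i) \subseteq Z(\Gamma)$ is the same computation the paper phrases as ``$\psi_i\vert_{\ker\pi_i}$ factors through the determinant,'' and both the first containment (via $\varpi \circ \psi_i = \ol{\psi}_i \circ \pi_i$) and the non-containment (lifting to level $m_i$, using that $N^G_{\SL_2}(m_i)$ is the full $\SL_2$-preimage of $N^G_{\SL_2}(\ol{m}_i)$, and contradicting non-abelianness of $\Gamma$) match the paper's proof step for step.
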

\begin{proof}
Since $m_{\SL_2}(G)$ divides $\ol{m}$, we have that, for each $i \in \{1, 2 \}$, $\psi_{i}( \ker \pi_{i} \cap \SL_2(\mbz/m_{i}\mbz) ) = 1_\Gamma$, and so $\psi_{i} |_{\ker \pi_{i}}$ factors through the determinant map.  It follows that, for each $g \in G(m_{i})$ and $k \in \ker \pi_{i}$, we have
\[
\psi_{i}(g k g^{-1}) = \psi_{i}(k),
\]
and by surjectivity of $\psi_{i}$ we thus see that $\psi_i(\ker \pi_i)$ is contained in the center of $\Gamma$.  Since $\ker \varpi = \psi_1(\ker \pi_1) \psi_2(\ker \pi_2)$, this establishes \eqref{kervarpicontainedinZofA}, and it follows from $\varpi \circ \psi_i = \ol{\psi}_i \circ \pi_i$ that $\psi_i\left( N^{\ol{G}}(m_i) \right) \subseteq Z(\Gamma)$.  The first containment in \eqref{commutatorcontainmentsforNsubS} follows from this by further considering the isomorphism $G(m_i) / N^G(m_i) \simeq \Gamma$ and then projecting modulo $\ol{m}_i$.  To see why $[G(\ol{m}_i),G(\ol{m}_i)] \nsubseteq N^G_{\SL_2}(\ol{m}_i)$, suppose on the contrary that $[G(\ol{m}_i),G(\ol{m}_i)] \subseteq N^G_{\SL_2}(\ol{m}_i)$.  Since $N^G_{\SL_2}(m_i) = \pi_{\SL_2}^{-1}(N^G_{\SL_2}(\ol{m}_i))$, we then see that
\[
[G(m_i),G(m_i)] \subseteq \pi_{\SL_2}^{-1}([G(\ol{m}_i),G(\ol{m}_i)]) \subseteq N^G_{\SL_2}(m_i) \subseteq N^G(m_i),
\]
contradicting the fact that $\Gamma \simeq G(m_i) / N^G(m_i)$ is non-abelian.  This establishes that $[G(\ol{m}_i),G(\ol{m}_i)] \nsubseteq N^G_{\SL_2}(\ol{m}_i)$, finishing the proof.
\end{proof}
\begin{cor} \label{nontrivialabelianquotientcor}
Let $\ol{G}$ be a pre-twist group, let $G \in \mc{G}_{\nonab}$ be a twist cover of $\ol{G}$ and let $m$, $\ol{m}$ be as in \eqref{twistcoverconditions}.  Suppose that
$m = m_1 m_2$ is a permissible factorization for which $G(m) \simeq G(m_1) \times_{\psi} G(m_2)$ has a non-abelian common quotient $\Gamma$.  Then, defining $\ol{m}_i := \gcd(\ol{m},m_i)$ for $i \in \{1, 2 \}$, the common quotient $\ol{\Gamma}$ associated to $\ol{G}(\ol{m}) \simeq \ol{G}(\ol{m}_1) \times_{\ol{\psi}} \ol{G}(\ol{m}_2)$ satisfies $\ol{\Gamma} \neq \{ 1 \}$.  Consequently we have
\begin{equation} \label{thinneddownlevels}
G \in \mc{G}_{\nonab}(0) \; \Rightarrow \; m_{\SL_2}(G) \in \left\{ \begin{matrix} 6,10,12,14,15,18,20,21,22,24,26,28,30, \\ 36,40,42,48,50,52,54,56,60,64,72,96 \end{matrix} \right\}.
\end{equation}
If we further assume that $G \in \mc{G}_{\nonab}^{\max}$, then $\ol{\Gamma}$ is abelian.
\end{cor}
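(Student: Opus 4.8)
The plan is to prove the final assertion by contradiction, leveraging the maximality of $G$ together with the fibered-product characterization of non-abelian entanglement groups from Remark~\ref{purelygrouptheoreticremark}. So suppose, toward a contradiction, that $\ol{\Gamma}$ is non-abelian. The idea is to show that the pre-twist group $\ol{G}$ is then itself a non-abelian entanglement group which strictly dominates $G$ with respect to $\dot\subseteq$, violating $G \in \mc{G}_{\nonab}^{\max}$.

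First I would check that $\ol{m} = \ol{m}_1 \ol{m}_2$ is a \emph{permissible} factorization. Since $\ol{m} = m_{\SL_2}(G)$ divides $m_{\GL_2}(G) = m$, and $m = m_1 m_2$ with $\gcd(m_1,m_2) = 1$, the definition $\ol{m}_i := \gcd(\ol{m},m_i)$ yields coprime factors with $\ol{m}_1 \ol{m}_2 = \ol{m}$. It remains to verify that each $\ol{m}_i > 1$; here I would invoke that $\ol{\Gamma} \neq \{1\}$, which is guaranteed (it is the first assertion of this corollary, and in any case follows from $\ol{\Gamma}$ being non-abelian). Indeed, if some $\ol{m}_i = 1$, then $\ol{G}(\ol{m}_i)$ is trivial and the surjection $\ol{\psi}_i : \ol{G}(\ol{m}_i) \twoheadrightarrow \ol{\Gamma}$ would force $\ol{\Gamma} = \{1\}$, a contradiction. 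With both factors exceeding $1$, the factorization is permissible.

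Next I would apply Remark~\ref{purelygrouptheoreticremark} to the open group $\ol{G} = \pi_{\GL_2}^{-1}(G(\ol{m}))$: at the level $\ol{m} = m_{\GL_2}(\ol{G})$ we have the permissible factorization $\ol{m} = \ol{m}_1 \ol{m}_2$, and by the fibered-product structure recorded above we have $\ol{G}(\ol{m}) \simeq \ol{G}(\ol{m}_1) \times_{\ol{\psi}} \ol{G}(\ol{m}_2)$ with the non-abelian common quotient $\ol{\Gamma}$; the remark therefore yields $\ol{G} \in \mc{G}_{\nonab}$. Finally, because $G$ is a twist cover of $\ol{G}$ we have the genuine proper containment $G \subsetneq \ol{G}$, and because $m_{\GL_2}(G) = m > \ol{m} = m_{\GL_2}(\ol{G})$ the two groups cannot be $\GL_2(\hat{\mbz})$-conjugate; hence $G \dot\subsetneq \ol{G}$ with $\ol{G} \in \mc{G}_{\nonab}$, contradicting the maximality of $G$. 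This forces $\ol{\Gamma}$ to be abelian, as claimed.

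The argument is structurally short, essentially a matter of tracking levels and quoting the fibered-product reformulation, so I do not anticipate a serious obstacle. The one step meriting care is the verification that the coprime factorization $\ol{m} = \ol{m}_1 \ol{m}_2$ is actually permissible, i.e. that neither factor collapses to $1$; it is precisely the non-triviality of $\ol{\Gamma}$, established in the earlier part of the corollary, that rules this out.
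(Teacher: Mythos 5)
There is a genuine gap: you have proved only the \emph{final} assertion of the corollary, while its principal content --- the unconditional statement $\ol{\Gamma} \neq \{1\}$, and the resulting level list \eqref{thinneddownlevels} --- is never established. Your parenthetical ``it is the first assertion of this corollary'' cites the very statement under proof, which in a blind attempt is circular; and nontriviality of $\ol{\Gamma}$ is not automatic. A priori, the fibered product at the lower level could degenerate: one could imagine $\ol{G}(\ol{m}) = \ol{G}(\ol{m}_1) \times \ol{G}(\ol{m}_2)$ a full direct product (so $\ol{\Gamma} = \{1\}$) even though $G(m)$ fibers over a non-abelian $\Gamma$. The paper rules this out using the centrality result \eqref{kervarpicontainedinZofA} of Lemma \ref{NsubSlemma}: since $m_{\SL_2}(G)$ divides $\ol{m}$, the restriction of $\psi_i$ to $\ker \pi_i$ kills $\ker \pi_i \cap \SL_2(\mbz/m_i\mbz)$ and hence factors through the determinant, so $\psi_i(\ker \pi_i) \subseteq Z(\Gamma)$; by Lemma \ref{onesideprojectionlemma} one has $\ol{\Gamma} \simeq \Gamma / \ker \varpi$ with $\ker \varpi = \psi_1(\ker\pi_1)\psi_2(\ker\pi_2) \subseteq Z(\Gamma) \subsetneq \Gamma$ (the last inclusion strict precisely because $\Gamma$ is non-abelian), whence $\ol{\Gamma} \neq \{1\}$. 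It is this unconditional nontriviality that makes $\ol{m} = \ol{m}_1\ol{m}_2$ permissible and thins the list of Corollary \ref{levelincreaseboundcor} to the levels in \eqref{thinneddownlevels} admitting permissible factorizations; your proposal contains no route to either conclusion.

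That said, your argument for the final assertion is correct and coincides with the paper's: assuming $\ol{\Gamma}$ non-abelian, Remark \ref{purelygrouptheoreticremark} applied to $\ol{G}(\ol{m}) \simeq \ol{G}(\ol{m}_1) \times_{\ol{\psi}} \ol{G}(\ol{m}_2)$ gives $\ol{G} \in \mc{G}_{\nonab}$, and $G \subsetneq \ol{G}$ contradicts $G \in \mc{G}_{\nonab}^{\max}$. Within this contradiction there is no circularity, since non-abelian trivially implies nontrivial, so each $\ol{m}_i > 1$; and your observation that $m_{\GL_2}(G) > m_{\GL_2}(\ol{G})$ forces $G \not\doteq \ol{G}$, so that genuinely $G \, \dot\subsetneq \, \ol{G}$ as required by the definition of $\mc{G}_{\nonab}^{\max}$, is a point of care the paper leaves implicit. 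But as a proof of the corollary as stated, the attempt is incomplete: the missing idea is precisely the centrality $\ker \varpi \subseteq Z(\Gamma)$, without which the first two assertions have no support.
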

\begin{proof}
By Lemma \ref{onesideprojectionlemma} and \eqref{kervarpicontainedinZofA}, we have that
\[
\ker \varpi \subseteq Z(\Gamma) \neq \Gamma,
\]
since $\Gamma$ is non-abelian.  This establishes that $\ol{\Gamma} \neq \{ 1 \}$.  It follows that the factorization $\ol{m} = \ol{m}_1 \ol{m}_2$ must be permissible, and therefore $m_{\SL_2}(G) = \ol{m}$ must belong to the subset of those levels listed in Corollary \ref{levelincreaseboundcor} which admit permissible factorizations, leading to \eqref{thinneddownlevels}.
Finally, if $\ol{\Gamma}$ were non-abelian, then $\ol{G} \in \mc{G}_{\nonab}$ and $G \subsetneq \ol{G}$, contradicting the hypothesis that $G \in \mc{G}_{\nonab}^{\max}$.
\end{proof}
\begin{proposition} \label{pretwistscenarioproposition}
Let $\ol{G}$ be a pre-twist group, let $G$ be a twist cover of $\ol{G}$, and assume the notation set above.  Then there exists a pair of surjective group homomorphisms $\tilde{\psi}_i : G(\ol{m}_i) \twoheadrightarrow \tilde{\Gamma}_i$ onto non-abelian groups $\tilde{\Gamma}_i$ with the following properties:
\begin{enumerate}
\item There exist surjective group homomorphisms $\tilde{\varpi}_i :\tilde{\Gamma}_i \twoheadrightarrow \ol{\Gamma}$ with $\ker \tilde{\varpi}_i \subseteq Z(\tilde{\Gamma}_i)$ and
\begin{equation} \label{psibarequalsvarpicircpsitilde}
\ol{\psi}_i = \tilde{\varpi}_i \circ \tilde{\psi}_i.
\end{equation}
\item Defining $\Gamma_{\SL_2}$ to be the common value $\psi_1\left( G_{\SL_2}(m_1) \right) = \psi_2\left( G_{\SL_2}(m_2) \right)$ and $\tilde{\Gamma}_{i,\SL_2} := \tilde{\psi}_i\left(G_{\SL_2}(\ol{m}_i)\right)$, there are isomorphisms $\theta_i : \Gamma_{\SL_2} \rightarrow \tilde{\Gamma}_{i,\SL_2}$ satisfying $\tilde{\varpi}_i \vert_{\tilde{\Gamma}_{i,\SL_2}} \circ \theta_i = \varpi \vert_{\Gamma_{\SL_2}}$ and $\theta_i \circ \psi_i \vert_{G_{\SL_2}(m_i)} = \tilde{\psi}_i \vert_{G_{\SL_2}(\ol{m}_i)} \circ \pi_i \vert_{G_{\SL_2}(m_i)}$.
\end{enumerate}
Finally, under the isomorphism $G(m) \simeq G(m_1) \times_{\psi} G(m_2)$ we have that
\begin{equation} \label{criterioncheckableatlowerlevel}
G_{\SL_2}(m) \simeq \pi_{\SL_2}^{-1}\left( \tilde{\psi}_1|_{G_{\SL_2}(\ol{m}_1)}^{-1}(\theta_1(\Gamma_{\SL_2})) \times_{\theta^{-1} \circ \tilde{\psi}} \tilde{\psi}_2|_{G_{\SL_2}(\ol{m}_2)}^{-1}(\theta_2(\Gamma_{\SL_2})) \right).
\end{equation}
\end{proposition}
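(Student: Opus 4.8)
The plan is to build $\tilde\Gamma_i$ as a central quotient of $\Gamma$ interpolating between $\Gamma$ and $\ol\Gamma$, namely the quotient that kills only the $i$-th ``level-raising'' part of $\ker\varpi$. Concretely, set $\tilde\Gamma_i := \Gamma/\psi_i(\ker\pi_i)$ with quotient map $q_i:\Gamma\twoheadrightarrow\tilde\Gamma_i$. By \eqref{kervarpicontainedinZofA} we have $\psi_i(\ker\pi_i)\subseteq Z(\Gamma)$, so this quotient is legitimate, and since $q_i\circ\psi_i$ kills $\ker\pi_i$ by construction it descends to a surjection $\tilde\psi_i:G(\ol m_i)=G(m_i)/\ker\pi_i\twoheadrightarrow\tilde\Gamma_i$. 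Because $\ker\varpi=\psi_1(\ker\pi_1)\psi_2(\ker\pi_2)$, there is a residual quotient $\tilde\varpi_i:\tilde\Gamma_i\twoheadrightarrow\Gamma/\ker\varpi=\ol\Gamma$ whose kernel is the image of $\psi_{3-i}(\ker\pi_{3-i})$, again central by \eqref{kervarpicontainedinZofA}; this gives $\ker\tilde\varpi_i\subseteq Z(\tilde\Gamma_i)$. The relation $\ol\psi_i=\tilde\varpi_i\circ\tilde\psi_i$ is then just the functional equation \eqref{functionaleqnwithpsiandpsibar} read through $q_i$, since the composite $\Gamma\xrightarrow{q_i}\tilde\Gamma_i\xrightarrow{\tilde\varpi_i}\ol\Gamma$ is $\varpi$. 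This settles part (1) apart from the non-abelianness of $\tilde\Gamma_i$, which I treat together with the $\SL_2$-statement.

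The hard part will be part (2): producing $\theta_i$ as an \emph{isomorphism} $\Gamma_{\SL_2}\xrightarrow{\sim}\tilde\Gamma_{i,\SL_2}$. The two required compatibilities force $\theta_i$ to be nothing but $q_i$ restricted to $\Gamma_{\SL_2}$ (indeed $\tilde\psi_i\circ\pi_i=q_i\circ\psi_i$ and $\tilde\varpi_i\circ q_i=\varpi$ hold identically, and $\psi_i|_{G_{\SL_2}(m_i)}$ surjects onto $\Gamma_{\SL_2}$), so everything reduces to showing that $q_i|_{\Gamma_{\SL_2}}$ is injective, i.e. that
\[
\psi_i(\ker\pi_i)\cap\Gamma_{\SL_2}=\{1\}.
\]
Since $\psi_i(\ker\pi_i)\subseteq\ker\varpi$, it suffices to prove the cleaner assertion that $\varpi$ is injective on $\Gamma_{\SL_2}$. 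To attack this I would exploit determinants: $G_{\SL_2}(m_i)=\ker(\det|_{G(m_i)})$ is normal in $G(m_i)$, so $\Gamma_{\SL_2}=\psi_i(G_{\SL_2}(m_i))$ is normal in $\Gamma$ with $\Gamma/\Gamma_{\SL_2}$ abelian and canonically identified with a quotient of $\det(G(m_i))$; under this identification an element $\psi_i(k)$ with $k\in\ker\pi_i$ maps to the class of $\det(k)$, which lies in $\ker\!\big((\mbz/m_i\mbz)^\times\to(\mbz/\ol m_i\mbz)^\times\big)$. The hypothesis $m_{\SL_2}(G)\mid\ol m$ enters through Lemma \ref{NsubSlemma}, which gives $\psi_i(\ker\pi_i\cap\SL_2(\mbz/m_i\mbz))=\{1\}$, so that $\psi_i|_{\ker\pi_i}$ genuinely factors through $\det$. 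The main obstacle is then a lifting problem that I expect to be the technical heart of the argument: to conclude one must replace $k$ by a determinant-one representative in $N^G(m_i)\cap\ker\pi_i$, which requires knowing that the reduction of $N^G_{\SL_2}(m_i)$ modulo $\ol m_i$ fills out $N^{\ol G}(\ol m_i)\cap\SL_2(\mbz/\ol m_i\mbz)$ --- precisely the ``$H_{\SL_2}(n)$ versus $H(n)\cap\SL_2$'' discrepancy flagged after \eqref{defofHsubSL2}. I would control this using the congruence-subgroup (pro-$p$) structure of $\ker\pi_i$, exactly as in the proof of Lemma \ref{verticallemma}. Granting injectivity, non-abelianness of $\tilde\Gamma_i$ is immediate: $[\Gamma,\Gamma]\subseteq\Gamma_{\SL_2}$ is non-trivial because $\Gamma$ is non-abelian, and it survives in $\tilde\Gamma_i$ precisely because $q_i$ is injective on $\Gamma_{\SL_2}$; alternatively one reads it off from the second containment in \eqref{commutatorcontainmentsforNsubS}.

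Once $\theta_i=q_i|_{\Gamma_{\SL_2}}$ is known to be an isomorphism onto $\tilde\Gamma_{i,\SL_2}=q_i(\Gamma_{\SL_2})$, the two compatibility relations $\tilde\varpi_i|_{\tilde\Gamma_{i,\SL_2}}\circ\theta_i=\varpi|_{\Gamma_{\SL_2}}$ and $\theta_i\circ\psi_i|_{G_{\SL_2}(m_i)}=\tilde\psi_i|_{G_{\SL_2}(\ol m_i)}\circ\pi_i|_{G_{\SL_2}(m_i)}$ follow at once from $\tilde\varpi_i\circ q_i=\varpi$ and $\tilde\psi_i\circ\pi_i=q_i\circ\psi_i$. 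For the final formula \eqref{criterioncheckableatlowerlevel} I would first apply Lemma \ref{SL2contractionlemma} to the identification $G(m)\simeq G(m_1)\times_\psi G(m_2)$ intersected with $\SL_2\times\SL_2$, obtaining $G_{\SL_2}(m)=G_{\SL_2}(m_1)\times_\psi G_{\SL_2}(m_2)$ with fibering maps $\psi_i|_{G_{\SL_2}(m_i)}$ surjecting onto $\Gamma_{\SL_2}$. Then I would rewrite each fibering map via part (2) as $\psi_i|_{G_{\SL_2}(m_i)}=\theta_i^{-1}\circ\tilde\psi_i|_{G_{\SL_2}(\ol m_i)}\circ\pi_i$, and use that $m_{\SL_2}(G)\mid\ol m$ gives $G_{\SL_2}(m_i)=\pi_i^{-1}(G_{\SL_2}(\ol m_i))$; a pair $(s_1,s_2)$ then lies in the fibered product iff its reduction $(\pi_1 s_1,\pi_2 s_2)$ satisfies $\theta_1^{-1}\tilde\psi_1(\pi_1 s_1)=\theta_2^{-1}\tilde\psi_2(\pi_2 s_2)$, which is exactly the defining condition of the level-$\ol m$ fibered product in \eqref{criterioncheckableatlowerlevel}. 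Finally, since $\theta_i(\Gamma_{\SL_2})=\tilde\Gamma_{i,\SL_2}=\tilde\psi_i(G_{\SL_2}(\ol m_i))$, the preimages $\tilde\psi_i|_{G_{\SL_2}(\ol m_i)}^{-1}(\theta_i(\Gamma_{\SL_2}))$ equal all of $G_{\SL_2}(\ol m_i)$, so applying $\pi_{\SL_2}^{-1}$ recovers \eqref{criterioncheckableatlowerlevel} verbatim.
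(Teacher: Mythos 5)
Your formal steps in part (1) and in the final derivation of \eqref{criterioncheckableatlowerlevel} are fine, but the core construction $\tilde{\Gamma}_i := \Gamma/\psi_i(\ker \pi_i)$ is the wrong group, and the injectivity you defer as a ``lifting problem'' is not a technical gap to be closed in the style of Lemma \ref{verticallemma} --- it is false in exactly the regime the proposition is designed to serve. First note that $[\Gamma,\Gamma] \subseteq \Gamma_{\SL_2}$: every commutator in $G(m_i)$ is the reduction of a commutator in $G$, which lies in $G \cap \SL_2(\hat{\mbz})$, so $[\Gamma,\Gamma] = \psi_i\left([G(m_i),G(m_i)]\right) \subseteq \psi_i\left(G_{\SL_2}(m_i)\right) = \Gamma_{\SL_2}$. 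Now suppose the level is raised on one side only, say $m_1 = \ol{m}_1$, so that $\ker \pi_1 = 1$ and hence $\ker \varpi = \psi_2(\ker \pi_2)$; and suppose $\ol{\Gamma}$ is abelian --- which by Corollary \ref{nontrivialabelianquotientcor} is \emph{forced} whenever $G \in \mc{G}_{\nonab}^{\max}$, i.e.\ precisely in the case the proposition is used for in the search. Then $1 \neq [\Gamma,\Gamma] \subseteq \ker\varpi = \psi_2(\ker\pi_2)$, so $\psi_2(\ker\pi_2) \cap \Gamma_{\SL_2} \supseteq [\Gamma,\Gamma]$ is nontrivial: your $\theta_2 = q_2\vert_{\Gamma_{\SL_2}}$ is not injective, your ``cleaner assertion'' that $\varpi$ is injective on $\Gamma_{\SL_2}$ fails, and worse, your $\tilde{\Gamma}_2 = \Gamma/\ker\varpi = \ol{\Gamma}$ is abelian, so it cannot satisfy the non-abelianness the proposition requires of $\tilde{\Gamma}_i$. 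No analysis of the pro-$p$ structure of $\ker\pi_i$ can repair this: the quotient you chose has already killed the commutator data.

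The paper's construction avoids this by \emph{not} taking a quotient of $\Gamma$ at all: it sets $\tilde{\Gamma}_i := G(\ol{m}_i)/N^G_{\SL_2}(\ol{m}_i)$, dividing only by the reduction mod $\ol{m}_i$ of the determinant-one part $N^G_{\SL_2} = N^G \cap \SL_2(\hat{\mbz})$ of the kernel; your group is the further quotient of this one by $\pi_i\left(N^G(m_i)\right)/N^G_{\SL_2}(\ol{m}_i)$, and that extra collapse is exactly what loses the non-abelian structure. With the paper's choice, non-abelianness of $\tilde{\Gamma}_i$ is precisely the second containment in \eqref{commutatorcontainmentsforNsubS}, centrality of $\ker\tilde{\varpi}_i$ follows from the first, and --- the key point your route misses --- $\theta_i$ is an isomorphism with no lifting problem at all: since $m_{\SL_2}(G) = \ol{m}$, both $G_{\SL_2}(m_i)$ and $N^G_{\SL_2}(m_i)$ are full $\pi_{\SL_2}$-preimages of their reductions mod $\ol{m}_i$ (for the latter, an element of $\SL_2(\mbz/m_i\mbz)$ congruent to $1$ mod $\ol{m}_i$ lifts to an element of $\ker\left(\SL_2(\hat{\mbz}) \rightarrow \SL_2(\mbz/\ol{m}\mbz)\right) \subseteq G_{\SL_2}$ chosen trivial mod $m_{3-i}$, hence lying in $N^G$), so reduction mod $\ol{m}_i$ induces
\[
\Gamma_{\SL_2} \simeq G_{\SL_2}(m_i)/N^G_{\SL_2}(m_i) \xrightarrow{\;\sim\;} G_{\SL_2}(\ol{m}_i)/N^G_{\SL_2}(\ol{m}_i) = \tilde{\Gamma}_{i,\SL_2}.
\]
Your last paragraph, deducing \eqref{criterioncheckableatlowerlevel} from Lemma \ref{SL2contractionlemma}, part (2), and the full-preimage property, matches the paper's argument and goes through verbatim once $\tilde{\Gamma}_i$, $\tilde{\psi}_i$, $\theta_i$ are replaced by these.
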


\begin{proof}
We define
\[
\tilde{\Gamma}_i := G(\ol{m}_i) / N^G_{\SL_2}(\ol{m}_i)
\] 
and let $\tilde{\psi}_i : G(\ol{m}_i) \twoheadrightarrow \tilde{\Gamma}_i$ be the canonical surjection.  By Lemma \ref{NsubSlemma}, $\tilde{\Gamma}_i$ is non-abelian.  By the definition \eqref{defofNandK}, we see that $N^G_{\SL_2}(\ol{m}_i) \subseteq N^{\ol{G}}(\ol{m}_i)$, so there is a natural map
\[
\tilde{\varpi}_i : \tilde{\Gamma}_i := G(\ol{m}_i)/N^G_{\SL_2}(\ol{m}_i) \longrightarrow G(\ol{m}_i)/N^{\ol{G}}(\ol{m}_i) \simeq \ol{\Gamma}.
\]
We note the commuting diagram
\begin{equation*} 
\begin{tikzcd}
G(m_i) \rar \arrow[black, bend left]{rrr}{\psi_i} \dar{\pi_i}  & G(m_i)/N^G_{\SL_2}(m_i) \rar  & G(m_i)/N^G(m_i) \dar \rar{\simeq} & \Gamma \dar{\varpi} \\
G(\ol{m}_i) \rar{\tilde{\psi}_i} & G(\ol{m}_i)/N^G_{\SL_2}(\ol{m}_i) \rar \arrow[black, bend right]{rr}{\tilde{\varpi}_i} & G(\ol{m}_i)/N^{\ol{G}}(\ol{m}_i) \rar{\simeq} & \ol{\Gamma},
\end{tikzcd}
\end{equation*}
which implies that $\varpi \circ \psi_i = \tilde{\varpi}_i \circ \tilde{\psi}_i \circ \pi_i$.  Using this together with \eqref{functionaleqnwithpsiandpsibar}, the functional equation \eqref{psibarequalsvarpicircpsitilde} is then established.  Futhermore, it follows from the first containment in \eqref{commutatorcontainmentsforNsubS} that $\ker \tilde{\varpi}_i \subseteq Z(\tilde{\Gamma}_i)$.

We now construct the maps $\theta_i$.  We have $\Gamma_{\SL_2} := \psi_i(G_{\SL_2}(m_i)) \simeq G_{\SL_2}(m_i)/N^G_{\SL_2}(m_i)$, and since $m_{\SL_2}(G) = \ol{m}$ and by definition of $N^G_{\SL_2}$, we have
\[
\begin{split}
G_{\SL_2}(m_i) \, = & \; \pi_{\SL_2}^{-1}\left( G_{\SL_2}(\ol{m}_i) \right) \\
N^G_{\SL_2}(m_i) \, =& \; \pi_{\SL_2}^{-1}\left( N^G_{\SL_2}(\ol{m}_i) \right),
\end{split}
\] 
and thus we see that the reduction modulo $\ol{m}_i$ map induces an isomorphism that defines  $\theta_i$:
\[
\theta_i : \Gamma_{\SL_2} \simeq \frac{G_{\SL_2}(m_i)}{N^G_{\SL_2}(m_i)} \rightarrow \frac{G_{\SL_2}(\ol{m}_i)}{N^G_{\SL_2}( \ol{m}_i )} \simeq \tilde{\psi}_i\left( G_{\SL_2}(\ol{m}_i) \right) = \tilde{\Gamma}_{i,\SL_2}.
\]
Furthermore, the commuting diagram
\begin{equation} \label{smallerfancydiagram}
\begin{tikzcd}
G_{\SL_2}(m_i) \rar \arrow[black, bend left]{rr}{\psi_i} \dar{\pi_i}  & G_{\SL_2}(m_i)/N^G_{\SL_2}(m_i) \dar \rar{\simeq} & \Gamma_{\SL_2} \dar{\theta_i} \arrow{rd}{\varpi} & \\
G_{\SL_2}(\ol{m}_i) \rar \arrow[black, bend right]{rr}{\tilde{\psi}_i} & G_{\SL_2}(\ol{m}_i)/N^G_{\SL_2}(\ol{m}_i) \rar{\simeq} & \tilde{\Gamma}_{i,\SL_2} \rar{\tilde{\varpi}_i}  & \ol{\Gamma}_{\SL_2} \simeq \ol{G}_{\SL_2}(\ol{m}_i)/N^{\ol{G}}_{\SL_2}(\ol{m}_i),
\end{tikzcd}
\end{equation}
illustrates that $\tilde{\varpi}_i \vert_{\tilde{\Gamma}_{i,\SL_2}} \circ \theta_i = \varpi \vert_{\Gamma_{\SL_2}}$ and
\begin{equation} \label{functionidentity}
\theta_i \circ \psi_i \vert_{G_{\SL_2}(m_i)} = \tilde{\psi}_i \vert_{G_{\SL_2}(\ol{m}_i)} \circ \pi_i \vert_{G_{\SL_2}(m_i)}. 
\end{equation}
Finally, Lemma \ref{SL2contractionlemma}, together with \eqref{functionidentity} and the fact that $G_{\SL_2}(m) = \pi_{\SL_2}^{-1}\left( G_{\SL_2}(\ol{m}) \right)$, imply that
\[
\begin{split}
G_{\SL_2}(m) &\simeq \psi_1 \vert_{G_{\SL_2}(m_1)}^{-1} \left( \Gamma_{\SL_2} \right) \times_\psi \psi_2 \vert_{G_{\SL_2}(m_2)}^{-1} \left( \Gamma_{\SL_2} \right) \\
&= \pi_1 \vert_{G_{\SL_2}(m_1)}^{-1} \tilde{\psi}_1 \vert_{G_{\SL_2}(\ol{m}_1)}^{-1} \theta_1 \left( \Gamma_{\SL_2} \right) \times_{\theta^{-1} \tilde{\psi} \pi} \pi_2 \vert_{G_{\SL_2}(m_2)}^{-1} \tilde{\psi}_2 \vert_{G_{\SL_2}(\ol{m}_2)}^{-1} \theta_2 \left( \Gamma_{\SL_2} \right) \\
&= \pi_{\SL_2}^{-1}\left( \tilde{\psi}_1\vert_{G_{\SL_2}(\ol{m}_1)}^{-1}(\theta_1(\Gamma_{\SL_2})) \times_{\theta^{-1} \tilde{\psi}} \tilde{\psi}_2
\vert_{G_{\SL_2}(\ol{m}_2)}^{-1}(\theta_2(\Gamma_{\SL_2})) \right).
\end{split}
\]
\end{proof}

The main point of Proposition \ref{pretwistscenarioproposition} is that the right-hand side of \eqref{criterioncheckableatlowerlevel} involves information just from level $\ol{m}$, excepting only the subgroups $\tilde{H}_i := G_{\SL_2}(\ol{m}_i) \subseteq \ol{G}_{\SL_2}(\ol{m}_i)$, which 
satisfy the condition
\begin{equation} \label{keyconditionontheHs}
\tilde{\psi}_1\left( \tilde{H}_1 \right) \simeq \tilde{\psi}_2\left( \tilde{H}_2 \right).
\end{equation}
In our search for pre-twist groups, we can thus take $\tilde{H}_i \subseteq \ol{G}_{\SL_2}(\ol{m}_i)$ to be arbitrary subgroups that happen to satisfy \eqref{keyconditionontheHs}. Thus, we have derived necessary conditions for $\ol{G}$ to be a pre-twist group, which can be checked from $\ol{G}$ alone (at level $\ol{m}$). We emphasize this point in the following corollary, which is then used to prove Proposition \ref{keypropositionformainthm}.

\begin{cor} \label{keycorollarytobeused}
Let $\ol{G}$ be a pre-twist group, let $G$ be a twist cover of $\ol{G}$ and let 
\[
m := m_{\GL_2}(G), \quad \ol{m} := m_{\GL_2}(\ol{G}).  
\]
Suppose that
$m = m_1 m_2$ is a permissible factorization for which $G(m) \simeq G(m_1) \times_{\psi} G(m_2)$ has a non-abelian common quotient $\Gamma$.  For $i\in\{1,2\}$, define $\ol{m}_i := \gcd(\ol{m},m_i)$, write
\[
\ol{G}(\ol{m}) \simeq \ol{G}(\ol{m}_1) \times_{\ol{\psi}} \ol{G}(\ol{m}_2),
\]
and set $N^{\ol{G}}(\ol{m}_i)=\ker\ol{\psi}_i$.
Then there exist subgroups $\tilde{N}_i\subseteq N^{\ol{G}}(\ol{m}_i)\cap\SL_2(\Z/\ol{m}_i\Z)$ (with $\tilde{N}_i \neq N^{\ol{G}}(\ol{m}_i)\cap\SL_2(\Z/\ol{m}_i\Z)$ in case $G \in \mc{G}_{\nonab}^{\max}$), with each $\tilde{N}_i$ normal in $\ol{G}(\ol{m}_i)$, 
satisfying
\begin{equation} \label{keypropertyofNsubitilde}
[\ol{G}(\ol{m}_i),N^{\ol{G}}(\ol{m}_i)] \subseteq \tilde{N}_i \nsupseteq [\ol{G}(\ol{m}_i),\ol{G}(\ol{m}_i)].
\end{equation}
Furthermore, setting $\tilde{\psi}_i : \ol{G}(\ol{m}_i) \longrightarrow \ol{G}(\ol{m}_i)/\tilde{N}_i$, there exist subgroups $\tilde{H}_i \subseteq \ol{G}_{\SL_2}(\ol{m}_i)$ and isomorphisms $\theta_i : B \rightarrow \tilde{\psi}_i\left( \tilde{H}_i \right)$ (for some group $B$) satisfying \begin{equation} \label{respectsthefibering}
\forall b \in B, \quad \tilde{\varpi}_1 \left( \theta_1 (b) \right) = \tilde{\varpi}_2 \left( \theta_2 (b) \right)
\end{equation}
and such that, if
\[
S := \tilde{\psi}_1 |_{\tilde{H}_1}^{-1} \left( \theta_1(B) \right) \times_{\theta^{-1} \circ \tilde{\psi}} \tilde{\psi}_2 |_{\tilde{H}_2}^{-1} \left( \theta_2(B) \right),
\]
then the modular curve $X_{\tilde{G}}$ is isomorphic over $\ol{\mbq}$ to the modular curve $X_{\tilde{S}}$.  In particular, there are embeddings $\theta_i : B \hookrightarrow \tilde{\psi}_i\left( \ol{G}_{\SL_2}(\ol{m}_i) \right)$ satisfying \eqref{respectsthefibering} and such that, if
\begin{equation} \label{defofSprime}
S' := \tilde{\psi}_1 |_{\ol{G}_{\SL_2}(\ol{m}_1)}^{-1} \left( \theta_1(B) \right) \times_{\theta^{-1} \circ \tilde{\psi}} \tilde{\psi}_2 |_{\ol{G}_{\SL_2}(\ol{m}_1)}^{-1} \left( \theta_2(B) \right),
\end{equation}
then $X_{\tilde{G}}$ is a geometric cover of $X_{\tilde{S}'}$, which in turn is a geometric cover of $X_{\tilde{\ol{G}}}$.
\end{cor}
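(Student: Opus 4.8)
The plan is to recognize this corollary as a repackaging of Proposition~\ref{pretwistscenarioproposition} (together with Lemma~\ref{NsubSlemma} and Corollary~\ref{nontrivialabelianquotientcor}) in which every piece of data is re-expressed at level $\ol{m}$ by means of the defining twist-cover identity $G(\ol{m}) = \ol{G}(\ol{m})$ from \eqref{twistcoverconditions}. Reducing this identity modulo $\ol{m}_i$ gives $G(\ol{m}_i) = \ol{G}(\ol{m}_i)$, so the maps $\tilde{\psi}_i$, the non-abelian groups $\tilde{\Gamma}_i$, and the isomorphisms $\theta_i$ produced by Proposition~\ref{pretwistscenarioproposition} already live on $\ol{G}(\ol{m}_i)$. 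First I would simply \emph{set} $\tilde{N}_i := N^G_{\SL_2}(\ol{m}_i) = \ker\tilde{\psi}_i$, where $\tilde{\psi}_i$ is the surjection of Proposition~\ref{pretwistscenarioproposition}; then $\tilde{N}_i \subseteq \SL_2(\Z/\ol{m}_i\Z)$ by construction, the inclusion $\tilde{N}_i \subseteq N^{\ol{G}}(\ol{m}_i)$ was already noted in the proof of that proposition, and $\tilde{N}_i = \ker\tilde{\psi}_i$ is automatically normal in $\ol{G}(\ol{m}_i)$.

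For the commutator condition \eqref{keypropertyofNsubitilde} I would invoke \eqref{commutatorcontainmentsforNsubS} of Lemma~\ref{NsubSlemma} verbatim: with $\ol{G}(\ol{m}_i) = G(\ol{m}_i)$ and $\tilde{N}_i = N^G_{\SL_2}(\ol{m}_i)$, the two displays of \eqref{commutatorcontainmentsforNsubS} read exactly $[\ol{G}(\ol{m}_i),N^{\ol{G}}(\ol{m}_i)] \subseteq \tilde{N}_i$ and $[\ol{G}(\ol{m}_i),\ol{G}(\ol{m}_i)] \nsubseteq \tilde{N}_i$. The maximality clause then follows from Corollary~\ref{nontrivialabelianquotientcor}: if $G \in \mc{G}_{\nonab}^{\max}$ then $\ol{\Gamma}$ is abelian, so $[\ol{G}(\ol{m}_i),\ol{G}(\ol{m}_i)] \subseteq N^{\ol{G}}(\ol{m}_i) \cap \SL_2(\Z/\ol{m}_i\Z)$; were $\tilde{N}_i$ equal to $N^{\ol{G}}(\ol{m}_i)\cap\SL_2(\Z/\ol{m}_i\Z)$ this would contradict $[\ol{G}(\ol{m}_i),\ol{G}(\ol{m}_i)] \nsubseteq \tilde{N}_i$, forcing the asserted strict inequality.

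Next I would produce the geometric statement by taking $B := \Gamma_{\SL_2}$, $\tilde{H}_i := G_{\SL_2}(\ol{m}_i) \subseteq \ol{G}_{\SL_2}(\ol{m}_i)$, and letting $\theta_i : B \to \tilde{\psi}_i(\tilde{H}_i) = \tilde{\Gamma}_{i,\SL_2}$ be the isomorphisms of Proposition~\ref{pretwistscenarioproposition}(2); the fibering compatibility \eqref{respectsthefibering} is then precisely the relation $\tilde{\varpi}_i\vert_{\tilde{\Gamma}_{i,\SL_2}} \circ \theta_i = \varpi\vert_{\Gamma_{\SL_2}}$ of that proposition, since for $b \in B$ both $\tilde{\varpi}_1(\theta_1(b))$ and $\tilde{\varpi}_2(\theta_2(b))$ equal $\varpi(b)$. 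With these choices $\theta_i(B) = \tilde{\psi}_i(\tilde{H}_i)$, so $\tilde{\psi}_i\vert_{\tilde{H}_i}^{-1}(\theta_i(B)) = \tilde{H}_i$ and hence $S$ is exactly the fibered product appearing inside $\pi_{\SL_2}^{-1}(\cdot)$ on the right of \eqref{criterioncheckableatlowerlevel}. That identity gives $\pi_{\SL_2}^{-1}(S) = G_{\SL_2}(m)$; since $m_{\SL_2}(G) = \ol{m}$ by \eqref{twistcoverconditions}, the groups $\pi_{\SL_2}^{-1}(S)$ and $G_{\SL_2} = G \cap \SL_2(\hat{\mbz})$ coincide as subgroups of $\SL_2(\hat{\mbz})$. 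Consequently $\tilde{S} \cap \SL_2(\hat{\mbz}) = \langle \pi_{\SL_2}^{-1}(S),-I\rangle = \langle G_{\SL_2},-I\rangle = \tilde{G}\cap\SL_2(\hat{\mbz})$, and the criterion $X_{\tilde{G}_1} \simeq_{\ol{\mbq}} X_{\tilde{G}_2} \Longleftrightarrow \tilde{G}_1 \cap \SL_2(\hat{\mbz}) \doteq \tilde{G}_2 \cap \SL_2(\hat{\mbz})$ yields $X_{\tilde{G}} \simeq_{\ol{\mbq}} X_{\tilde{S}}$.

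Finally, to obtain the cover chain I would enlarge $\tilde{H}_i$ to all of $\ol{G}_{\SL_2}(\ol{m}_i)$, viewing each $\theta_i$ as an embedding $B \hookrightarrow \tilde{\psi}_i(\ol{G}_{\SL_2}(\ol{m}_i))$ via $\tilde{\psi}_i(\tilde{H}_i) \subseteq \tilde{\psi}_i(\ol{G}_{\SL_2}(\ol{m}_i))$, and form $S'$ as in \eqref{defofSprime}. Since $\tilde{H}_i \subseteq \ol{G}_{\SL_2}(\ol{m}_i)$ we have $S \subseteq S'$, giving a geometric cover $X_{\tilde{G}} \simeq_{\ol{\mbq}} X_{\tilde{S}} \to X_{\tilde{S}'}$. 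For the second cover I would check $S' \subseteq \ol{G}_{\SL_2}(\ol{m}) = \ol{G}_{\SL_2}(\ol{m}_1) \times_{\ol{\psi}} \ol{G}_{\SL_2}(\ol{m}_2)$: for $(s_1,s_2) \in S'$ with common fiber $b := \theta_1^{-1}(\tilde{\psi}_1(s_1)) = \theta_2^{-1}(\tilde{\psi}_2(s_2))$, the functional equation \eqref{psibarequalsvarpicircpsitilde} gives $\ol{\psi}_i(s_i) = \tilde{\varpi}_i(\tilde{\psi}_i(s_i)) = \tilde{\varpi}_i(\theta_i(b))$, and \eqref{respectsthefibering} makes these agree for $i = 1, 2$, so $\ol{\psi}_1(s_1) = \ol{\psi}_2(s_2)$ and $(s_1,s_2) \in \ol{G}_{\SL_2}(\ol{m})$. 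The chain $S \subseteq S' \subseteq \ol{G}_{\SL_2}(\ol{m})$ then yields the asserted tower of geometric covers. The main obstacle is organizational rather than deep: one must keep straight the three nested $\SL_2$-groups and verify that the single fibering-compatibility \eqref{respectsthefibering}, inherited from Proposition~\ref{pretwistscenarioproposition}, is exactly what is needed both to identify $S$ with $G_{\SL_2}(\ol{m})$ and to place $S'$ inside the $\ol{G}$-fibered product; the geometric-cover statements then reduce to the $\SL_2$-isomorphism criterion applied to these subgroup inclusions.
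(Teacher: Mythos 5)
Your proposal is correct and matches the paper's own proof essentially step for step: the paper likewise proves the corollary as a direct translation of Proposition~\ref{pretwistscenarioproposition} with $\tilde{N}_i = N^G_{\SL_2}(\ol{m}_i)$, $B = \Gamma_{\SL_2}$, $\tilde{H}_i = G_{\SL_2}(\ol{m}_i)$, deduces \eqref{keypropertyofNsubitilde} from Lemma~\ref{NsubSlemma}, gets properness of $\tilde{N}_i$ in the maximal case from $[\ol{G}(\ol{m}_i),\ol{G}(\ol{m}_i)] \subseteq N^{\ol{G}}(\ol{m}_i) \cap \SL_2(\mbz/\ol{m}_i\mbz)$, and establishes the cover tower via $S \subseteq S'$ together with the same computation $\ol{\psi}_1(s_1) = \tilde{\varpi}_1(\tilde{\psi}_1(s_1)) = \tilde{\varpi}_1(\theta_1(b)) = \tilde{\varpi}_2(\theta_2(b)) = \ol{\psi}_2(s_2)$ showing $S' \subseteq \ol{G}_{\SL_2}(\ol{m})$. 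Your explicit use of $G(\ol{m}) = \ol{G}(\ol{m})$ and of the $\SL_2$-intersection criterion to justify $X_{\tilde{G}} \simeq_{\ol{\mbq}} X_{\tilde{S}}$ merely spells out what the paper leaves implicit in citing \eqref{criterioncheckableatlowerlevel}.
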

\begin{proof}
This is essentially a direct translation of Proposition \ref{pretwistscenarioproposition}, taking $\tilde{N}_i = N^G_{\SL_2}(\ol{m}_i)$, $B=\Gamma_{\SL_2}$, and $\tilde{H}_i = G_{\SL_2}(\ol{m}_i)$. It is straightforward to verify that $\tilde{N}_i \unlhd \ol{G}(\ol{m}_i)$ and $\tilde{N}_i\subseteq N^{\ol{G}}(\ol{m}_i)\cap \SL_2(\Z/\ol{m}_i\Z)$. The fact that $[\ol{G}(\ol{m}_i),N^{\ol{G}}(\ol{m}_i)]\subseteq \tilde{N}_i$ and $[\ol{G}(\ol{m}_i),\ol{G}(\ol{m}_i)]\nsubseteq\tilde{N}_i$ can be seen directly from Lemma \ref{NsubSlemma}.  Finally, in case $G \in \mc{G}_{\nonab}^{\max}$, we have $[\ol{G}(\ol{m}_i),\ol{G}(\ol{m}_i)] \subseteq N^{\ol{G}}(\ol{m}_i) \cap \SL_2(\mbz/\ol{m}_i\mbz)$, which by \eqref{keypropertyofNsubitilde} forces $\tilde{N}_i$ to be a proper subgroup of $N^{\ol{G}}(\ol{m}_i) \cap \SL_2(\mbz/\ol{m}_i\mbz)$.

Regarding the modular curve $X_{\tilde{S}'}$, it is straightforward to see that $S \subseteq S'$, so it follows immediately from $X_{\tilde{G}} \simeq_{\ol{\mbq}} X_{\tilde{S}}$ that $X_{\tilde{G}}$ is a geometric cover of $X_{\tilde{S}'}$.  Finally, we claim that $S' \subseteq \ol{G}_{\SL_2}(\ol{m})$.  Indeed, if $(s_1,s_2) \in S'$, then for each $i \in \{1, 2\}$, we have $\tilde{\psi}_i(s_i) = \theta(b)$ for some (fixed) $b \in B$.  Now using \eqref{psibarequalsvarpicircpsitilde} together with \eqref{respectsthefibering}, we find that
\[
\ol{\psi}_1(s_1) = \tilde{\varpi}_1\left( \tilde{\psi}_1(s_1) \right) = \tilde{\varpi}_1\left( \theta_1(b) \right) = \tilde{\varpi}_2 \left( \theta_2(b) \right) = \tilde{\varpi}_2\left( \tilde{\psi}_2(s_2) \right) = \ol{\psi}_2(s_2).
\]
Thus $(s_1,s_2) \in \ol{G}_{\SL_2}(\ol{m})$, which establishes that $X_{\tilde{S}'}$ is a geometric cover of $X_{\tilde{\ol{G}}}$, finishing the proof.
\end{proof}

\begin{remark} \label{keyremarktobeused}
We included the group $S' \supseteq S$ in the statement of Corollary \ref{keycorollarytobeused} since it somewhat simplifies our computer search.  Indeed, since $\genus(X_{\tilde{G}}) \geq \genus(X_{\tilde{S}'})$, if for a given $\ol{G}$ our search produces no $S'$ as in \eqref{defofSprime} with $\genus(X_{\tilde{S}'}) = 0$, then it follows that there are no twist covers $G \in \mc{G}_{\nonab}^{\max}$ of $\ol{G}$ with $\genus(X_{\tilde{G}}) = 0$. 
\end{remark}

\subsection{An algorithm to search for pre-twist groups with maximal twist covers}
We now describe the algorithm used to search for pre-twist groups of genus zero that have maximal twist covers of genus zero. \\

\noindent \textbf{Step 1.}  For a fixed level
\begin{equation} \label{listofmbars}
\ol{m} \in \left\{ \begin{matrix} 6,10,12,14,15,18,20,21,22,24,26,28,30, \\ 36,40,42,48,50,52,54,56,60,64,72,96 \end{matrix} \right\},
\end{equation}
we construct (as a list) the set $\mc{G}^{m_{\GL_2}=\ol{m}}(0)$ of open subgroups $\ol{G} \subseteq \GL_2(\hat{\mbz})$ of genus zero and $\GL_2$-level $\ol{m}$ (see Corollary \ref{nontrivialabelianquotientcor} and Definition \ref{pretwistgroupdefinition}). \\

\noindent \textbf{Step 2.}  For each permissible factorization $\ol{m} = \ol{m}_1 \ol{m}_2$, we construct the subset $\mc{G}_{\ab}^{m_{\GL_2}=\ol{m}}\left(0,(\ol{m}_1,\ol{m}_2) \right)$ of all $\ol{G} \in \mc{G}^{m_{\GL_2}=\ol{m}}(0)$ with the property that, under $\ol{G}(\ol{m}) \subseteq \ol{G}(\ol{m}_1) \times \ol{G}(\ol{m}_2)$, the common quotient in the fibered product associated to $\ol{G}(\ol{m})$ via Lemma \ref{goursatlemma} is a non-trivial abelian group (see Corollary \ref{nontrivialabelianquotientcor}). \\

\noindent \textbf{Step 3.}  For each $\ol{G} \in \mc{G}_{\ab}^{m_{\GL_2}=\ol{m}}\left(0,(\ol{m}_1,\ol{m}_2) \right)$, denoting by $\ol{\psi} = (\ol{\psi}_1,\ol{\psi}_2)$ the pair of surjective group homomorphisms implicit in the fibered product $\ol{G}(\ol{m}) \simeq \ol{G}(\ol{m}_1) \times_{\ol{\psi}} \ol{G}(\ol{m}_2)$ and by $N^{\ol{G}}(\ol{m}_i) := \ker \ol{\psi}_i \subseteq \ol{G}(\ol{m}_i)$, we search for normal subgroups $\tilde{N}_i \unlhd \ol{G}(\ol{m}_i)$ satisfying $\tilde{N}_i \subsetneq N^{\ol{G}}(\ol{m}_i) \cap \SL_2(\mbz/\ol{m}_i\mbz)$ and the property \eqref{keypropertyofNsubitilde}.  We create a new list $\mc{G}_{\ab,\; \pot}^{m_{\GL_2}=\ol{m}}\left( 0,(\ol{m}_1,\ol{m}_2) \right)$ of \emph{potential} pre-twist groups, consisting of the triples $(\ol{G}(\ol{m}), \tilde{N}_1, \tilde{N}_2)$ found by this search.  Note that a given group $\ol{G}(\ol{m})$ may belong to more than one triple in this list. \\

\noindent \textbf{Step 4.} For each triple $(\ol{G}(\ol{m}), \tilde{N}_1, \tilde{N}_2) \in \mc{G}_{\ab,\; \pot}^{m_{\GL_2}=\ol{m}}\left( 0,(\ol{m}_1,\ol{m}_2) \right)$, denoting by $\tilde{\Gamma}_i := \ol{G}(\ol{m}_i)/\tilde{N}_i$ and by $\tilde{\psi}_i : \ol{G}(\ol{m}_i) \twoheadrightarrow \tilde{\Gamma}_i$ the canonical projection, we search for finite groups $B$ together with embeddings $\theta_i : B \hookrightarrow \tilde{\psi}_i\left( \ol{G}_{\SL_2}(\ol{m}_i) \right)$ satisfying \eqref{respectsthefibering}.  For each such pair $\left( B,\theta = (\theta_1,\theta_2) \right)$, we form the fibered product
\[
S' := \tilde{\psi}_1 |_{\ol{G}_{\SL_2}(\ol{m}_1)}^{-1} \left( \theta_1(B) \right) \times_{\theta^{-1} \tilde{\psi}} \tilde{\psi}_2 |_{\ol{G}_{\SL_2}(\ol{m}_2)}^{-1} \left( \theta_2(B) \right)
\]
and form a new \emph{final} list $\mc{G}_{\ab,\; \fin}^{m_{\GL_2}=\ol{m}}\left( 0,(\ol{m}_1,\ol{m}_2) \right)$ consisting of those quadruples $(\ol{G}(\ol{m}),\tilde{N}_1,\tilde{N}_2,S')$ for which 
the genus of $X_{\tilde{S}'}$ is zero (see Corollary \ref{keycorollarytobeused} and Remark \ref{keyremarktobeused}). \\
\begin{remark}
When computing the initial list of genus zero subgroups $G \subseteq \GL_2(\hat{\mbz})$ in Step $1$, we make use of the following memory-saving measures:
\begin{enumerate}
    \item For any level $\ol{m}$ that does not appear in the list \eqref{cumminspaulilistoflevels}, by Lemma \ref{levelincreaseboundcor}, any $G$ of genus zero and $\GL_2$-level $\ol{m}$ must satisfy $-I \notin G$ and $m_{\GL_2}(\tilde{G}) = \ol{m}/2$.  We therefore first construct the list of subgroups $G_0$ of $\GL_2$-level $\ol{m}/2$ satisfying $-I \in G_0$ and then, for each such $G_0$, search for index two subgroups $G \subseteq G_0$ of $\GL_2$-level $\ol{m}$ with $-I \notin G$.
    \item Searching directly among all subgroups of $\GL_2$-level $48$ is memory-intensive enough to be prohibitively difficult on most machines.  To work around this problem, we instead loaded separately the list of all genus zero subgroups of level $3$ and the list of all genus zero subgroups of level $16$, and then constructed directly every possible fibered product between the groups arising in those two lists.
\end{enumerate}
\end{remark}
\noindent \emph{Proof of Proposition \ref{keypropositionformainthm}.}
The above algorithm was implemented on a computer, using the MAGMA computational algebra system\footnote{MAGMA code for this search has been made available on the arXiv.} \cite{magma}.  For each level $\ol{m}$ from Step 1 and permissible factorization $\ol{m} = \ol{m}_1 \ol{m}_2$, the search concluded that $\mc{G}_{\ab,\; \fin}^{m_{\GL_2}=\ol{m}}\left( 0,(\ol{m}_1,\ol{m}_2) \right) = \emptyset$.  By Remark \ref{keyremarktobeused}, we conclude that
\[
G \in \mc{G}_{\nonab}^{\max}(0) \; \Longrightarrow \; m_{\GL_2}(G) = m_{\SL_2}(G).
\]

Finally, the assertion \eqref{finitelistforkeyprop} follows from \eqref{thinneddownlevels}, together with a straightforward computer search that we also carried out using MAGMA.  This search also yielded the data in Tables \ref{tableforlevelsneq30}, \ref{tableforlevel30} and \ref{tableforlevel60} of Section \ref{introduction}. \hfill $\Box$ \\



\begin{remark}
The key takeaway from Proposition \ref{keypropositionformainthm} is that we have a finite list of $\GL_2$-levels to consider when searching for maximal genus $0$ non-abelian entanglement groups $G$ (each necessarily satisfying $m_{\GL_2}(G)=m_{\SL_2}(G)$, by the proposition). This second search is then what establishes \eqref{listofmaximalgroups} in Theorem \ref{mainthm}.
\end{remark}

\bigskip


\section{Explicit Models for Modular Curves}
\label{Section:ExplicitModels}

In the previous section, we proved the first part of Theorem \ref{mainthm}, which is that (up to conjugation) there are exactly four maximal non-abelian entanglement groups, $G \in\left\{G_6,G_{10},G_{15},G_{18}\right\}$, for which the associated modular curve $X_{\tilde G}$ has genus $0$.
In all four cases the underlying entanglement is $S_3$, and in all four cases $-I\in G$ so that $\tilde{G}=G$.
Three of the curves are defined over $\Q$, while $X_{G_{15}}$ is defined over $\Q(\sqrt{-15})$.
In this section, we complete the proof of the theorem by determining explicit equations for the modular curves.
More precisely, we determine a rational parameter $t$ on each $X_{G}$ as well as an explicit formula for $j_G(t)$. Work for one of the curves, $X_{G_6}$, is omitted, as this curve was previously studied in \cite{braujones}.

Our approach to finding the explicit models is essentially one of ``gluing'' along the common non-abelian quotient $\Gamma$, in the decomposition of $G=G_m$ into the fiber product $G(m)=G(m_1)\times_{\psi} G(m_2)$.
This process is described {\em in general} in Section \ref{Section:GeneralYoga}, which will also serve as a foundation for future work (when $g>0$).
However, for each of the curves being considered here, the specific underlying entanglement group is $\Gamma\cong S_3$.
Hence, we prove in Section \ref{Section:S3Yoga} a lemma that explicitly describes the gluing mechanism in that special case.
Once the computational framework is fully developed in principle, it is then implemented in each of the three cases using SageMath \cite{sage}.

Since the groups $G(m_1)$ and $G(m_2)$ play such a crucial role in our analysis, this data is collected below in Table \ref{Table:CPData}. Note that $\borel(3)$ refers to the Borel group at $3$, while $\XG$ refers to the unique index $5$ subgroup of $\GL_2\left(\Z/5\Z\right)$ containing $\mc{N}_s(5)$ (the normalizer of split Cartan). We also include for reference the usual modular curve data vector $(d,c_2,c_3,c_{\infty})$ in each case, as well as the Cummins-Pauli label for the curve. 

{
\renewcommand{\arraystretch}{1.25}
\begin{table}[!ht]\label{Table:CPData}
\begin{center}
\begin{tabular}{ c | c | c | c }
$m$ & $G(m_1),G(m_2)$ & $(d,c_2,c_3,c_{\infty})$ & \text{C-P Label}\\ \hline
$6$ & $\GL_2(\Z/2\Z),\GL_2(\Z/3\Z)$ & $(6,0,3,1)$ & $6A^0$\\
$10$ & $\GL_2(\Z/2\Z),\XG$ & $(30,0,6,3)$ & $10E^0$ \\
$15$ & $\GL_2(\Z/3\Z),\XG$ &  $(15,3,3,1)$ & $15A^0$ \\
$18$ & $\GL_2(\Z/2\Z),\pi_9^{-1}\left(\borel(3)\right)$ & $(24,0,3,4)$ & $18C^0$
\end{tabular}
\end{center}\caption{Maximal Genus $0$ Non-Abelian Entanglement Curves}
\end{table}
}

\subsection{General Entanglement Curve Yoga}\label{Section:GeneralYoga}

Fix a non-abelian entanglement scenario, i.e., two subgroups, $G(m_1)\subseteq \GL_2\left(\Z/m_1\Z\right)$ and $G(m_2)\subseteq \GL_2\left(\Z/m_2\Z\right)$ (where $(m_1,m_2)=1$), which surject onto a common non-abelian quotient $\Gamma$ with kernels $N(m_1)$ and $N(m_2)$. For simplicity, assume that $-I$ is contained in each $N(m_i)$. We say that an elliptic curve $E/K$ has an entanglement of type $(G(m_1),N(m_1),G(m_2),N(m_2))$ if bases for $E[m_1]$ and $E[m_2]$ may be chosen over $\ol{K}$ such that\\
\\
\indent (1) $\Gal(K(E[m_1])/K)\cong G(m_1)$\\
\indent (2) $\Gal(K(E[m_2])/K)\cong G(m_2)$ and \\
\indent (3) $K(E[m_1])\cap K(E[m_2])=K(E[m_1])^{N(m_1)}=K(E[m_2])^{N(m_2)}$.\\
\\
The isomorphisms in (1) and (2) are induced by the isomorphisms of $\Aut(E[m_i])$ with $\GL_2(\Z/m_i\Z)$ that are determined by the choice of bases. Then the fixed fields in (3) are defined via those isomorphisms. Whenever the kernels $N(m_1)$ and $N(m_2)$ are uniquely determined by $G(m_1)$, $G(m_2)$ and $\Gamma$, we say that $E/K$ has an entanglement of type $(G(m_1),G(m_2),\Gamma)$ and simplify (3) to the following equivalent condition.\\
\\
\indent (3') $\Gal(K(E[m_1])\cap K(E[m_2])/K)\cong \Gamma$\\

In this section, we develop a method for determining explicit equations for a finite set of modular curves whose $K$-rational points ``correspond generically'' to elliptic curves $E/K$ that have an entanglement of type $(G(m_1),N(m_1),G(m_2),N(m_2))$, meaning that every such elliptic curve must correspond to a $K$-rational point on one of the modular curves. More precisely, after an appropriate choice of basis for $E[m_1m_2]$ over $\ol{K}$ we have
$$\Gal(K(E[m_1m_2])/K)\subseteq G(m_1)\times_{\psi}G(m_2)$$
for some $\psi_i:G(m_i)\twoheadrightarrow\Gamma$ with $\ker\psi_i=N(m_i)$ if and only if $j(E)$ lifts to a $K$-rational point on one of the modular curves.

The first step is to find an explicit model for the ``full product'' curve, the modular curve $X:=X_{G(m_1),G(m_2)}$ whose $K$-rational points correspond generically to elliptic curves $E/K$ that satisfy properties (1) and (2) from above. Since $-I$ is contained in both groups, $X$ can be obtained by crossing the modular curves $X_{G(m_1)}$ and $X_{G(m_2)}$ over the $j$-line. Next, we determine explicit models for the modular curves, $Y_{\Gamma,i}$ ($i=1,2$), which lie over $X$ and whose $K$-rational points correspond generically to elliptic curves $E/K$ for which $\Gal(K(E[m_i])/K)=N(m_i)$. Then each $Y_{\Gamma,i}$ is a Galois cover of $X$, whose Galois group, i.e., the Galois group of the corresponding extension of function fields, is isomorphic to $\Gamma$.
For each choice of isomorphisms, $\sigma_i:\Gal(Y_{\Gamma,i}/X)\to \Gamma$, it makes sense to form the diagonal quotient $\cX_{\sigma_1,\sigma_2}$ of the fiber product of $Y_{\Gamma,1}$ and $Y_{\Gamma,2}$ over $X$.
$$\cX_{\sigma_1,\sigma_2}:=Y_{\Gamma,1}\times_{X} Y_{\Gamma,2}/\left\{(\sigma_1^{-1}(g),\sigma_2^{-1}(g))|g\in \Gamma\right\}$$
\begin{Theorem}\label{Thm:GeneralYoga1}
Let $P$ be a $K$-rational point on $X_{G(m_1),G(m_2)}$, corresponding to an elliptic curve $E/K$ satisfying properties (1) and (2) from above. Then $E$ satisfies condition (3) if and only if $P$ lifts to a $K$-rational point on some $\cX_{\sigma_1,\sigma_2}$.
\end{Theorem}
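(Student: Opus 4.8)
The plan is to descend everything to the fiber over $P$ and translate the existence of a lift into a statement in Galois cohomology. Write $E/K$ for the elliptic curve attached to $P$ and set $L_i := K(E[m_i])^{N(m_i)}$ for $i \in \{1,2\}$; by properties (1) and (2), each $L_i/K$ is Galois with $\Gal(L_i/K) \cong \Gamma$ via $\psi_i$, and condition (3) is exactly the chain $K(E[m_1]) \cap K(E[m_2]) = L_1 = L_2$. The first step is to identify the fiber $T_i := (Y_{\Gamma,i})_P$. Since $Y_{\Gamma,i} \to X$ is a $\Gamma$-Galois cover defined over $K$ and $P \in X(K)$, the fiber $T_i$ is a $\Gamma$-torsor over $K$, and I would check that its class in $H^1(K,\Gamma)$ is represented by the surjection
\[
\rho_i : \Gal(\overline{K}/K) \twoheadrightarrow \Gal(K(E[m_i])/K) \cong G(m_i) \twoheadrightarrow G(m_i)/N(m_i) \cong \Gamma,
\]
so that $\ker \rho_i = \Gal(\overline{K}/L_i)$; concretely, the whole torsor $T_i$ records the extension $L_i/K$ together with its $\Gamma$-structure, where the identification of the deck group with $\Gamma$ is the datum $\sigma_i$ (well defined up to $\Aut(\Gamma)$).

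Next I would compute the fiber of $\cX_{\sigma_1,\sigma_2}$ over $P$. Because a quotient by a finite group commutes with base change to $P = \Spec K$, this fiber is $(T_1 \times_K T_2)/\Delta$, where $\Delta = \{(\sigma_1^{-1}(g),\sigma_2^{-1}(g)) : g \in \Gamma\}$. Choosing geometric base points identifies $(T_1 \times_K T_2)(\overline{K})$ with $\Gamma \times \Gamma$, on which $\Gal(\overline{K}/K)$ acts by $\sigma \cdot (x_1,x_2) = (\rho_1(\sigma)x_1,\, \rho_2(\sigma)x_2)$ and $\Delta$ acts by simultaneous right translation. Hence the quotient is identified Galois-equivariantly with $\Gamma$ via $(x_1,x_2) \mapsto x_1 x_2^{-1}$, the residual action being $z \mapsto \rho_1(\sigma)\, z\, \rho_2(\sigma)^{-1}$. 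Therefore a $K$-point of $\cX_{\sigma_1,\sigma_2}$ over $P$ exists if and only if there is $z \in \Gamma$ with $\rho_1(\sigma) = z\, \rho_2(\sigma)\, z^{-1}$ for all $\sigma$, i.e. if and only if $\rho_1$ and $\rho_2$ differ by an inner automorphism of $\Gamma$.

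To finish, I would let the identifications $\sigma_1,\sigma_2$ vary. Since changing $\sigma_i$ composes $\rho_i$ with an element of $\Aut(\Gamma)$, the statement ``$P$ lifts to some $\cX_{\sigma_1,\sigma_2}(K)$'' is equivalent to ``$\rho_1$ and $\rho_2$ differ by some element of $\Aut(\Gamma)$'' (inner automorphisms already lie in $\Aut(\Gamma)$, so ranging over $\sigma_i$ absorbs the conjugating $z$). For two surjections onto $\Gamma$ this happens exactly when $\ker\rho_1 = \ker\rho_2$, that is, when $L_1 = L_2$ as subfields of $\overline{K}$. This gives both directions of the theorem once $L_1 = L_2$ is matched with condition (3): conversely, condition (3) yields $\ker\rho_1 = \Gal(\overline{K}/L_1) = \Gal(\overline{K}/L_2) = \ker\rho_2$, hence a lift; and a lift forces $L_1 = L_2$, whence $L_1 = L_2 \subseteq K(E[m_1]) \cap K(E[m_2])$.

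I expect the main obstacle to be twofold and to live at the two ends of this argument. First, one must pin down rigorously that the abstract deck action of $\Gamma = \Gal(Y_{\Gamma,i}/X)$ — a priori only a \emph{generic} modular interpretation — specializes at the specific point $P$ to the $\Gamma$-structure on $L_i/K$ induced by $G(m_i)/N(m_i)$, so that $T_i$ really is the torsor classified by $\rho_i$. Second, and more delicate, is the passage from the bare field equality $L_1 = L_2$ back to the full equality $K(E[m_1]) \cap K(E[m_2]) = L_1 = L_2$ demanded by condition (3): here I would invoke the Goursat description of $\Gal(K(E[m_1]K(E[m_2])/K)$ inside $G(m_1) \times G(m_2)$ together with the coprimality of $m_1$ and $m_2$ (and the attendant ramification constraints on division fields) to exclude the degenerate possibility that the entanglement field $K(E[m_1]) \cap K(E[m_2])$ is strictly larger than the common field $L_1 = L_2$.
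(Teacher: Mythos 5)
Your proposal is correct in substance, but it reaches the equivalence by a genuinely different route than the paper. The paper argues directly with rational points and function fields: in the forward direction it builds an explicit $L$-valued point $\hat{P}=(\hat{\alpha}_1,\hat{\alpha}_2)$ from embeddings $\alpha_i:L\hookrightarrow K(E[m_i])$, \emph{chooses} the identifications $\sigma_i(\tau)=\alpha_i^{-1}\tau\alpha_i$ to suit $P$, and verifies by hand that $\hat{P}$ is fixed by $\Gal(L/K)$, hence $K$-rational; in the converse direction it observes that no nontrivial diagonal element acts trivially on either $Y_{\Gamma,i}$, so the function fields of $Y_{\Gamma,1}$, $Y_{\Gamma,2}$ and $\cX_{\sigma_1,\sigma_2}$ are linearly disjoint with the compositum of any two containing the third, whence specialization at a rational lift $\hat{P}$ forces an isomorphism of the two fibers over $P$, i.e. $L_1=L_2$. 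Your torsor computation packages both directions at once: identifying $(Y_{\Gamma,i})_P$ with the $\Gamma$-torsor classified by $\rho_i$, and the fiber of $\cX_{\sigma_1,\sigma_2}$ over $P$ with $\Gamma$ carrying the twisted action $z\mapsto\rho_1(\sigma)\,z\,\rho_2(\sigma)^{-1}$, reduces the existence of a lift to $\rho_1=\mathrm{inn}_z\circ\rho_2$, and letting $(\sigma_1,\sigma_2)$ vary absorbs all of $\Aut(\Gamma)$, giving the clean criterion $\ker\rho_1=\ker\rho_2$, i.e. $L_1=L_2$. Your base-change step is legitimate here because the diagonal acts freely on the fibers (the same freeness the paper exploits) and everything over $P$ is \'{e}tale in characteristic zero. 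What each buys: your version is basepoint-symmetric and makes the counting in the paper's Theorem \ref{Thm:HowManyCurves} (exactly $|\Aut\Gamma|$ distinct quotients, with $\Inn\Gamma$ acting by isomorphisms over $X$) completely transparent; the paper's version produces the explicit point $\hat{P}$ and stays in the function-field language it needs later when computing models via Lemma \ref{Lemma:DiagonalCubic}. Your first flagged obstacle is handled in the paper exactly as you anticipate: it simply takes as given, from the construction of $Y_{\Gamma,i}$, that the specialization of its function field at $P$ is the fixed field $K(E[m_i])^{N(m_i)}$, which is justified because (1) and (2) hold on the nose, so the fiber is connected and the specialization is generic.

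Your second flagged obstacle deserves a warning in both directions. You are right that your argument only yields $L_1=L_2$, which is a priori weaker than the literal condition (3) (equality with the full intersection $K(E[m_1])\cap K(E[m_2])$) --- but note that the paper's proof does not close this either: its converse ends by asserting ``So Condition (3) holds'' immediately after establishing the fiber isomorphism. Moreover, your proposed repair cannot work unconditionally: coprimality of $m_1,m_2$ and ramification considerations do not prevent the Goursat quotient of $\Gal(K(E[m_1m_2])/K)$ from being strictly larger than $\Gamma$ compatibly with $\psi_1,\psi_2$. For instance, in the paper's own level-$15$ scenario, both $\GL_2(\mbz/3\mbz)$ and the exceptional group $\XG$ admit $S_4$ as a quotient refining their common $S_3$ quotient, so the stated hypotheses alone do not exclude a point $P$ at which the entanglement field strictly contains $L_1=L_2$ even though the lift exists. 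The reading consistent with the paper's pervasive ``correspond generically'' hedging is that condition (3) should be interpreted as $\Gal(K(E[m_1m_2])/K)\subseteq G(m_1)\times_{\psi}G(m_2)$ for some $\psi$ with the prescribed kernels --- equivalently, $L_1=L_2$ compatibly --- and that is precisely what both your argument and the paper's actually prove; so drop the attempted exclusion step rather than trying to force it.
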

\begin{proof}
First suppose that $E/K$ satisfies property (3) from above. Then for some Galois extension $L/K$ there are injections, $\alpha_i:L\hookrightarrow K(E[m_i])$ (over $K$), which identify $L$ with the fixed field of $N(m_i)$. But this fixed field is precisely the specialization of the function field of $Y_{\Gamma,i}$ to $P$. Thus, $\alpha_i$ induces an $L$-valued point, $\hat{\alpha}_i: \text{Spec}(L)\to Y_{\Gamma,i}$, which restricts to an isomorphism (over $K$) on the fiber over $P$. Identifying $\Gal(L/K)$ with $\Gamma$, we define $\sigma_1$ and $\sigma_2$ as follows. For $\tau\in \Gal(Y_{\Gamma,i}/X)$, we set $\sigma_i(\tau)=\alpha_i^{-1}\tau\alpha_i$. Consider the $L$-valued point of $\cX_{\sigma_1,\sigma_2}$ given by $\hat{P}=(\hat{\alpha}_1,\hat{\alpha}_2)$, which clearly lies over $P$. For any $g\in \Gal(L/K)$, we have $g(\hat{P})=(\hat{\alpha}_1\hat{g},\hat{\alpha}_2\hat{g})$, where $\hat{g}$ is the induced automorphism on $\text{Spec}(L)$ over $K$. On the other hand, if we act on $\hat{P}$ geometrically by $(\sigma_1^{-1}(g),\sigma_2^{-1}(g))$, we get
$$(\hat{\alpha}_1\hat{g}\hat{\alpha}_1^{-1}\hat{\alpha}_1,\hat{\alpha}_2\hat{g}\hat{\alpha}_2^{-1}\hat{\alpha}_2)=(\hat{\alpha}_1\hat{g},\hat{\alpha}_2\hat{g})=g(\hat{P}).$$
Thus, $\hat{P}$ is actually fixed by $\Gal(L/K)$ and hence $K$-rational.

Conversely, suppose $P$ lifts to a $K$-rational point $\hat{P}$ on some $\cX_{\sigma_1,\sigma_2}$. The key observation in this direction is that regardless of the choice of $(\sigma_1,\sigma_2)$, no nontrivial diagonal element $(g,g)$ fixes either $Y_{\Gamma,i}$. So the three function fields of $Y_{\Gamma,1}$, $Y_{\Gamma,2}$ and $\cX_{\sigma_1,\sigma_2}$ are all linearly disjoint inside the overall extension, and the compositum of any two contains the third.
$$Y_{\Gamma,1}\times_{X}\cX_{\sigma_1,\sigma_2} = Y_{\Gamma,1}\times_{X} Y_{\Gamma,2} = Y_{\Gamma,2}\times_{X}\cX_{\sigma_1,\sigma_2}$$
Specialization to $\hat{P}$ determines an isomorphism between the fibers of $Y_{\Gamma,1}$ and $Y_{\Gamma,2}$ over $P$. As noted above, this is equivalent to an isomorphism between the fixed fields of $K(E[m_1])$ and $K(E[m_2])$ by $N(m_1)$ and $N(m_2)$. So Condition (3) holds for $E/K$.
\end{proof}

While there are clearly $(\Aut\Gamma)^2$ choices for $(\sigma_1,\sigma_2)$, not all of the corresponding curves (i.e., function fields) of the form $\cX_{\sigma_1,\sigma_2}$ are distinct.
Moreover, the elements of the Galois group of $Y_{\Gamma,1}\times_{X} Y_{\Gamma,2}$ over $X$ may restrict to isomorphisms between some of these (distinct) intermediate fields. Therefore, it is not immediately clear from Theorem \ref{Thm:GeneralYoga1} how many distinct {\em modular curves} exist for each fixed entanglement type. The following theorem answers this question.

\begin{Theorem}\label{Thm:HowManyCurves}
There are $|\Aut\Gamma|$ distinct curves of the form $\cX_{\sigma_1,\sigma_2}$ lying over $X_{G(m_1),G(m_2)}$. However, each isomorphism class (over $X_{G(m_1),G(m_2)}$) is acted on faithfully by $\Inn\Gamma$. Hence, there are no more than $[\Aut\Gamma:\Inn\Gamma]$ modular curves for each fixed $(G(m_1),N(m_1),G(m_2),N(m_2))$ entanglement type.
\end{Theorem}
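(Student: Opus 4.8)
\emph{Proof plan.} The plan is to translate the counting problem entirely into the Galois theory of the cover $Y_{\Gamma,1}\times_X Y_{\Gamma,2}\to X$ (abbreviating $X:=X_{G(m_1),G(m_2)}$) and then to carry out a single conjugation computation inside $\Gamma\times\Gamma$. First I would record the structural fact, already underlying the construction in Theorem~\ref{Thm:GeneralYoga1}, that $Y_{\Gamma,1}$ and $Y_{\Gamma,2}$ are linearly disjoint over $X$, so that $Y_{\Gamma,1}\times_X Y_{\Gamma,2}$ is a connected Galois cover of $X$ with Galois group $\Gal(Y_{\Gamma,1}/X)\times\Gal(Y_{\Gamma,2}/X)$ (this is exactly what makes the ``no nontrivial diagonal element fixes either $Y_{\Gamma,i}$'' observation meaningful). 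I would then fix reference isomorphisms $\tau_i:\Gal(Y_{\Gamma,i}/X)\to\Gamma$ and use $(\tau_1,\tau_2)$ to identify this Galois group with $\Gamma\times\Gamma$. Every $\sigma_i$ is then uniquely $\sigma_i=\alpha_i\circ\tau_i$ with $\alpha_i\in\Aut\Gamma$, so pairs $(\sigma_1,\sigma_2)$ correspond to pairs $(\alpha_1,\alpha_2)\in(\Aut\Gamma)^2$, and by the Galois correspondence $\cX_{\sigma_1,\sigma_2}$ is the intermediate cover fixed by the diagonal subgroup, which under $(\tau_1,\tau_2)$ becomes $\{(\alpha_1^{-1}(g),\alpha_2^{-1}(g)):g\in\Gamma\}$.

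For the first assertion I would re-index by $h=\alpha_1^{-1}(g)$, turning the subgroup above into the \emph{graph} $\Delta_\beta:=\{(h,\beta(h)):h\in\Gamma\}$ of the single automorphism $\beta:=\alpha_2^{-1}\circ\alpha_1\in\Aut\Gamma$. Hence $\cX_{\sigma_1,\sigma_2}$ depends on $(\sigma_1,\sigma_2)$ only through $\beta$; and since a map is recovered from its graph, distinct $\beta$ yield distinct subgroups $\Delta_\beta$ and therefore (Galois correspondence) distinct intermediate covers. As $(\alpha_1,\alpha_2)$ ranges over $(\Aut\Gamma)^2$ the value $\beta=\alpha_2^{-1}\alpha_1$ ranges over all of $\Aut\Gamma$, so there are exactly $|\Aut\Gamma|$ distinct curves $\cX_{\sigma_1,\sigma_2}$.

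For the remaining assertions I would use that two intermediate covers are isomorphic over $X$ precisely when their subgroups are conjugate in $\Gamma\times\Gamma$, and compute the conjugate of $\Delta_\beta$ by an arbitrary $(a,b)$. Using $\beta(a^{-1}ka)=\beta(a)^{-1}\beta(k)\beta(a)$ one obtains
\[
(a,b)\,\Delta_\beta\,(a,b)^{-1}=\Delta_{\iota_c\circ\beta},\qquad \iota_c(x):=cxc^{-1},\quad c:=b\,\beta(a)^{-1}.
\]
Taking $a=1$ and letting $b$ run over $\Gamma$ shows $c$ runs over all of $\Gamma$, so the conjugacy class of $\Delta_\beta$ is exactly $\{\Delta_{\iota\circ\beta}:\iota\in\Inn\Gamma\}$. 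Thus $\Inn\Gamma$ acts on each such conjugacy class by $\iota\cdot\Delta_\beta=\Delta_{\iota\circ\beta}$; this action is transitive by the computation and free (hence faithful), since $\iota\circ\beta=\beta$ forces $\iota=\id$, which is precisely the stated faithfulness. Finally, $\Delta_\beta$ and $\Delta_{\beta'}$ are conjugate iff $\beta'\beta^{-1}\in\Inn\Gamma$, so the conjugacy classes — equivalently the $X$-isomorphism classes among the $\cX_{\sigma_1,\sigma_2}$ — are in bijection with the cosets of the normal subgroup $\Inn\Gamma\unlhd\Aut\Gamma$, of which there are $[\Aut\Gamma:\Inn\Gamma]$; since distinct modular curves give distinct $X$-isomorphism classes, there are no more than $[\Aut\Gamma:\Inn\Gamma]$ of them.

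The main obstacle is the conjugation computation isolating the $\Inn\Gamma$-orbit: one must keep straight the composition order in $\beta=\alpha_2^{-1}\alpha_1$, correctly absorb the $\beta(a)$ factor into $c=b\,\beta(a)^{-1}$, and invoke the normality of $\Inn\Gamma$ in $\Aut\Gamma$. Everything else is a formal application of the Galois correspondence, contingent only on the linear disjointness of $Y_{\Gamma,1}$ and $Y_{\Gamma,2}$ over $X$ already supplied by the framework of Theorem~\ref{Thm:GeneralYoga1}.
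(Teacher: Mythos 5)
Your proposal is correct and follows essentially the same route as the paper: both identify $\Gal(Y_{\Gamma,1}\times_X Y_{\Gamma,2}/X)$ with $\Gamma\times\Gamma$, count the quotient curves via the twisted-diagonal subgroups (your graphs $\Delta_\beta$ are the paper's diagonals re-indexed by $\beta=\alpha_2^{-1}\alpha_1$, recovering the $|\Aut\Gamma|$ count from the triviality of the diagonal $\Aut\Gamma$-action), and obtain the bound $[\Aut\Gamma:\Inn\Gamma]$ by computing that conjugation by $(a,b)$ replaces $\beta$ with $\iota_c\circ\beta$ — the same calculation the paper performs with $\tau_i(g)=g_i^{-1}gg_i$. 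Your refinements are cosmetic rather than structural: the single-automorphism normalization streamlines the bookkeeping, and you additionally record the converse direction of the conjugacy criterion (giving exactly, not merely at most, $[\Aut\Gamma:\Inn\Gamma]$ isomorphism classes over $X$), which the paper's upper bound does not require.
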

\begin{proof}
The group, $\Aut\Gamma\times\Aut\Gamma$ acts transitively on the set of curves, $\cX_{\sigma_1,\sigma_2}$, by post-composition on both sides.
$$(\tau_1,\tau_2):\cX_{\sigma_1,\sigma_2}\mapsto \cX_{\tau_1\sigma_1,\tau_2\sigma_2}$$
However, the diagonal subgroup acts trivially, since the group of geometric transformations by which the quotient of $Y_{\Gamma,1}\times_{X} Y_{\Gamma,2}$ is being taken remains the same. In fact, for any $\tau_1,\tau_2\in \Aut\Gamma\times\Aut\Gamma$ we have
$$\{(\sigma_1^{-1}(g),\sigma_2^{-1}(g))\}=\{((\tau_1\sigma_1)^{-1}(g),(\tau_2\sigma_2)^{-1}(g))\}\longleftrightarrow \tau_1=\tau_2.$$
So, the set $\{\cX_{\sigma_1,\sigma_2}\}$ actually only contains $|\Aut\Gamma|$ distinct curves, i.e, diagonal quotients of $Y_{\Gamma,1}\times_{X} Y_{\Gamma,2}$.

Now, fix $(\sigma_1,\sigma_2)$, which in turn fixes an isomorphism of $\Gal(Y_{\Gamma,1}\times_{X} Y_{\Gamma,2}/X)$ with $\Gamma\times \Gamma$. With this perspective, we may view the function field of $\cX_{\sigma_1,\sigma_2}$ as the fixed field of the diagonal subgroup, $\{(g,g)\}$. Moreover, any element $(g_1,g_2)$ then defines an isomorphism (via Galois) from this curve onto the one whose function field is fixed by $\{(g_1gg_1^{-1},g_2gg_2^{-1})\}$. It's easy to check that this curve is none other than $\cX_{\tau_1\sigma_1,\tau_2\sigma_2}$, where $\tau_1,\tau_2\in\Aut\Gamma$ are given by
$$\tau_1(g)=g_1^{-1} g g_1\qquad \tau_2(g)=g_2^{-1} g g_2.$$
So, when $\tau_1,\tau_2\in \Inn\Gamma$, the aforementioned action of $\Aut\Gamma\times\Aut\Gamma$ actually corresponds to an isomorphism between the two curves. Clearly, if we fix $\tau_2$ to be the identity, the resulting action of $\Inn\Gamma$ is faithful, which proves the theorem.
\end{proof}

\begin{remark}
Any specific choice of {\em maps}, $(\psi_1,\psi_2)$, determines exactly one of the above modular curves. We have specified only the kernels in the entanglement type in order to highlight the distinction and facilitate the counting of the modular curves. In addition, it is often more difficult in practice to nail down the maps than it is to specify the kernels.
\end{remark}

\subsection{$S_3$ Entanglement Modular Curves}\label{Section:S3Yoga}

The above construction can be made completely explicit in the case where $\Gamma=S_3$. Recall that the first step in the process is to determine the function field $L$ for the full product modular curve $X_{G(m_1),G(m_2)}$ by crossing the modular curves $X_{G(m_1)}$ and $X_{G(m_2)}$ over the $j$-line. Then, the $\psi$ maps on either side of the fiber product, $G(m_1)\times_{\psi} G(m_2)$, or more precisely their kernels, $N(m_1)$ and $N(m_2)$, will give rise to two $S_3$ extensions $L_1$ and $L_2$ of $L$. 
Without loss of generality, we may assume that these extensions are the splitting fields of two irreducible cubic polynomials over $L$, $x^3+Ax^2+Bx+C$ and $x^3+Dx^2+Ex+F$, whose roots in $\bar{L}$ are $\{s_1,s_2,s_3\}$ and $\{t_1,t_2,t_3\}$ (respectively). Identifying the Galois group of the compositum $L_1L_2$ over $L$ with $S_3\times S_3$, the function field for the entanglement modular curve will then be the subfield fixed by the diagonal subgroup. But this subfield is clearly generated over $L$ by the element $r:=s_1t_1+s_2t_2+s_3t_3$. Hence, an explicit equation for the $S_3$ entanglement modular curve, as an extension of $X_{G(m_1),G(m_2)}$, will be given by the minimal polynomial for $r$ over $L$.
The following lemma provides an explicit formula for that minimal polynomial.

\begin{lemma}\label{Lemma:DiagonalCubic}
Let $\{s_1,s_2,s_3\}$ and $\{t_1,t_2,t_3\}$ be the roots of the polynomials,
$x^3+Ax^2+Bx+C$ and $x^3+Dx^2+Ex+F$, respectively, in the compositum of the two splitting fields. In the same field, set
$$\delta=(s_1-s_2)(s_1-s_3)(s_2-s_3)(t_1-t_2)(t_1-t_3)(t_2-t_3)$$
and $r=s_1t_1+s_2t_2+s_3t_3$. (So, $\delta^2$ is the product of the two cubic discriminants.) Then $r$ is a root of the cubic, $x^3+Gx^2+Hx+I$, where
\begin{align*}
G&=-AD\\
H&=A^2E+D^2B-3BE\\
I&=-\tfrac{1}{2}\left(2CD^3+ABDE+2A^3F-9CDE-9ABF+27CF+\delta\right).
\end{align*}

\end{lemma}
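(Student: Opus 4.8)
The plan is to treat the statement as a purely algebraic identity among the six roots and to compute the elementary symmetric functions of the relevant triple directly. Write $e_1(s)=s_1+s_2+s_3=-A$, $e_2(s)=B$, $e_3(s)=-C$, and similarly $e_1(t)=-D$, $e_2(t)=E$, $e_3(t)=-F$. Set $r_1:=r=s_1t_1+s_2t_2+s_3t_3$ together with its two cyclic companions $r_2:=s_1t_2+s_2t_3+s_3t_1$ and $r_3:=s_1t_3+s_2t_1+s_3t_2$; these are exactly the images $r_\pi=\sum_j s_jt_{\pi(j)}$ of $r$ under the diagonal action of $\pi\in A_3\subseteq S_3$. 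Since $\delta$ is fixed by the diagonal while opposite-parity pairs negate it, $r_1,r_2,r_3$ are precisely the conjugates of $r$ over the quadratic extension obtained by adjoining $\delta$, which is why a cubic appears. In any case it suffices to prove the polynomial identity $\prod_{i=1}^3(x-r_i)=x^3+Gx^2+Hx+I$ (as $r=r_1$ is one of its roots), i.e. to match $G=-(r_1+r_2+r_3)$, $H=\sum_{i<j}r_ir_j$, and $I=-r_1r_2r_3$ against the stated expressions.

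The two lower coefficients are symmetric and fall out of an averaging observation: as $\pi$ ranges over $A_3$, each index $\pi(j)$ runs once through $\{1,2,3\}$, and for $j\ne k$ the unordered pair $\{\pi(j),\pi(k)\}$ runs once through all three $2$-subsets. Hence $\sum_{\pi\in A_3}t_{\pi(j)}=t_1+t_2+t_3$ and $\sum_{\pi\in A_3}t_{\pi(j)}t_{\pi(k)}=E$ for $j\ne k$. The first gives $r_1+r_2+r_3=(s_1+s_2+s_3)(t_1+t_2+t_3)=AD$, so $G=-AD$. For $H$ I would use $e_2=\tfrac12\bigl((\sum_i r_i)^2-\sum_i r_i^2\bigr)$ together with the second averaging identity to obtain $\sum_i r_i^2=(A^2-2B)(D^2-2E)+2BE$, which simplifies to $H=A^2E+D^2B-3BE$. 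Both answers are manifestly symmetric in the two blocks of roots, as they must be since the even and odd $A_3$-cosets yield the same $e_1$ and $e_2$.

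The coefficient $I=-r_1r_2r_3$ is the crux. Introduce the companion product $r_1'r_2'r_3'$ taken over the odd coset $S_3\setminus A_3$. A short check of the action $(\sigma,\tau):r_\pi\mapsto r_{\tau\pi\sigma^{-1}}$ shows that same-parity pairs permute each coset internally while opposite-parity pairs interchange the two triples. Consequently $\Sigma:=r_1r_2r_3+r_1'r_2'r_3'$ is fully symmetric, while $\Delta:=r_1r_2r_3-r_1'r_2'r_3'$ transforms by the character $(\sigma,\tau)\mapsto\sgn(\sigma)\sgn(\tau)$. Being alternating in the $s_i$ and in the $t_j$ and of degree $3$ in each block, $\Delta$ is forced by divisibility by the two Vandermonde factors (and a degree count) to equal $c\,\delta$ for a constant $c$, so that $r_1r_2r_3=\tfrac12(\Sigma+c\,\delta)$.

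It remains to expand $\Sigma$ in the generators $A,\dots,F$ and to pin down $c$. For the former I would rewrite $\Sigma$ via power sums in each block and apply the fundamental theorem of symmetric polynomials, arriving at $\Sigma=2CD^3+ABDE+2A^3F-9CDE-9ABF+27CF$; this I would confirm on degenerate specializations such as $t_1=t_2=t_3$, where $\delta=0$ and both sides collapse to $-2A^3t^3$. For the constant, $c\in\{\pm1\}$ and its sign is dictated by the ordering conventions baked into $\delta$ and into $r=r_1$, so I would fix it by one numerical specialization; for instance $(s_1,s_2,s_3)=(0,1,2)$ and $(t_1,t_2,t_3)=(0,1,3)$ give $\Delta=42-30=12=\delta$, whence $c=1$. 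Combining, $I=-\tfrac12(\Sigma+\delta)$, as asserted. The main obstacle is purely the bookkeeping in the bisymmetric expansion of $\Sigma$, which is lengthy and sign-sensitive; having reduced the conceptual content to the symmetric/alternating splitting above, this final identity is most safely certified with the computer algebra already in use (SageMath).
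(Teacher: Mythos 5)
Your proof is correct and takes essentially the same route as the paper's: both identify $G$, $H$, $I$ as the elementary symmetric functions of the three $A_3$-companions $r_1 = r$, $r_2=s_1t_2+s_2t_3+s_3t_1$, $r_3=s_1t_3+s_2t_1+s_3t_2$ and verify the resulting symmetric-function identities, which the paper compresses into a table and the phrase ``easily verified.'' Your added scaffolding --- the averaging identities for $G$ and $H$, the decomposition $r_1r_2r_3=\tfrac{1}{2}(\Sigma+\Delta)$ with Vandermonde divisibility and a degree count forcing $\Delta=c\delta$, and the specialization $(0,1,2),(0,1,3)$ fixing $c=1$ --- is a correct and more structured account of exactly the bookkeeping the paper leaves implicit, and your numerical and degenerate checks all hold.
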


\begin{proof}
This is easily verified by interpreting the coefficients as symmetric \mbox{functions} in the roots. 
$$
\begin{tabular}{ c | c  |c }
$A$ & $B$ & $C$\\ \hline
$-s_1-s_2-s_3$ & $s_1s_2+s_1s_3+s_2s_3$ & $-s_1s_2s_3$\\ 
& & \\ 
$D$ & $E$ & $F$\\ \hline
$-t_1-t_2-t_3$ & $t_1t_2+t_1t_3+t_2t_3$ & $-t_1t_2t_3$\\
& & \\
$G$ & $H$ & $I$\\ \hline
$-r_1-r_2-r_3$ & $r_1r_2+r_1r_3+r_2r_3$ & $-r_1r_2r_3$\\
\end{tabular}
$$
Take $r_1=r$, $r_2=s_1t_2+s_2t_3+s_3t_1$ and $r_3=s_1t_3+s_2t_1+s_3t_2$.
\end{proof}

\begin{remark}
It is irrelevant how we identify with $S_3$ on each side, i.e., which ``diagonal quotient'' we choose. Once the kernels of $\psi_1$ and $\psi_2$ are specified, there is only one entanglement modular curve up to isomorphism by Theorem \ref{Thm:HowManyCurves}, since  $[\Aut\Gamma:\Inn\Gamma]=1$ when $\Gamma=S_3$.
\end{remark}

\subsection{Level $10$}\label{Section:Level10}
Let $G(m_1)=\GL_2(\Z/2\Z)$, and let $G(m_2)=\XG\subseteq \GL_2(\Z/5\Z)$ be the unique index $5$ subgroup containing $\mc{N}_s(5)$ (the normalizer of split Cartan) as an index $3$ subgroup.  The group $\PGL_2(\mbz/5\mbz)$ contains as a subgroup an isomorphic copy of $S_4$, and $\XG$ may also be described as the full pre-image of that copy of $S_4$ under the canonical projection $\GL_2(\mbz/5\mbz) \twoheadrightarrow \PGL_2(\mbz/5\mbz)$ (it is often referred to as an \emph{exceptional} subgroup).
We may fix an isomorphism $\psi_1:G(m_1)\xrightarrow{\sim} S_3$ and a surjection $\psi_2:G(m_2)\twoheadrightarrow S_3$ whose kernel is contained in $\mc{N}_s(5)$ with index $2$.
Moreover, $G:=\pi_{\GL_2}^{-1}\left(G(m_1)\times_{\psi} G(m_2)\right)$ is conjugate to the non-abelian entanglement group $G_{10}$ in Theorem \ref{mainthm}. In this section we determine a parameter on the genus $0$ modular curve $\cX:=X_{G}$, as well as an explicit formula for the map from $\cX$ down to the $j$-line.


Closely following the general yoga of Section \ref{Section:GeneralYoga}, our first step is to find an explicit model for the full product modular curve.
However, since $G(m_1)$ is ``full,'' i.e., all of $\GL_2(\Z/2\Z)$, this is simply the curve $X=X_{G(m_2)}$. From \cite{zywina2} we know that $X_{G(m_2)}$ is a genus $0$ curve with parameter $t$, such that the map to the $j$-line is as follows.\footnote{Our group $G(m_2)$ is referred to as $G_9$ in \cite{zywina2}.}
\begin{equation}
j(t)=t^3(t^2+5t+40).\label{Eq:XG9}
\end{equation}

The next step is to define a universal family of elliptic curves $E_t$ over $K(t)$, and then find two cubic polynomials over $K(t)$ that generically generate the $S_3$ subextensions of $K(E_t[2])$ and $K(E_t[5])$, respectively. 
A convenient family can be found by substituting $j(t)$ into the following universal family over the $j$-line.
\begin{equation}\label{Eq:EllipticCurveOverJ}
y^2=x^3+\tfrac{1}{4}x^2-\tfrac{36}{j-1728}x-\tfrac{1}{j-1728}
\end{equation}
After a linear change of variables over $\Q$, we arrive at the family $E_t$ given by $y^2=x^3+B(t)x+C(t)$, where $B(t)$ and $C(t)$ are as follows.
\begin{equation}\label{Eq:EllipticCurveLevel10}
B(t)=-3(t - 3) t(t^2 + 5t + 40)\qquad C(t)=2 (t - 3)^2 (t^2 + 4t + 24)(t^2 + 5t + 40)
\end{equation}
The cubic polynomial that generically generates the $S_3$ subextension of $K(E_t[2])$, i.e., the full $2$-torsion field of $E_t$, is simply the Weierstrass polynomial.
In the next lemma, we determine a cubic polynomial over $\Q(t)$ that generically generates the $S_3$ subextension of $K(E_t[5])$.
Thus we are in position to apply Lemma \ref{Lemma:DiagonalCubic} to determine first a singular equation for the genus $0$ entanglement curve, $\cX$, and then a parameter over $\Q$.

\begin{lemma}\label{Lemma:S3Extension-Exceptional5}
The $S_3$ subextension of $K(E_t[5])/K$ is (generically) generated by the roots of the cubic polynomial, $x^3+E(t)x+F(t)$, where
\begin{align*}
E(t) &= -3\left(t^2 + 5t + 40\right)\\
F(t) &= -2\left(t + \tfrac{5}{2}\right)\left(t^2 + 5t + 40\right).
\end{align*}
\end{lemma}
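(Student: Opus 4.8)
The plan is to identify the claimed cubic group-theoretically as the minimal polynomial of a generator of the fixed field of $\mc{N}_s(5)$, and then to extract that generator concretely from the $5$-division polynomial of $E_t$. First I would pin down, via the Galois correspondence, exactly which subfield of $K(E_t[5])$ the cubic is meant to generate. Generically $\Gal(K(E_t[5])/K(t)) \cong G(m_2) = \XG$, and $\psi_2$ realizes the quotient $G(m_2)/N(m_2) \cong S_3$ with $N(m_2)$ of index $2$ in $\mc{N}_s(5)$, which is itself of index $3$ in $G(m_2)$. Thus $\mc{N}_s(5)/N(m_2)$ is an order-$2$ subgroup of $S_3$ (a transposition), and the fixed field of $\mc{N}_s(5)$ is a non-Galois cubic subextension of the $S_3$-extension $K(E_t[5])^{N(m_2)}$ — i.e. exactly the function field of the split-Cartan-normalizer curve $X_{\mc{N}_s(5)} = X_s^+(5)$, viewed as a degree-$3$ extension of the function field $K(t)$ of $X_{G(m_2)}$. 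Consequently a single root of the sought cubic is a generator of this cubic subfield and its three roots are the $S_3$-conjugates, so the lemma reduces to producing one generator explicitly and computing its minimal polynomial.

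Next I would construct such a generator from the $5$-torsion. A splitting $E_t[5] = L_1 \oplus L_2$ into two cyclic subgroups is precisely the data diagonalized by a split Cartan, and $\mc{N}_s(5)$ is exactly the stabilizer of the \emph{unordered} pair $\{L_1,L_2\}$; it permutes the four distinct $x$-coordinates of the nonzero points lying on $L_1 \cup L_2$ and fixes any symmetric function of them. I would therefore take such a symmetric function $s_1$ (for instance a suitably normalized sum of those four $x$-coordinates), whose three $G(m_2)$-conjugates $s_1,s_2,s_3$ — one for each of the three decompositions compatible with the $S_4$-structure on $E_t[5]$ — are the roots of the desired cubic. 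In practice this means factoring the degree-$12$ five-division polynomial $\psi_5(E_t;x)$ according to the six cyclic subgroups of $E_t[5]$, isolating the degree-$3$ resolvent attached to the pairing of subgroups dictated by $\psi_2$, and then reading off the elementary symmetric functions of $s_1,s_2,s_3$ to check that they yield $x^3 + E(t)x + F(t)$.

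The hard part will be correctly identifying this degree-$3$ resolvent among the many symmetric constructions available at level $5$: the six cyclic subgroups correspond to the six points of $\mbp^1(\mbf_5)$ permuted by the exotic action of $\PGL_2(\mbf_5) \cong S_5$, and one must track how the embedding $S_4 \hookrightarrow S_5$ cut out by $\XG$ induces the quotient $S_4 \twoheadrightarrow S_3$ (the action on the three $2{+}2$ partitions) so as to select the correct pairing of subgroups. A reliable way to sidestep this combinatorial bookkeeping is to match models directly: since \cite{zywina2} gives genus-$0$ parameters and $j$-maps for both $X_{G(m_2)} = X_{G_9}$ (namely \eqref{Eq:XG9}) and $X_{\mc{N}_s(5)}$, one can compute the degree-$3$ covering map between them by eliminating $j$, obtain the cubic relation between the two parameters, and bring it to depressed form.

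Either route admits a clean final verification from the discriminant of $x^3 + E(t)x + F(t)$, which works out to $3^6 \cdot 5 \cdot (t^2 + 5t + 40)^2$. Its square class is that of $\sqrt{5}$, confirming simultaneously that the Galois group is the full $S_3$ rather than $A_3$ (the discriminant is a non-square) and that the quadratic resolvent is $\mbq(\sqrt{5})$ — exactly the quadratic subfield forced inside $\mbq(E_t[5])$ by the Weil pairing, and consistent with the exceptional $S_4$-structure at $5$. This pins down that the cubic generates the correct $S_3$-subextension and not merely some abstractly isomorphic one.
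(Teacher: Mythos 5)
Your proposal is correct and, in the route you actually commit to, is essentially the paper's own proof: the paper likewise observes that $\ker\psi_2 \subseteq \mc{N}_s(5) \subseteq G(m_2)$ with indices $2$ and $3$, so the $S_3$-extension is the normal closure of the function field of $X_s^+(5)$ over $X_{G(m_2)}$, and then obtains the cubic as the degree-$3$ irreducible component of the fiber product $X_s^+(5)\times_{X(1)}X_{G(m_2)}$ by setting $j(s)=j(t)$ with Zywina's genus-$0$ models, factoring, and depressing via $x = 3s - t + 5$. Your closing discriminant check (square class of $3^6\cdot 5\,(t^2+5t+40)^2$ matching $\mbq(\sqrt{5})\subseteq\mbq(\mu_5)$) is a sound extra confirmation that the paper only exploits implicitly in the later level-$15$ computation.
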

\begin{proof}
Recall that $\ker\psi_2\subseteq \mc{N}_s(5)\subseteq G(m_2)$, with indices of $2$ and $3$, respectively. 
Therefore, the $S_3$ subextension of $K(E_t[5])$ that is determined by $\ker\psi_2$ must be generated (generically) by the natural extension from $X_{G(m_2)}$ up to $X_s^+(5)=X_{\mc{N}_s(5)}$.
More precisely and in the language of Section \ref{Section:GeneralYoga}, the function field of the modular curve, $Y_{S_3,2}$, in this case, is just the normal closure of the function field of $X_s^+(5)$ in the function field of $X(5)$, once $X_s^+(5)$ is viewed as a degree $3$ extension of $X_{G(m_2)}$.

So, essentially, we just need to find an explicit equation for the natural projection from $X_s^+(5)$ to $X_{G(m_2)}$.
One way to do this is to think of the desired extension as an irreducible component of $X_s^+(5)\times_{X(1)}X_{G(m_2)}$ that lies over $X_{G(m_2)}$ with degree $3$ by the canonical map (projection onto the second factor). The $j$-map for the genus $0$ curve, $X_s^+(5)$, is also given in \cite{zywina2} and copied below for convenience.
\begin{equation}
j(s)=\frac{(s+5)^3\left(s^2-5\right)^3\left(s^2+5s+10\right)^3}{\left(s^2+5s+5\right)^5}\label{Eq:XNs5}
\end{equation}
Setting $j(s)=j(t)$ to compute the fiber product, we then factor to find two irreducible components, which lie over $X_{G(m_2)}$ with degrees $3$ and $12$. 
The former is given by
$$s^3 + (-t + 5)s^2 + (-5t - 5)s - 5t - 25=0.$$
The substitution, $x=3s-t+5$, yields the polynomial that is given in the statement of the lemma.
\end{proof}

\begin{Theorem}\label{Thm:Entanglement-Exceptional5}
Let $\cX$ be the modular curve of level $10$ whose $K$-rational points correspond generically to elliptic curves $E/K$ satisfying:\\ 
\\
\indent (1) $\Gal(K(E[2])/K)\cong\GL_2(\Z/2\Z)$\\
\indent (2) $\Gal(K(E[5])/K)\cong \XG$ (from above)\\
\indent (3) $\Gal(K(E[5])\cap K(E[2])/K)\cong S_3$.\\
\\
Then $\cX$ is a genus $0$ curve with a parameter $u$ (over $\Q$) that lies over $X_{G(m_2)}$ via the map given below.
\begin{equation*}\label{Eq:Entanglement-Exceptional5-MapDown}
t=\frac{3u^6 + 12u^5 + 80u^4 + 50u^3 - 20u^2 - 8u + 8}{(u - 1)^2\left(u^2 + 3u + 1\right)^2}
\end{equation*}
\end{Theorem}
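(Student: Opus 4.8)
The plan is to run the general construction of Section \ref{Section:GeneralYoga} in the case $\Gamma\cong S_3$, made explicit through the gluing formula of Lemma \ref{Lemma:DiagonalCubic}. All the ingredients are already assembled: the full-product curve is $X=X_{G(m_2)}\cong\mbp^1_t$ with $j$-map \eqref{Eq:XG9}, the universal family $E_t:y^2=x^3+B(t)x+C(t)$ is \eqref{Eq:EllipticCurveLevel10}, the $S_3$-subextension of $K(E_t[2])$ is cut out by the Weierstrass cubic itself (so in the notation of Lemma \ref{Lemma:DiagonalCubic} one has $A=0$ and $(B,C)=(B(t),C(t))$), and the $S_3$-subextension of $K(E_t[5])$ is cut out by $x^3+E(t)x+F(t)$ with $D=0$ and $(E,F)$ as in Lemma \ref{Lemma:S3Extension-Exceptional5}. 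It then remains to glue these two cubics diagonally and parametrize the resulting genus $0$ curve.

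First I would compute the two cubic discriminants $-4B^3-27C^2$ and $-4E^3-27F^2$. Writing $q(t):=t^2+5t+40$, the first collapses because of the identity $t^3q(t)-(t-3)(t^2+4t+24)^2=1728$, and one finds
\[
\mathrm{disc}(x^3+Bx+C)=2^8\,3^6\,q(t)^2(t-3)^3,\qquad \mathrm{disc}(x^3+Ex+F)=3^6\cdot 5\cdot q(t)^2,
\]
so that $\delta^2$, the product of the two discriminants, equals $2^8\,3^{12}\,5\,q(t)^4(t-3)^3$. Its square-free part is $5(t-3)$, whence the quadratic resolvent field shared by the two $S_3$-extensions is $K(t)\bigl(\sqrt{5(t-3)}\bigr)$. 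The decisive observation is that this quadratic layer is \emph{itself rational}: putting $w^2=5(t-3)$, i.e.\ $t=3+w^2/5$, gives a rational parameter $w$ on the intermediate curve, and $\delta=2^4\,3^6\,q(t)^2(t-3)\,w$ becomes a rational function of $w$.

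Next I would feed $A=D=0$ into Lemma \ref{Lemma:DiagonalCubic}, obtaining the diagonal cubic $x^3+Hx+I$ with $G=0$, $H=-3B(t)E(t)$ and $I=-\tfrac12\bigl(27\,C(t)F(t)+\delta\bigr)$. This exhibits $\cX$ as a degree $3$ cover of $\mbp^1_w$, hence a degree $6$ cover of $X\cong\mbp^1_t$, consistent with the Cummins--Pauli datum $(d,c_2,c_3,c_\infty)=(30,0,6,3)$ of Table \ref{Table:CPData} (since $X\to X(1)$ has degree $5$). The genus is supplied by the label $10E^0$, so $\cX$ is rational as soon as a $\Q$-point is found; carrying out the parametrization in SageMath \cite{sage} produces a uniformizer $u$ together with the claimed formula for $t(u)$. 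I would then verify the output intrinsically: substituting the displayed $t(u)$ yields $5\bigl(t(u)-3\bigr)=\bigl(5(4u^2+2u-1)/((u-1)(u^2+3u+1))\bigr)^2$, a perfect square in $\Q(u)$, so the quadratic layer $w$ is rational over $\Q(u)$, and the diagonal cubic acquires a root in $\Q(u)$, confirming that $u$ is a genuine uniformizer for the whole degree $6$ cover.

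The genuinely non-mechanical step is this last one: producing the explicit birational isomorphism $\cX\cong\mbp^1_u$. Assembling the discriminants and the diagonal cubic is routine, and the genus is read off from the tables, but locating a $\Q$-rational point on the cubic cover of $\mbp^1_w$ (equivalently, confirming $\cX(\Q)\neq\emptyset$) and then extracting the specific degree $6$ rational function $t(u)$ is where the real computation lies. A secondary subtlety is the choice of sign of $\delta$ (equivalently, the identifications $\sigma_i$ of Theorem \ref{Thm:GeneralYoga1}); by the remark following Lemma \ref{Lemma:DiagonalCubic} together with $[\Aut S_3:\Inn S_3]=1$ from Theorem \ref{Thm:HowManyCurves}, either square root yields the same modular curve, so this ambiguity does not affect the isomorphism class of $\cX$.
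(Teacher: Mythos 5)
Your proposal is correct and takes essentially the same route as the paper's proof: it applies Lemma \ref{Lemma:DiagonalCubic} to the Weierstrass cubic of $E_t$ and the cubic of Lemma \ref{Lemma:S3Extension-Exceptional5}, obtains the same $\delta^2 = 2^8\cdot 3^{12}\cdot 5\,(t-3)^3(t^2+5t+40)^4$, rationalizes the quadratic layer via $y^2=5(t-3)$ (your $w$), parametrizes the resulting genus-$0$ diagonal cubic by computer algebra, and justifies birationality by the same degree-$3$ count that the paper uses. The only cosmetic differences are your added cross-check against the Cummins--Pauli data $(30,0,6,3)$ and the paper's simplifying substitution $x = 3\cdot 5^{-3}\,y\left(y^2-5y+40\right)x_0$ before exhibiting the explicit root $x_0(u)$, which you delegate to SageMath.
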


\begin{proof}
When we begin to apply the construction of Lemma \ref{Lemma:DiagonalCubic} to the cubic polynomial from the previous lemma and the Weierstrass polynomial of $E_t$, we find that 
$$\delta^2=2^8\cdot 3^{12}\cdot 5(t - 3)^3\left(t^2 + 5t + 40\right)^4.$$
For simplicity, we make the substitution, $\delta=2^4\cdot 3^6(t-3)\left(t^2+5t+40\right)^2y$. Then $y$ is a parameter on the genus $0$ modular curve (lying over $X_{G(m_2)}$) whose $K$-rational points correspond generically to elliptic curves $E/K$ for which  $E[2]$ and $E[5]$ have the desired quadratic entanglement. The map from this curve to $X_{G(m_2)}$ is given by $y^2=5(t-3)$.

Continuing on with Lemma \ref{Lemma:DiagonalCubic}, we then compute the coefficients of the cubic equation, $x^3+Gx^2+Hx+I=0$, over $\Q(t,\delta)=\Q(y)$, which describes the full $S_3$ entanglement modular curve. After making the simplifying substitution, $x=3\cdot 5^{-3}y\left(y^2-5y+40\right)x_0$, we arrive at the equation, $x_0^3+H_0x_0+I_0=0$, where
\begin{align*}
H_0(y)&=-3\left(y^2 + 15\right)\left(y^2 + 5y + 40\right)^2\\
I_0(y)&=\left(y^2 + 5y + 40\right)^2\left(2y^5 + 10y^4 + 125y^3 + 225y^2 + 1125y - 3375\right).
\end{align*}
It is easy to check that the equations given below define a map from $\mbp^1$ (with parameter $u$) to this singular curve.
\begin{align*}
y&=\frac{-5\left(4u^2 + 2u - 1\right)}{(u - 1)\left(u^2 + 3u + 1\right)}\\
x_0&=\frac{25\left(2u^2 + u + 2\right)^2\left(3u^5 + 10u^4 + 25u^3 + 10u^2 + 2\right)}{(u - 1)^3\left(u^2 + 3u + 1\right)^3}
\end{align*}
The map must be a birational isomorphism, as $y$ defines a degree $3$ function on both curves. Composing with $t=\tfrac{1}{5}y^2+3$ yields the formula for $t$ in terms of $u$ that is given in the statement of the theorem.
\end{proof}

\begin{Example}
If we substitute $u=0$ into Theorem \ref{Thm:Entanglement-Exceptional5}, we arrive at $j=73728$ and the following elliptic curve.
$$E: y^2=x^3 - 120x + 500$$
Let $p(x)$ be the $5$-torsion polynomial of $E$, which has degree $12$. Then $p(x)$ is irreducible over $\Q$, and its splitting field is a degree $48$ extension. Adjoining the corresponding $y$ coordinate for any particular root of $p(x)$ generates a further quadratic extension. Since $\Gal(\Q(E[5])/\Q)\subseteq \XG$ (up to conjugation), and the order of $\XG$ is $96$, this confirms that $\Gal(\Q(E[5])/\Q)\cong \XG$. However, over $\Q(\alpha)$ for any root $\alpha$ of the Weierstrass polynomial, $p(x)$ factors into the product of a degree $4$ polynomial and a degree $8$ polynomial.  Hence, we must have $\Q(\alpha)\subseteq \Q(E[5])$. But $\Q(E[2])$ is just the Galois closure of $\Q(\alpha)$. Therefore, since the intersection of Galois extensions must be Galois, it follows that $\Q(E[2])\subseteq\Q(E[5])$, i.e, $\Gal(\Q(E[2])\cap \Q(E[5])/\Q)\cong S_3$.
\end{Example}
\subsection{Level $15$}\label{Section:Level15}

Let $G(m_1)=\GL_2(\Z/3\Z)$, and let $G(m_2)=\XG\subseteq\GL_2(\Z/5\Z)$ be the same subgroup as in Section \ref{Section:Level10}.
Each $G(m_i)$ surjects via some $\psi_i$ onto $S_3$, so that $G:=\pi_{\GL_2}^{-1}\left(G(m_1)\times_{\psi}G(m_2)\right)$ is conjugate to the group $G_{15}$ in Theorem \ref{mainthm}. In this section we derive an explicit equation for the modular curve $\cX:=X_G$ that corresponds to this scenario.
All of the essential information on the $5$-side carries over directly from the previous section.
In particular, there is a genus $0$ modular curve, $X_{G(m_2)}$, whose $K$-rational points correspond generically to elliptic curves $E/K$ for which $\Gal(K(E[5])/K)\cong G(m_2)$. 
The map from $X_{G(m_2)}$ (with parameter $t$) to the $j$-line is given in \eqref{Eq:XG9}, and we have a universal family of elliptic curves $E_t$ over $K(t)$, which is described by \eqref{Eq:EllipticCurveLevel10}.
In the language of Section \ref{Section:GeneralYoga}, we may once again view $X=X_{G(m_2)}$ as the full product curve, since $G(m_1)=\GL_2(\Z/3\Z)$.

The cubic polynomial over $K(t)$ that generically generates the $S_3$ subextension of $K(E_t[5])$ was derived in Lemma \ref{Lemma:S3Extension-Exceptional5}. 
On the other hand, we have the classical result that for elliptic curves $E/K$ with $\Gal(K(E[3])/K)\cong \GL_2(\Z/3\Z)$, the $S_3$ subextension of $K(E[3])$ is the splitting field of $x^3-j$. Therefore, in order to find an explicit model for the entanglement modular curve in this case, we work over $K(t)$ and apply Lemma \ref{Lemma:DiagonalCubic}, using the cubic polynomial from Lemma \ref{Lemma:S3Extension-Exceptional5} and the cubic polynomial $x^3-j(t)$ (where $j(t)$ is as given in \eqref{Eq:XG9}).
Note that we already know, a priori, when the {\em quadratic} subfields of $K(E_t[3])$ and $K(E_t[5])$ coincide.
The two quadratic subfields are $K(\sqrt{-3})$ and $K(\sqrt{5})$, respectively.
Hence, they will coincide if and only if $\sqrt{-15}\in K$.

\begin{Theorem}\label{Thm:Entanglement-Level15}
Let $\cX$ be the modular curve of level $15$ whose $K$-rational points correspond generically to elliptic curves $E/K$ satisfying:\\ 
\\
\indent (1) $\Gal(K(E[3])/K)\cong\GL_2(\Z/3\Z)$\\
\indent (2) $\Gal(K(E[5])/K)\cong \XG$ (from above)\\
\indent (3) $\Gal(K(E[5])\cap K(E[3])/K)\cong S_3$.\\
\\
Then $\cX$ is a genus $0$ curve with a parameter $u$ over $K(\sqrt{-15})$ such that the map to $X_{G(m_2)}$ is as given below.
\begin{equation*}
t=u^3-\tfrac{5-3\sqrt{-15}}{2}
\end{equation*}
\end{Theorem}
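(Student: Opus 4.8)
The plan is to follow the level $10$ computation of Section~\ref{Section:Level10} almost verbatim, the only genuinely new ingredient being the cubic on the $3$-side. All of the $5$-side data carries over: $X=X_{G(m_2)}$ is the genus $0$ curve with parameter $t$ and $j$-map \eqref{Eq:XG9}, the universal family $E_t$ is given by \eqref{Eq:EllipticCurveLevel10}, and the $S_3$-subextension of $K(E_t[5])$ is generated by $x^3+E(t)x+F(t)$ from Lemma~\ref{Lemma:S3Extension-Exceptional5}. On the $3$-side I use the classical cubic $x^3-j(t)$, with $j(t)=t^3(t^2+5t+40)$ as in \eqref{Eq:XG9}. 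The first step is to feed these two cubics into Lemma~\ref{Lemma:DiagonalCubic} to obtain an equation for $\cX$ over $K(\sqrt{-15})(t)$.

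Because both cubics are depressed ($A=D=0$) and the $3$-cubic moreover has vanishing linear coefficient ($B=0$), the formulas of Lemma~\ref{Lemma:DiagonalCubic} collapse to $G=H=0$ and $I=-\tfrac12\left(27CF+\delta\right)$ with $C=-j(t)$, so that $\cX$ is the \emph{pure} cubic $x^3=-I$. To evaluate $\delta$ I use that $\delta^2$ is the product of the two cubic discriminants; a short computation gives $\operatorname{disc}(x^3-j)=-27j^2$ and $\operatorname{disc}(x^3+Ex+F)=3^6\cdot 5\,(t^2+5t+40)^2$, whence
\[
\delta^2=-3^9\cdot 5\;t^6(t^2+5t+40)^4,\qquad \delta=\pm\,3^4\sqrt{-15}\;t^3(t^2+5t+40)^2 .
\]
This is exactly where $\sqrt{-15}$ is forced into the base field: it explains why $\cX$ is only defined over $K(\sqrt{-15})$ and recovers the a priori observation that the two quadratic subfields $K(\sqrt{-3})$ and $K(\sqrt5)$ coincide precisely when $\sqrt{-15}\in K$. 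Taking the sign that yields the factorization below and substituting back gives
\[
-I=\tfrac{27}{2}\,t^3(t^2+5t+40)^2\bigl(2t+5+3\sqrt{-15}\bigr).
\]

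The step I expect to require the most care is the genus computation, because $x^3=-I$ has a degree $5$ branch divisor over the $t$-line and appears to be genus $2$ so long as one regards $t^2+5t+40$ as irreducible. The resolution is that over $K(\sqrt{-15})$ this quadratic splits and its two roots are precisely the linear factors entering through $\delta$. Writing $\tau_1=\tfrac{-5+3\sqrt{-15}}{2}$ and $\tau_2=\tfrac{-5-3\sqrt{-15}}{2}$, so that $t^2+5t+40=(t-\tau_1)(t-\tau_2)$ and $2t+5+3\sqrt{-15}=2(t-\tau_2)$, the expression for $-I$ becomes
\[
-I=27\,t^3(t-\tau_1)^2(t-\tau_2)^3 ,
\]
a perfect cube times a square. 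Hence, with $\rho=x/\bigl(3t(t-\tau_2)\bigr)$, the curve reduces to $\rho^3=(t-\tau_1)^2$, which is rational; setting $u=(t-\tau_1)/\rho$ gives $u^3=t-\tau_1$, i.e. $t=u^3-\tfrac{5-3\sqrt{-15}}{2}$. The rigorous endgame is to reverse this: substitute $t=u^3-\tfrac{5-3\sqrt{-15}}{2}$ back into $-I$ and confirm that it equals the cube $\bigl(3t\,u^2(u^3+3\sqrt{-15})\bigr)^3$, which simultaneously proves $\genus(\cX)=0$ and identifies the forgetful map $\cX\to X_{G(m_2)}$.

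The only remaining freedom is the sign of $\delta$, and it is harmless: both signs produce genus $0$ curves (the conjugate sign gives $2t+5-3\sqrt{-15}=2(t-\tau_1)$ and hence $t=u^3-\tfrac{5+3\sqrt{-15}}{2}$), and the two are interchanged by the Galois conjugation $\sqrt{-15}\mapsto-\sqrt{-15}$ over $K$, which is simply the ambiguity in the choice of $\sqrt{-15}$; fixing the convention recovers the stated map. Thus the main obstacle is conceptual rather than computational: one must notice that the ``discriminant'' quadratic $t^2+5t+40$ factors over $K(\sqrt{-15})$ and cancels against the entanglement linear factor, collapsing an apparently genus $2$ pure cubic into a rational curve. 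Once this is seen, the attendant algebra — the explicit $\delta$, the factorization of $-I$, and the verification of the parametrization — is routine and is carried out in SageMath, exactly as for the level $10$ curve.
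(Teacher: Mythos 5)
Your proposal is correct and follows essentially the same route as the paper's proof: the same discriminant computation $\delta^2=-3^9\cdot 5\,t^6(t^2+5t+40)^4$ forcing $\sqrt{-15}$ into the base field, the same collapse of Lemma~\ref{Lemma:DiagonalCubic} to the pure cubic $x^3=27\,t^3\left(t+\tfrac{5-3\sqrt{-15}}{2}\right)^2\left(t+\tfrac{5+3\sqrt{-15}}{2}\right)^3$, and a parametrization equivalent to the paper's substitution $x=3t\left(t+\tfrac{5-3\sqrt{-15}}{2}\right)\left(t+\tfrac{5+3\sqrt{-15}}{2}\right)u^{-1}$. Your extra remarks on the sign of $\delta$ (the two choices being exchanged by $\sqrt{-15}\mapsto-\sqrt{-15}$, consistent with Theorem~\ref{Thm:HowManyCurves} since $[\Aut S_3:\Inn S_3]=1$) and the verification $-I=\bigl(3t\,u^2(u^3+3\sqrt{-15})\bigr)^3$ are sound and merely make explicit what the paper leaves implicit.
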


\begin{proof}
We begin by computing the discriminants of the two cubic polynomials over $K(t)$.
$$\Delta_1=-3^3\cdot t^6(t^2 + 5t + 40)^2\qquad \Delta_2=3^6\cdot 5\cdot(t^2 + 5t + 40)^2$$
Then the first step in applying Lemma \ref{Lemma:DiagonalCubic} is to adjoin $\delta$ to $\Q(t)$, where
$$\delta^2=\Delta_1\Delta_2=-3^9\cdot 5\cdot t^6(t^2 + 5t + 40)^4.$$
This clearly implies, as was noted above, that $\Q(t,\delta)=\Q(t,\sqrt{-15})$, and so we may continue by taking $\delta=3^4\sqrt{-15}\cdot t^3(t^2 + 5t + 40)^2$.
Applying Lemma \ref{Lemma:DiagonalCubic}, we arrive at the model,
$$x^3-3^3\cdot t^3\left(t +\tfrac{5-3\sqrt{-15}}{2}\right)^2 \left(t + \tfrac{5+3\sqrt{-15}}{2}\right)^3=0.$$
The model given in the statement of the theorem can be obtained by letting
$$x=3\cdot t\left(t +\tfrac{5-3\sqrt{-15}}{2}\right)\left(t + \tfrac{5+3\sqrt{-15}}{2}\right)u^{-1}.$$
\end{proof}

\subsection{Level $18$}

Let $G(m_1)=\GL_2(\Z/2\Z)$ and let $G(m_2)$ be the full pre-image of the Borel group $\tB(3)$ under the canonical projection from $\GL_2(\Z/9\Z)$ onto $\GL_2(\Z/3\Z)$. Then $G(m_2)$ has an index $6$ normal subgroup $N(m_2)$ consisting of all upper triangular invertible matrices for which the diagonal entries are congruent mod $3$. Moreover, the quotient is isomorphic to $S_3$. Fixing a surjection $\psi_2:G(m_2)\twoheadrightarrow S_3$ with $\ker\psi_2=N(m_2)$, and an isomorphism $\psi_1:G(m_1)\xrightarrow{\sim} S_3$, we arrive at a group $G:=\pi_{\GL_2}^{-1}\left(G(m_1)\times_{\psi} G(m_2)\right)$ which is conjugate to the group $G_{18}$ in the statement of Theorem \ref{mainthm}. In this section we derive an explicit equation for the corresponding genus $0$ modular curve $\cX:=X_G$.

Following the yoga, we want to build $\cX$ as an extension of the full product curve $X:=X_{G(m_1),G(m_2)}$, but this is again just $X_{G(m_2)}$ since $G(m_1)=\GL_2(\Z/2\Z)$. Hence, $X$ is canonically isomorphic to the well-known modular curve $X_0(3)$, for which we may choose the following parameter and map to the $j$-line.
$$t=\left(\frac{\eta_1}{\eta_3}\right)^{12}\qquad j(t)=\frac{(t+27)(t+243)^3}{t^3}$$
Substituting $j(t)$ into \eqref{Eq:EllipticCurveOverJ} as before, and making a linear change of variables, we arrive at the family $E_t$ of elliptic curves over $X$ given by $y^2=x^3+B(t)x+C(t)$, where
$$B(t) = -3(t + 27)(t + 243)\qquad C(t) = 2(t + 27)(t^2 - 486t - 19683).$$

Our next step is to determine the two cubic polynomials with coefficients in $\Q(t)$ that generically generate the corresponding $S_3$ subextensions of $K(E_t[2])$ and $K(E_t[9])$ over $K$, respectively, for a given $K$-rational point of $X$.
The first is simply the Weierstrass polynomial, while the second is addressed in the following lemma.

\begin{lemma}\label{Lemma:S3Extension-Level18}
The $S_3$ subextension of $K(E_t[9])/K$ which is fixed by $N(m_2)$ (as above) is generically generated by the roots of the cubic polynomial, $x^3+E(t)x+F(t)$, where
\begin{align*}
E(t) &= -3 t (t + 27)\\
F(t) &= -t (2t + 27)(t + 27).
\end{align*}
\end{lemma}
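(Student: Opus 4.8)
The plan is to reduce the statement to an explicit computation on the classical modular curve $X_0(9)$, after first unwinding the group theory of the pair $N(m_2) \subseteq G(m_2)$.

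First I would make the isomorphism $G(m_2)/N(m_2) \cong S_3$ completely explicit. Setting $u = \begin{pmatrix} 1 & 0 \\ 3 & 1\end{pmatrix}$ and $\tau = \begin{pmatrix} 1 & 0 \\ 0 & 2\end{pmatrix}$ in $G(m_2) \subseteq \GL_2(\Z/9\Z)$, one checks that $u^3 = I$, that $\tau^2 \in N(m_2)$, and that $\tau u \tau^{-1} = u^{-1}$ modulo $N(m_2)$, so that $u,\tau$ generate the claimed $S_3$ with $u$ a $3$-cycle and $\tau$ a transposition. The structural point is then to locate the index-$3$ subgroup $H := \langle N(m_2),\tau\rangle$; a direct order count ($|G(m_2)| = 972$, $|N(m_2)| = 162$, $|H| = 324$) shows that $H$ is exactly the full Borel subgroup $\tB(9) \subseteq \GL_2(\Z/9\Z)$, that $[G(m_2):H]=3$ and $[H:N(m_2)]=2$, and that the normal core of $H$ in $G(m_2)$ is precisely $N(m_2)$. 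Consequently $X_H = X_0(9)$, its function field is the degree-$3$ subextension of $K(E_t[9])$ fixed by $H$, and the splitting field of the sought cubic (the field fixed by the normal core $N(m_2)$) is the normal closure of $K(X_0(9))$ over $K(X_0(3)) = K(t)$.

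This identifies the cubic modularly: its three roots are attached to the three cyclic subgroups $C_9 \subseteq E_t[9]$ of order $9$ lying over the fixed order-$3$ subgroup $C$ recorded by the point of $X_0(3)$, equivalently to the three points in the degree-$3$ fibre of the natural map $X_0(9) \to X_0(3)$, $(E,C_9) \mapsto (E,C_9[3])$, which $S_3$ permutes faithfully. Thus the cubic is, up to a change of variable, the minimal polynomial of a Hauptmodul of $X_0(9)$ over $K(t)$. I would compute it exactly as in the level-$10$ case: take the classical $j$-map on $X_0(9)$, form the fibre product with $X_0(3)$ over the $j$-line by setting that expression equal to $j(t) = (t+27)(t+243)^3/t^3$, factor the resulting relation between the two Hauptmoduln, and extract the component that is cubic over $K(t)$. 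A suitable linear substitution $x = \lambda(t)\,(\text{Hauptmodul}) + \mu(t)$ then depresses this cubic to the form $x^3 + E(t)x + F(t)$, and reading off the coefficients should yield $E(t) = -3t(t+27)$ and $F(t) = -t(2t+27)(t+27)$.

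As a built-in consistency check I would compute the discriminant of $x^3+E(t)x+F(t)$, namely $-4E^3 - 27F^2 = -3^9\, t^2(t+27)^2$; hence $\sqrt{\mathrm{disc}} \in K(\sqrt{-3})$ and the quadratic resolvent of the cubic is $K(\sqrt{-3})$. This confirms that the Galois group of the cubic is genuinely $S_3$ rather than $C_3$, and that its quadratic subfield is the expected $K(\zeta_3) = K(\sqrt{-3})$ forced by the mod-$3$ (Weil pairing / determinant) data. The main obstacle is not the elimination itself but the bookkeeping of the $S_3$-structure: one must pin down the primitive element and the normalizing substitution so that the three roots are synchronized with the geometric subgroups $C_9$ and with the kernel $N(m_2)$ specifically, and not with a conjugate index-$3$ subgroup or a different $S_3$-quotient. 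Only then will feeding this cubic together with the Weierstrass cubic of $E_t$ into Lemma~\ref{Lemma:DiagonalCubic} produce the correct level-$18$ entanglement curve $X_{G_{18}}$; in particular, the discriminant computed above must match the product-of-discriminants that appears in that gluing step.
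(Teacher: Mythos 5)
Your proposal is correct and follows essentially the same route as the paper: the paper likewise observes that $N(m_2)\subseteq \tB(9)$ (so the splitting field of the cubic is the normal closure of the function field of $X_0(9)$ over $K(t)=K(X_0(3))$), forms the fiber product of $X_0(9)$ (with Hauptmodul $s=\left(\eta_1/\eta_9\right)^3$) and $X_{G(m_2)}$ over the $j$-line, extracts the unique degree-$3$ factor $s^3-ts^2-9ts-27t$ of $j(s)-j(t)$, and depresses it via $s=\tfrac{1}{3}(x+t)$. Your additional bookkeeping — verifying $\langle N(m_2),\tau\rangle=\tB(9)$ with normal core exactly $N(m_2)$, and the discriminant check $-4E^3-27F^2=-3^9\,t^2(t+27)^2$ giving quadratic resolvent $K(\sqrt{-3})$ — is sound, though the synchronization worry you raise is already dispatched by Theorem \ref{Thm:HowManyCurves}, since $[\Aut \Gamma : \Inn \Gamma]=1$ for $\Gamma = S_3$ makes the choice of identification with $S_3$ irrelevant once the kernels are fixed.
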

\begin{proof}
Note that $N(m_2)$, the kernel of the map from $G(m_2)$ to $S_3$, is contained in the Borel group $\tB(9)$. Hence, the corresponding $S_3$ subextension of $\Gal(K(E_t[9])/K)$ will be (generically) generated by a certain degree $3$ factor in the fiber product of $X_{G(m_2)}$ with $X_0(9)$ over the $j$-line. Identifying $X_{G(m_2)}$ with $X_0(3)$, it is the factor whose points correspond in moduli-theoretic terms with triples $(E,C,D)$, where $D$ is cyclic of order $9$ and $C=3D$. In order to determine this factor explicitly, we need an explicit parameter on the genus $0$ modular curve $X_0(9)$, along with an equation for the map to the $j$-line. One choice of parameter is given by the eta product function $s=\left(\eta_1/\eta_9\right)^3$, for which the map is as follows.
$$j(s)=\frac{(s + 9)^3\left(s^3 + 243s^2 + 2187s + 6561\right)^3}{s^9\left(s^2 + 9s + 27\right)}$$
Factoring $j(s)-j(t)$, we find a unique factor of degree $3$ over $K(t)$.
$$s^3 - ts^2 - 9ts - 27t$$
So the roots of this polynomial in $s$ would indeed (generically) generate the desired $S_3$ subextension of $K(E_t[9])$ over $K$. The substitution, $s=\tfrac{1}{3}(x+t)$, yields the equivalent cubic given in the statement of the lemma.
\end{proof}

\begin{Theorem}\label{Thm:Entanglement-Level18}
Let $\cX$ be the modular curve of level $18$ whose $K$-rational points correspond (generically) to elliptic curves $E/K$ satisfying:\\ 
\\
\indent (1) $\Gal(K(E[2])/K)\cong\GL_2(\Z/2\Z)$\\
\indent (2) $\Gal(K(E[9])/K)\cong G(m_2)$ (the full pre-image of $\tB(3)$ in $\GL_2(\Z/9\Z)$)\\
\indent (3) $\Gal(K(E[9])\cap K(E[2])/K)\cong S_3$ (fixed field of $N(m_2)$ as above)\\
\\
Then $\cX$ is a genus $0$ curve with a parameter $u$ (over $\Q$), such that the map to $X_{G(m_2)}$ is given by
\[
t=-27\left(u^3-1\right)^{-2}.
\]
\end{Theorem}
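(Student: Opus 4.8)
The plan is to follow the same computational strategy used for levels $10$ and $15$: reduce everything to a single application of Lemma~\ref{Lemma:DiagonalCubic} to the two cubics over $\Q(t)$ that generically generate the $S_3$ subextensions of $K(E_t[2])$ and $K(E_t[9])$, namely the Weierstrass polynomial $x^3+B(t)x+C(t)$ and the cubic $x^3+E(t)x+F(t)$ of Lemma~\ref{Lemma:S3Extension-Level18}. Since $G(m_1)=\GL_2(\Z/2\Z)$ is full, the full product curve $X=X_{G(m_1),G(m_2)}$ is just $X_{G(m_2)}\cong X_0(3)$ with parameter $t$ and $j$-map as recorded above, so it suffices to exhibit $\cX$ as a degree $6$ cover of this $t$-line and to extract a rational parameter $u$.

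First I would compute the two cubic discriminants. A direct calculation gives $\Delta_2=-3^9\,t^2(t+27)^2$, while for the Weierstrass cubic one uses the identity $(t+27)(t+243)^3-(t^2-486t-19683)^2=1728\,t^3$ to obtain $\Delta_1=2^8\,3^6\,t^3(t+27)^2$. Hence
\[
\delta^2=\Delta_1\Delta_2=-2^8\,3^{15}\,t^5(t+27)^4,
\]
which, exactly as in the level $10$ and $15$ cases, pins down the quadratic subcurve on which the two square-root subfields agree: writing $\delta=2^4\,3^7\,t^2(t+27)^2\,w$ with $w^2=-3t$, the intermediate genus zero curve maps to $X_0(3)$ via $t=-w^2/3$. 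This is consistent with the fact that the quadratic subfields of $K(E_t[2])$ and $K(E_t[9])$ are $K(\sqrt{t})$ and $K(\sqrt{-3})$, which coincide precisely when $-3t$ is a square.

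Next I would apply Lemma~\ref{Lemma:DiagonalCubic}. Both cubics are depressed, so $A=D=0$ and the lemma collapses to $G=0$, $H=-3BE$ and $I=-\tfrac12(27CF+\delta)$; substituting the explicit $B,C,E,F$ and $t=-w^2/3$ produces a cubic $x^3+Hx+I=0$ with coefficients in $\Q(w)$, which is the (singular) model of $\cX$ as a degree $3$ cover of the $w$-line. After clearing common factors by a substitution $x=(\cdots)x_0$ to reach a reduced form $x_0^3+H_0x_0+I_0=0$ over $\Q(w)$, the genus of $X_{G_{18}}$ being zero (known from Proposition~\ref{keypropositionformainthm}) guarantees a rational parametrization, which I would exhibit in the form $w=9/(u^3-1)$ together with an explicit $x_0=r(u)\in\Q(u)$. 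Verifying that this pair satisfies the cubic and that $w(u)$ has degree $3$ shows the map $\mathbb{P}^1_u\to\cX$ is birational; composing with $t=-w^2/3$ then yields $t=-27(u^3-1)^{-2}$, with $u$ evidently defined over $\Q$.

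The mechanical steps — the discriminant computations and the specialization of Lemma~\ref{Lemma:DiagonalCubic} — present no real difficulty. The genuine obstacle is the final step: producing the explicit rational parametrization of the degree $3$ cover, i.e. pinning down $w=9/(u^3-1)$ and the accompanying $x_0=r(u)$. Although genus zero is guaranteed a priori, extracting a clean parameter over $\Q$ requires locating a smooth rational point on the cubic model and projecting from it, which (as in the level $10$ computation) I would carry out with the aid of SageMath; the main subtlety is choosing the simplifying substitution for $x_0$ so that the resulting parametrization, and the induced map to $X_0(3)$, take the compact rational form asserted in the theorem.
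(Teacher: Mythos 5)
Your proposal is correct and follows essentially the same route as the paper's proof: identical discriminant computations ($\Delta_1=2^8\,3^6\,t^3(t+27)^2$, $\Delta_2=-3^9\,t^2(t+27)^2$, hence $\delta^2=-2^8\,3^{15}\,t^5(t+27)^4$), the same quadratic subcurve $y^2=-3t$, the same specialization of Lemma~\ref{Lemma:DiagonalCubic} with a simplifying substitution in $x$, and a computer-assisted rational parametrization composed with $t=-\tfrac{1}{3}y^2$. The only cosmetic difference is the sign (the paper takes $y=-9/(u^3-1)$ where you propose $w=9/(u^3-1)$), which affects only the accompanying $x_0(u)$ to be verified and not the resulting map $t=-27(u^3-1)^{-2}$.
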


\begin{proof}
In order to apply Lemma \ref{Lemma:DiagonalCubic}, we first set $\delta^2$ equal to the product of the discriminants of the cubic polynomial in Lemma \ref{Lemma:S3Extension-Level18} and the Weierstrass polynomial of $E_t$. Then $\delta$ generates the quadratic entanglement curve over $X$.
$$\delta^2=- 2^8 \cdot 3^{15}(t + 27)^4 t^5$$
If we set $\delta=2^4\cdot 3^7(t+27)^2t^2y$, this simplifies to $y^2=-3t$, so that $y$ is clearly a parameter (over $\Q$) for the genus $0$ curve.

Now that we have $\delta$, we are able to apply Lemma  \ref{Lemma:DiagonalCubic} to obtain an initial singular equation for $\cX$ of the form, $x^3+H(y)x+I(y)=0$.
After making the substitution, $x=-\frac{1}{3}(y+9)x_0$, we arrive at the equation, $x_0^3+H_0(y)x_0+I_0(y)=0$, where $H_0(y)$ and $I_0(y)$ are as follows.
\begin{align*}
H_0(y)&=-3 y^2 (y - 27) (y + 27) (y - 9)^2\\
I_0(y)&=-y^2 (y - 9)^2 (2y^5 - 18y^4 + 2997y^3 - 32805y^2 - 177147y + 1594323)
\end{align*}
It is easy to check that a birational isomorphism over $\Q$ from $\mbp^1$ to this singular curve is given by the following equations.
$$y=\frac{-9}{u^3-1}\qquad x_0=\frac{729u^2\left(3u^5 - 3u^3 - 4u^2 + 2\right)}{\left(u^3-1\right)^3}$$
Composing with $t=-\tfrac{1}{3}y^2$ results in the formula for the forgetful map from $\cX$ to $X_{G(m_2)}$ that is given in the statement of the theorem.
\end{proof}

\bigskip


\section{An infinite family of $D_6$-entanglements} \label{infiniteD6family}

In this section, we exhibit an infinite family of $D_6$ entanglements, which in particular demonstrates that, for fixed $G_0 \in \mc{G}_{\nonab}(0)$, the set
\begin{equation} \label{setofGL2levels}
\{ m_{\GL_2}(G) : G \in \mc{G}_{\nonab}(0), \; G \doteq_{\SL_2} G_0 \}
\end{equation}
is in general unbounded.  First, let $G_3 \subseteq \GL_2(\hat{\mbz})$ be defined by
\[
G_3 := \left\{ g \in \GL_2(\hat{\mbz}) : \pi_3(g) \in \left\{ \begin{pmatrix} * & * \\ 0 & * \end{pmatrix} \right\} \right\}, 
\]
where, here and in what follows, we are denoting by $\pi_m : \GL_2(\hat{\mbz}) \rightarrow \GL_2(\mbz/m\mbz)$ the canonical projection map.
Next, fix an arbitrary fundamental discriminant $D \in \mbz$ and define
\[
\chi_{D} : \GL_2(\hat{\mbz}) \longrightarrow \{ \pm 1 \}, \quad\quad \chi_{D}(g) := \left( \frac{D}{\det g} \right).
\]
We fix isomorphisms 
\begin{equation} \label{keylevel3D6isomorphisms}
\pi_3(G_3) \simeq S_3 \times \{ \pm 1 \}, \quad \pi_2(G_3) \simeq S_3, 
\end{equation}
and define the fibering maps $\psi_3$ and $\psi_{D}$ by
\begin{equation*} 
\begin{tikzcd}
\psi_3 : G_3 \rar{\pi_3} & \pi_3(G_3) \rar{\simeq} & S_3 \times \{ \pm 1 \} \\
\psi_{D} : G_3 \rar{\pi_2 \times \chi_{D}} & \pi_2(G_3) \times \{ \pm 1 \} \rar{\simeq} & S_3 \times \{ \pm 1 \};
\end{tikzcd}
\end{equation*}
we note that $\psi_{D}$ is surjective, provided $D \neq 1$.  Finally, we define the open subgroup $G_{6,D} \subseteq \GL_2(\hat{\mbz})$ by
\[
G_{6,D} := \left\{ g \in G_3 : \; \psi_3(g) = \psi_{D}(g) \right\}.
\]
It is straightforward to see that, under \eqref{keylevel3D6isomorphisms}, we have
\[
\begin{split}
\psi_3\left( G_3 \cap \SL_2(\hat{\mbz}) \right) &= A_3 \times \{ \pm 1 \}, \\
\psi_{D}\left( G_3 \cap \SL_2(\hat{\mbz}) \right) &= S_3 \times \{ 1 \},
\end{split}
\]
and it follows from this that
\[
G_{6,D} \cap \SL_2(\hat{\mbz}) = \psi_3 \vert_{\SL_2(\hat{\mbz})}^{-1}\left( A_3 \times \{ 1 \} \right) \cap  \psi_{D}\vert_{\SL_2(\hat{\mbz})}^{-1}\left( A_3 \times \{ 1 \} \right).
\]
Thus, the groups $G_{6,D}$ all have $\SL_2$-level $6$.  Since the $\GL_2$-level of $G_{6,D}$ is $\lcm(6, |D|)$, this example demonstrates that the set \eqref{setofGL2levels} is indeed unbounded.  Furthermore, we note that $-I \notin G_{6,D}$, and that the group $\tilde{G}_{6,D}$ has level $6$.  Since this group does not depend on $D$, let us denote it by $\tilde{G}_6$.

Under what conditions do we have $\rho_E(G_K) \, \dot\subseteq\,  G_{6,D}$?  Define the map
\[
\eta : \pi_3(G_3) \longrightarrow \{ \pm 1 \}, \quad\quad \eta\left( \begin{pmatrix} a & b \\ 0 & d \end{pmatrix} \right) = a \in (\mbz/3\mbz)^\times \simeq \{ \pm 1 \}.
\]
Assume for simplicity that 
\begin{equation} \label{convenienthypothesis}
\mu_3 \not\subseteq K, \quad\quad \sqrt{D} \notin K(\mu_3).
\end{equation}  
Then, for an appropriate choice of the isomorphism $\pi_3(G) \simeq S_3 \times \{ \pm 1 \}$ in \eqref{keylevel3D6isomorphisms}, we have that for any  elliptic curve $E$ over $K$, $\rho_E(G_K) \, \dot\subseteq \, G_{6,D}$ if and only if $E$ admits a $K$-rational isogeny of degree $3$ and also satisfies the three conditions
\[
K(\mu_3) \subseteq K(E[2]), \quad\quad K(E[3]) = K(E[2],\sqrt{D}), \quad\quad K(E[3])^{\ker \eta} = K(\sqrt{D}).
\]
In particular, setting
$
m_D := \lcm(2,|D|),
$
we have that, under the hypothesis \eqref{convenienthypothesis},
elliptic curves $E / K$ with $\rho_E(G_K) \, \dot\subseteq \, G_{6,D}$ have the entanglement $K(E[3]) \subseteq K(E[m_D])$.
Furthermore, since generically we have $\gal(K(E[3])/K) \simeq \pi_3(G_3) \simeq S_3 \times \{ \pm 1 \} \simeq D_6$, this is an example of a $D_6$-entanglement.  

For each fundamental discriminant $D$, there is an elliptic curve $\mc{E}_{D}$ over $\mbq(t)$ satisfying $\rho_{\mc{E}}(G_{\mbq(t)}) \doteq G_{6,D}$.  To describe it, we first define
\[
\begin{split}
A(t) &:= -3t^9(t^3-2)(t^3+2)^3(t^3+4), \\
B(t) &:= -2t^{12}(t^3+2)^4(t^4-2t^3+4t-2)(t^8+2t^7+4t^6+8t^5+10t^4+8t^3+16t^2+8t+4),
\end{split}
\]
and then set
\begin{equation} \label{modelforXsubGtildesub6D}
\mc{E}_{D} : \; y^2 = x^3 + D^2A(t) x + D^3B(t).
\end{equation}
The discriminant $\gD_{\mc{E}_D}(t)$ and $j$-invariant $j_{\mc{E}_D}(t)$ are given by
\begin{equation*} 
\begin{split}
\gD_{\mc{E}_D}(t) &= 2^{12}3^3D^6t^{24}(t+1)^6(t^2-t+1)^6(t^3+2)^8, \\
j_{\mc{E}_D}(t) &= \frac{-27t^3(t^3 - 2)^3(t^3 + 2)(t^3 + 4)^3}{(t+1)^6(t^2-t+1)^6}.
\end{split}
\end{equation*}
By \cite[Theorem 1.6]{danielslozanorobledo}, the elliptic curve $\mc{E}_1 / \mbq(t)$ has the property that $\rho_{\mc{E}_1,6}(G_{\mbq(t)})$ belongs to one of the two index two subgroups of the level 6 group $\tilde{G}_{6}$ corresponding to elliptic curves $E$ over $\mbq$ satisfying $\mbq(E[2]) = \mbq(E[3])$; its twist $\mc{E}_{-3}$ by $\mbq(\sqrt{-3})$ has mod $6$ image belonging to the other such index two subgroup.  Given this, it is straightforward to verify (e.g. by explicitly computing a Galois-stable cyclic subgroup $\mc{C} \subseteq \mc{E}_D[3]$) that $\mc{E}_{D}$ admits a $\mbq(t)$-rational isogeny of degree three and that the three conditions
\[
\mbq(t)\left( \mu_3 \right) \subseteq \mbq(t)\left( \mc{E}_{D}[2] \right), \quad\quad \mbq(t)\left(\mc{E}_{D}[3]\right) = \mbq(t) \left( \mc{E}_{D}[2], \sqrt{D} \right), \quad\quad \mbq(t)\left(\mc{E}_{D}[3]\right)^{\ker \eta} = \mbq(t) \left( \sqrt{D} \right)
\]
hold.  Thus $\rho_{\mc{E}_{D}}(G_{\mbq(t)}) \, \dot\subseteq \, G_{6,D}$, and by examining specializations, we may see that in fact $\rho_{\mc{E}_{D}}(G_{\mbq(t)}) \doteq G_{6,D}$.
\begin{remark}
A curious feature of the underlying group $\tilde{G}_6$ in the above example is that, given any elliptic curve $E$ over $\mbq$ for which $\rho_E(G_\mbq) \subseteq \tilde{G}_6$, we have $-I \notin \rho_E(G_\mbq)$, in spite of the fact that $-I \in \tilde{G}_6$. The reason for this is as follows: a computation shows that
\[
-I \notin \left[ \tilde{G}_6(6), \tilde{G}_6(6) \right].
\]
In the language of Section \ref{Section:Applications}, this implies that there are no commutator-thick subgroups of $\tilde{G}_6$ that contain $-I$. In particular, since $\rho_E(G_\mbq)$ is commutator-thick (see \eqref{whyrhosubEofGQiscommutatorthick}), we conclude that $-I \notin \rho_E(G_\mbq)$.  By the same reasoning, the same conclusion holds for the group $G$ appearing in Theorem \ref{applicationthm}.
\end{remark}
\bigskip


\end{document}